\documentclass[a4paper,11pt]{article}    
      
\usepackage{multirow}
\usepackage{mathtools}
\usepackage{subcaption}
\usepackage{amsfonts,amssymb,amsmath,amsthm,latexsym,bbm}
\usepackage{enumerate}
\usepackage{epsf,epsfig}
\usepackage{xcolor,colortbl,color}
\usepackage{graphicx,graphics}
\usepackage{caption}
\usepackage[utf8]{inputenc}
\usepackage{tikz}
\usepackage{stmaryrd }
\usepackage{epstopdf}
\usepackage[english]{babel}
\selectlanguage{english}


\topmargin -0.5in
\textheight 9in
\oddsidemargin 0.15in
\evensidemargin 0.25in
\textwidth 6.15in
\parskip=3pt plus 1pt minus 1pt


\makeatletter \@addtoreset{equation}{section} \makeatother
\makeatletter \@addtoreset{enunciato}{section} \makeatother
\newcounter{enunciato}[section]

\newtheorem{ittheorem}{Theorem}
\newtheorem{itlemma}{Lemma}
\newtheorem{itproposition}{Proposition}
\newtheorem{itdefinition}{Definition}
\newtheorem{itremark}{Remark}
\newtheorem{itclaim}{Claim}
\newtheorem{itfact}{Fact}
\newtheorem{itconjecture}{Conjecture}
\newtheorem{itcorollary}{Corollary}

\newenvironment{theorem}{\addtocounter{enunciato}{1}
\begin{ittheorem}}{\end{ittheorem}}
\newenvironment{lemma}{\addtocounter{enunciato}{1}
\begin{itlemma}}{\end{itlemma}}
\newenvironment{proposition}{\addtocounter{enunciato}{1}
\begin{itproposition}}{\end{itproposition}}
\newenvironment{definition}{\addtocounter{enunciato}{1}
\begin{itdefinition}}{\end{itdefinition}}
\newenvironment{remark}{\addtocounter{enunciato}{1}
\begin{itremark}}{\end{itremark}}

\newenvironment{conjecture}{\addtocounter{enunciato}{1}
\begin{itconjecture}}{\end{itconjecture}}
\newenvironment{corollary}{\addtocounter{enunciato}{1}
\begin{itcorollary}}{\end{itcorollary}}

\newcommand{\be}[1]{\begin{equation}\label{#1}}
\newcommand{\ee}{\end{equation}}
\newcommand{\bl}[1]{\begin{lemma}\label{#1}}
\newcommand{\el}{\end{lemma}}
\newcommand{\br}[1]{\begin{remark}\label{#1}}
\newcommand{\er}{\end{remark}}
\newcommand{\bt}[1]{\begin{theorem}\label{#1}}
\newcommand{\et}{\end{theorem}}
\newcommand{\bd}[1]{\begin{definition}\label{#1}}
\newcommand{\ed}{\end{definition}}
\newcommand{\bp}[1]{\begin{proposition}\label{#1}}
\newcommand{\ep}{\end{proposition}}
\newcommand{\bc}[1]{\begin{corollary}\label{#1}}

\newcommand{\bcj}[1]{\begin{conjecture}\label{#1}}
\newcommand{\ecj}{\end{conjecture}}

\def\<{\langle}
\def\>{\rangle}
\def\chv#1{\{\,#1\,\}}

\def\abs#1{\left\vert #1 \right\vert} 
\def\prt#1{\left( #1 \right)} 
\def\crt#1{\left[ #1 \right]} 
\def\crl#1{\left\{ #1 \right\}} 
\def\Ind#1{\mathbbm{1}_{#1}}  
\def\teto#1{\left\lceil #1 \right\rceil} 
\def\piso#1{\left\lfloor #1 \right\rfloor} 

\def\eatspace#1{\relax}
\def\unskipit{\expandafter\eatspace}


\newcommand{\gga}{\gamma}

\newcommand{\gd}{\delta}
\newcommand{\gD}{\Delta}
\newcommand{\gep}{\varepsilon} 

\newcommand{\gt}{\theta}

\newcommand{\gl}{\lambda}

\newcommand{\gs}{\sigma}

\newcommand{\go}{\omega}
\newcommand{\gO}{\Omega}
\newcommand{\gi}{\iota}


\newcommand{\mc}[1]{{\mathcal #1}}

\newcommand{\bb}[1]{{\mathbb #1}}

\newcommand{\R}{\ensuremath{\mathbb{R}}}

\newcommand{\Z}{\ensuremath{\mathbb{Z}}}
\newcommand{\N}{\ensuremath{\mathbb{N}}}
\newcommand{\PP}{\ensuremath{\mathbb{P}}}

\newcommand{\EE}{\ensuremath{\mathbb{E}}}
\newcommand{\dd}{\ensuremath{\mathrm{d}}}

\def\crt#1{\left[ #1\right]} 
\def\prt#1{\left( #1\right)} 
\def\chv#1{\{\,#1\,\}} 
\def\Ind#1{ \mathbbm{1}_{#1}} 
\def\abs#1{\left\vert #1\right\vert}


\begin{document}    


\title{Random walk in cooling random environment:\\
ergodic limits and concentration inequalities}

\author{\renewcommand{\thefootnote}{\arabic{footnote}}
Luca Avena,\,\,
Yuki Chino,\,\,
Conrado da Costa,\,\,
Frank den Hollander\,\footnotemark[1]}

\date{\today}

\footnotetext[1]{Mathematical Institute, Leiden University, 
P.O.\ Box 9512, 2300 RA Leiden, The Netherlands}


\maketitle


\begin{abstract}
In previous work by Avena and den Hollander~\cite{AdH17}, a model of a one-dimensional 
random walk in a dynamic random environment was proposed where the random environment 
is resampled from a given law along a growing sequence of deterministic times. In the regime 
where the increments of the resampling times diverge, which is referred to as the cooling 
regime, a weak law of large numbers and certain fluctuation properties were derived under 
the annealed measure. In the present paper we show that a strong law of large numbers 
and a quenched large deviation principle hold as well. In the cooling regime, the random walk 
can be represented as a sum of independent variables, distributed as the increments of a
random walk in a static random environment over increasing periods of time. Our proofs require 
suitable multi-layer decompositions of sums of random variables controlled by moments bounds 
and concentration estimates. Along the way we derive two results of independent interest, namely, 
a concentration inequality for the cumulants of the displacement in the static random environment 
and an ergodic theorem that deals with limits of sums of triangular arrays representing the 
structure of the cooling regime. We close by discussing our present understanding of homogenisation 
effects as a function of the speed of divergence of the increments of the resampling times.  

\medskip\noindent {\it MSC 2010:}
60F05, 
60F10, 
60G50, 
60K37. 
\\
{\it Keywords:} Random walk, dynamic random environment, resampling times, 
law of large numbers, large deviation principle, concentration inequalities.
\\
{\it Acknowledgment:} The research in this paper was supported through NWO 
Gravitation Grant NETWORKS-024.002.003. 
\end{abstract}

\newpage


\section{Introduction, main results and discussion} 
\label{Intro} 

Random walk in random environment is a model for a particle moving in an inhomogeneous 
potential. When the random environment is \emph{static}  this model exhibits striking features. 
Namely, there are regions where the random walk remains \emph{trapped} for a long time. 
The presence of these traps leads to a local slow-down of the random walk in comparison to 
a homogeneous random walk, and may result in anomalous scaling, especially in low dimensions.  
At present, these slow-down phenomena have been fully understood only in dimension one 
(see Zeitouni~\cite{Z04}, and references therein). 

The situation where the random environment is \emph{dynamic}  has seen major progress 
in the last ten years. While the random environment evolves over time, it remains inhomogeneous but 
dissolves existing traps and creates new traps. Depending on the choice of the dynamics, the random 
walk behaviour can either be similar to that in the static model or be similar to that in the homogeneous 
model. Up to now, most dynamic models require strong space-time mixing conditions, guaranteeing 
negligible trapping effects and resulting in scaling properties similar to those of a homogeneous random 
walk (see Avena, Blondel and Faggionato~\cite{ABF17}, and references therein).

In Avena and den Hollander~\cite{AdH17}, a new random walk model was introduced, called 
\emph{Random Walk in Cooling Random Environment} (RWCRE). This has a dynamic random 
environment, but differs from other dynamic models in that it allows for an explicit control of the 
time mixing in the environment. Namely, at time zero an i.i.d.\ random environment is generated, 
and this is \emph{fully resampled} along an increasing sequence of deterministic times. If the 
resampling times increase rapidly enough, then we expect to see a behaviour close to that of 
the static model. Conversely, if the resampling times increase slowly enough, then we expect to 
see a behaviour that is close to the homogeneous model. Thus, RWCRE allows for different 
scenarios as a function of the speed of growth of the resampling times. The name ``cooling" is 
used because the static model is sometimes called ``frozen". 

In order to advance our understanding of RWCRE, we need to acquire detailed knowledge of 
fluctuations and large deviations for the classical one-dimensional Random Walk in Random 
Environment (RWRE). Some of this knowledge is available from the literature, but other parts 
are not and need to be developed along the way. A few preliminary results were proved in
Avena and den Hollander~\cite{AdH17} under the \emph{annealed} law. In the simplest scenario 
where the increments of the resampling times stay bounded, which is referred to as the 
\emph{no-cooling regime}, full homogenisation takes place, and both a classical Strong Law 
of Large Numbers (SLLN) and a classical Central Limit Theorem (CLT) hold. Moreover, it was 
shown that as soon as the increments of the resampling times diverge, which is referred to as 
the \emph{cooling regime}, a Weak Law of Large Numbers (WLLN) holds with an asymptotic 
speed that is the \emph{same} as for the corresponding RWRE~\cite[Theorem 1.5]{AdH17}. 
As far as fluctuations are concerned, for the case where the RWRE is in the so-called Sinai 
regime (recurrent, subdiffusive, non-standard limit law; see Sinai~\cite{S82}, Kesten \cite{K86}), 
it was shown that RWCRE exhibits Gaussian fluctuations with a scaling that \emph{depends} on 
the speed of divergence of the increments of the resampling times~\cite[Theorem 1.6]{AdH17}.
The proof of this fact requires that the convergence to the limit law for the corresponding RWRE 
is in $L^p$ for some $p>2$. In~\cite[Appendix C]{AdH17} it was shown that the convergence is 
in $L^p$ for all $p>0$.

In the present paper we pursue a more refined investigation of RWCRE. We focus on the cooling 
regime and aim for a deeper understanding of homogenisation effects. In particular, we derive a 
SLLN and a \emph{quenched} Large Deviation Principle (LDP), with a limiting speed and a rate 
function that are the \emph{same} as for the corresponding RWRE (Theorems~\ref{SLLN} 
and~\ref{thm:LDP} below). Both results are not unexpected, but at the same time are far from 
obvious. As we will see, they lead to some subtle surprises, which we discuss below. A crucial 
ingredient in both proofs is a general limit property we call \emph{cooling ergodic theorem}, 
which is needed to control certain variables representing the structure of the cooling regime 
(Theorem~\ref{CEThm} below). This theorem not only is a key tool in our proofs, it will also be 
useful to address other questions not investigated here. To prove the SLLN and the LDP we 
also need certain \emph{concentration inequalities} for the corresponding RWRE 
(Theorem~\ref{P:MB} below).

\medskip\noindent
{\bf Outline.}
In Section~\ref{RWRE} we define one-dimensional RWRE and recall some basic facts that are used 
throughout the paper. In Section~\ref{model} we define RWCRE. In Section~\ref{results} we state our 
four main theorems and provide some insight into their proofs. In Section~\ref{discussion} we discuss 
what is known about RWCRE, explain how the results derived so far relate to each other, and state 
a number of open problems. The remainder of the paper is devoted to the proofs: Section~\ref{CETp}
for the cooling ergodic theorem of RWCRE, Section~\ref{Cumu} for the concentration inequalities of 
RWRE, and Section~\ref{SLLDP} for the SLLN and the LDP of RWCRE.


\subsection{RWRE: some basic facts} 
\label{RWRE}

Throughout the paper we use the notation $\N_0 = \N \cup \{0\}$ with $\N = \{1,2,\dots\}$. The classical 
one-dimensional static model is defined as follows. Let $\omega=\{\omega(x)\colon\,x\in\Z\}$ be an 
i.i.d.\ sequence with probability distribution
\begin{equation} 
\label{alpha}
\mu = \alpha^{\Z}
\end{equation} 
for some probability distribution $\alpha$ on $(0,1)$.  We write $\langle\cdot\rangle$ to denote the 
expectation w.r.t.\ $\alpha$.

\begin{definition}[{\bf RWRE}]\label{RWREdef} 
{\rm Let  $\omega$ be an environment sampled from $\mu$. We call \emph{Random Walk in Random 
Environment} the Markov chain $Z = (Z_n)_{n\in\N_0}$ with state space $\Z$ and transition probabilities} 
\begin{equation} 
\label{Ker_om}
P^{\omega}(Z_{n+1} = x + e \mid Z_n = x) 
= \left\{
\begin{array}{ll}
\omega(x) &\mbox{ if } e = 1,\\  
1 - \omega(x) &\mbox{ if } e = - 1,
\end{array}
\right. 
\qquad n \in \N_0.
\end{equation}
{\rm We denote by} $P_x^{\omega}(\cdot)$ {\rm the \emph{quenched} law of the Markov chain identified by the 
transitions in~\eqref{Ker_om} starting from $x\in\Z$, and by \begin{equation*} P_{x}^{\mu}(\cdot) 
= \int P_x^{\omega}(\cdot)\,\mu(\dd\omega),
\end{equation*}
the corresponding \emph{annealed} law.}
\end{definition}

The understanding of one-dimensional RWRE is well developed, both under the quenched and the 
annealed law. For a general overview, we refer the reader to the lecture notes by Zeitouni~\cite{Z04}.
Here we collect some basic facts and definitions that will be needed throughout the paper. 

The asymptotic properties of RWRE are controlled by the distribution of the ratio of the transition 
probabilities to the left and to the right at the origin, i.e.,
\begin{equation} 
\label{rhodef}
\rho = \frac{1 - \omega(0)}{\omega(0)}.
\end{equation}
We will impose that the support of $\alpha$ is contained in an interval of the form $[\mathfrak{c},1-\mathfrak{c}]$
for some $\mathfrak{c}>0$. This corresponds to a \emph{uniform ellipticity} condition on $\mu$, meaning that
\begin{equation}
\label{UEllip}
\mu\big(\omega \colon\,0< \mathfrak{c}\leq\omega(x)\leq 1-\mathfrak{c}<1,\,\forall\,x\in\Z\big) = 1.
\end{equation}
Let $\rho_\text{max}$ and $\rho_{\text{min}}$ denote the maximum and minimum of $\rho$ over the support of 
$\alpha$.  We will further impose that
\begin{equation} 
\label{RWconfused}
\rho_{\text{min}} < 1 < \rho_\text{max}.
\end{equation}
The  inequalities in~\eqref{RWconfused} ensures that we are in the \emph{nested} situation, i.e., at some 
sites the random walk prefers to go to the right while at other sites it prefers to go to the left.

\begin{definition}[{\bf Basic environment distribution}] 
\label{D:mu}{\rm
We call a probability distribution $\mu$ on $(0,1)^\Z$  \emph{basic} (= i.i.d., uniform elliptic, nested) if 
\eqref{alpha},~\eqref{UEllip} and~\eqref{RWconfused} hold.}
\end{definition} 

 The following proposition due to Solomon~\cite{S75} characterises recurrence versus transience and 
asymptotic speed. To state the result in a simple form we may assume without loss of generality that
\begin{equation} 
\label{+0R}
\langle \log \rho \rangle \leq 0.
\end{equation}
The case where $\langle \log \rho \rangle > 0$ follows by a reflection argument. Indeed, define 
$\widetilde{\omega}$ by $\widetilde{\omega}(x) = 1-\omega(x)$,  $x \in \Z$. From~\eqref{Ker_om} 
we see that $P_0^\omega(-Z_n \in \cdot) = P_0^{\widetilde{\omega}}(Z_n \in \cdot)$. Therefore, 
statements for the left of the origin can be obtained from statements for the right of the origin in 
the reflected environment and so~\eqref{+0R} is assumed for convenience.

\begin{proposition}[{\bf Recurrence, transience and speed of RWRE}~\cite{S75}]
\label{prop:LLN}$\text{}$\\ 
Suppose that $\mu$ is basic and that~\eqref{+0R} holds. Then:
\begin{itemize} 
\item 
$Z$ is recurrent when $\langle \log \rho \rangle = 0$.
\item 
$Z$ is transient to the right when $\langle \log \rho \rangle <0$.
\item 
For $\mu \text{-a.e.} \,\omega$, $P_0^\omega $\text{-a.s.}, 
\begin{equation}\label{speed}
\lim_{n\to\infty} \frac{Z_n}{n} = v_\mu  =  \left\{\begin{array}{ll}
0, &\mbox{ if }  \langle\rho\rangle \geq 1,\\
\frac{1-\langle\rho\rangle}{1+\langle\rho\rangle} > 0 
, &\mbox{ if } \langle\rho\rangle < 1.
\end{array}
\right.
\end{equation}
\end{itemize}
\end{proposition}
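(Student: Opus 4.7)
The plan is to follow Solomon's classical strategy in two stages: first establish the recurrence-transience dichotomy via the potential theory of the birth--death chain $Z$, and then derive the SLLN by applying the ergodic theorem to the increments of the hitting times of $Z$. Throughout, set $\rho(i) = (1 - \omega(i))/\omega(i)$ and $R(x) = \prod_{i=0}^{x-1}\rho(i)$ for $x \geq 1$, with $R(0) = 1$ and the symmetric definition for $x < 0$. The function $V(x) = \sum_{k=0}^{x-1} R(k)$ is harmonic for $P^\omega$, so solving the Dirichlet problem on $[-M,N]$ yields an explicit formula for the exit probability $P_0^\omega(T_N < T_{-M})$ in terms of $V$. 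Letting $M, N \to \infty$ produces the standard dichotomy: $Z$ is recurrent iff both $\sum_{k \geq 0} R(k) = \infty$ and $\sum_{k \leq -1} R(k)^{-1} = \infty$, and transient to $+\infty$ iff the first of these sums is finite. By the SLLN applied to $\log\rho$ one has $\tfrac{1}{n}\log R(n) \to \langle \log\rho\rangle$ almost surely, which under uniform ellipticity forces the sharp dichotomy asserted in the first two bullets.

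For the asymptotic speed in the transient regime, I would introduce the hitting times $T_n = \inf\{k \in \N_0 \colon Z_k = n\}$ and the increments $\tau_i = T_{i+1} - T_i$. A one-step analysis at level $i$ combined with the strong Markov property gives the recursion
\begin{equation*}
E_i^\omega[\tau_i] = \frac{1}{\omega(i)} + \rho(i)\, E_{i-1}^\omega[\tau_{i-1}],
\end{equation*}
and since $\tau_i$ is a measurable functional of the shifted environment $\theta^i\omega$, the sequence $(\tau_i)_{i \in \N_0}$ is stationary and ergodic under $P^\mu$. Birkhoff's ergodic theorem applied to $T_n/n = n^{-1}\sum_{i=0}^{n-1} \tau_i$ yields $T_n/n \to \langle E^\omega[\tau_0]\rangle$ almost surely. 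Independence of the $\omega(i)$ collapses the iterated recursion to
\begin{equation*}
\langle E^\omega[\tau_0]\rangle = \frac{\langle 1/\omega\rangle}{1 - \langle\rho\rangle} = \frac{1 + \langle\rho\rangle}{1 - \langle\rho\rangle}
\quad \text{when } \langle\rho\rangle < 1,
\end{equation*}
using the identity $1/\omega = 1 + \rho$. The sandwich $T_{Z_n} \leq n < T_{Z_n+1}$ then transfers the limit from $T_n/n$ to $Z_n/n$, giving $v_\mu = (1-\langle\rho\rangle)/(1+\langle\rho\rangle)$.

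The main obstacle is the degenerate case $\langle\rho\rangle \geq 1$, which by Jensen's inequality includes both the recurrent regime $\langle\log\rho\rangle = 0$ and the transient regime $\langle\log\rho\rangle < 0$ with $\langle\rho\rangle \geq 1$; in both, $\langle E^\omega[\tau_0]\rangle = \infty$, so Birkhoff does not directly give a finite limit. I would handle this by truncation: apply Birkhoff to $\tau_i \wedge M$ to obtain $\liminf_n T_n/n \geq \langle E^\omega[\tau_0 \wedge M]\rangle$ and let $M \to \infty$ by monotone convergence to conclude $T_n/n \to \infty$, which passes through the sandwich to give $Z_n/n \to 0$. In the recurrent case this argument must be applied on each side of the origin, using the reflection $\widetilde{\omega}(x) = 1 - \omega(x)$ for the left side, to control both $\max_{k \leq n} Z_k / n$ and $\min_{k \leq n} Z_k / n$, thereby yielding $Z_n/n \to 0 = v_\mu$ almost surely.
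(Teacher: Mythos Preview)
The paper does not prove this proposition at all: it is stated in Section~1.1 as a background fact with a citation to Solomon~\cite{S75}, and no argument is given. Your sketch is a faithful outline of Solomon's original proof---harmonic function/exit-probability analysis for the dichotomy, followed by the ergodic theorem applied to the stationary hitting-time increments $\tau_i$ for the speed---and is essentially correct as written.
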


\noindent 
The above proposition shows that the speed of RWRE is a deterministic function of $\mu$ 
(or of $\alpha$; recall~\eqref{alpha}). Note that for $\alpha$ such that $\langle \log \rho \rangle < 0$ 
and $\langle\rho\rangle \geq 1$, the random walk is transient to the right with zero speed. In this 
regime $Z$ diverges, but only sublinearly due to the presence of \emph{traps}, i.e., local regions 
of the environment pushing the random walk against its global drift.

Similar trapping effects give rise to other anomalous behaviour for fluctuations and large deviations.
In order to state the latter, we recall that a family of probability measures $(P_n)_{n\in\N}$ defined 
on the Borel sigma-algebra of a topological space $(\mathcal{S},\mc{T})$ is said to satisfy the LDP 
with rate $n$ and with rate function  $I\colon\,\mathcal{S} \to [0,\infty]$ when
\begin{equation}
\label{LDbds}
\begin{aligned}
\liminf_{n\to\infty} \frac{1}{n} \log P_n(\mathcal{O}) \geq - \inf_{x \in \mathcal{O}} I(x)
&\qquad \forall \;\mathcal{O} \subset \mathcal{S} \text { open},\\ 
\limsup_{n\to\infty} \frac{1}{n} \log P_n(\mathcal{C}) \leq - \inf_{x \in \mathcal{C}} I(x)
&\qquad \forall \;\mathcal{C} \subset \mathcal{S} \text { closed},
\end{aligned}
\end{equation}
$I$ has compact level sets and $I \not\equiv \infty$ (see e.g.\ den Hollander~\cite[Chapter III]{dH00}).
The following proposition due to Greven and den Hollander~\cite{GdH94} identifies the LDP for 
the empirical speed under the \emph{quenched} law. 

\begin{proposition}[{\bf Quenched LDP for RWRE displacements}~\cite{GdH94}]
\label{staticLDPque}
Suppose that $\mu$ is basic. Then, for  $\mu$-a.e.\ $\omega$, $(Z_n/n)_{n\in\N}$ under $P^\go_0$ 
satisfies the LDP on $\R$ with rate $n$ and with a convex and deterministic rate function $\mc{I}
=\mc{I}_\mu$.
\end{proposition}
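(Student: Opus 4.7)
The proposal is to follow the hitting-time strategy of Greven and den Hollander~\cite{GdH94}, obtaining the LDP first for hitting times and then transferring it by inversion to the empirical speed.

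\textbf{Step 1: Reduction to hitting times.} Set $T_n = \inf\{k \in \N_0 : Z_k = n\}$ for $n \geq 0$ and, symmetrically, $T_{-n}$ via the reflected walk as in the argument that reduced~\eqref{+0R} to the general case. Under~\eqref{+0R} the walk is transient to the right or recurrent, so it suffices to prove a quenched LDP for $(T_n/n)_{n \in \N}$ on $[0,\infty)$, because the event $\{Z_n/n \geq a\}$ for $a \in (0,1]$ coincides with $\{T_{\lceil an \rceil} \leq n\}$, and analogous identities hold for negative speeds using hitting times in the reflected environment. A standard contraction based on these identities converts any LDP for hitting times into an LDP for $Z_n/n$; the resulting rate function is automatically convex in $v$ because the rate function for $T_n/n$ will be convex in $t = 1/|v|$.

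\textbf{Step 2: Quenched log-Laplace of $T_n$ as an ergodic sum.} Decompose $T_n = \sum_{i=0}^{n-1} \tau_i$, where $\tau_i$ is the (quenched) first-passage time from $i$ to $i+1$. Under $P_0^\omega$ these are independent, and $\tau_i$ has the same law as $\tau_0$ under the shifted environment $\theta^i \omega$. Therefore, for $\lambda$ in the domain of convergence,
\begin{equation*}
\Lambda_n^\omega(\lambda) \;=\; \frac{1}{n}\log E_0^\omega\!\bigl[e^{\lambda T_n}\bigr]
\;=\; \frac{1}{n}\sum_{i=0}^{n-1} \log E_0^{\theta^i\omega}\!\bigl[e^{\lambda \tau_0}\bigr].
\end{equation*}
Using first-step analysis, $u^\omega(\lambda) := E_0^\omega[e^{\lambda \tau_0}]$ satisfies a recursion that can be solved as a convergent series in $\rho$ over the environment to the left of $0$, and one checks using uniform ellipticity~\eqref{UEllip} that $u^\omega(\lambda)$ is finite and integrable against $\mu$ for $\lambda$ below an explicit critical $\lambda_c > 0$. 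Birkhoff's ergodic theorem applied to the i.i.d.\ shift-ergodic $\mu$ then gives the deterministic limit
\begin{equation*}
\Lambda(\lambda) \;=\; \lim_{n\to\infty} \Lambda_n^\omega(\lambda)
\;=\; \bigl\langle \log u^\omega(\lambda)\bigr\rangle,
\qquad \mu\text{-a.s.}
\end{equation*}
on a dense subset of the domain, and convexity plus monotone-convergence arguments extend this to the full interior of the domain on a single full-$\mu$-measure set.

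\textbf{Step 3: Gärtner--Ellis and lower bound.} With $\Lambda$ deterministic, convex and finite on a neighbourhood of the origin, the Gärtner--Ellis theorem~\cite[Ch.~V]{dH00} yields, for $\mu$-a.e.\ $\omega$, the upper bound of an LDP for $(T_n/n)$ with rate function $\Lambda^\ast(t) = \sup_\lambda\{\lambda t - \Lambda(\lambda)\}$. For the matching lower bound at exposed points one uses the differentiability of $\Lambda$ inside its domain, which follows from the explicit form of $u^\omega(\lambda)$ and dominated convergence. To cover boundary values (in particular the zero-speed regime where $\langle \rho \rangle \geq 1$), a tilted-measure argument combined with a direct environment-restriction lower bound (constructing a deterministic block of favourable $\omega$'s along the path, whose density exists $\mu$-a.s.\ by the ergodic theorem) closes the gap, and this is where the argument needs the most care.

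\textbf{Step 4: Inversion to $Z_n/n$ and main obstacle.} Finally, translate the LDP for $T_n/n$ on $[0,\infty)$ into an LDP for $Z_n/n$ on $\R$ via the inversion $\mc{I}_\mu(v) = |v|\,\Lambda^\ast(1/|v|)$ for $v\neq 0$ and a separate treatment of $v=0$ using recurrence/transience from Proposition~\ref{prop:LLN}. Convexity of $\mc{I}_\mu$ is inherited from the convexity of $\Lambda^\ast$ together with the $v \mapsto 1/|v|$ change of variables. The main obstacle is \emph{not} the ergodic convergence (which is clean because $\mu$ is a product measure), but rather the sharp large-deviation lower bound near the boundary of the domain of $\Lambda$: there one must construct, on a set of full $\mu$-measure simultaneously for all rational $\lambda$'s, environments with sufficiently many ``atypical'' blocks to realise the desired speed, and then verify that the resulting rate matches $\Lambda^\ast$. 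Once this is done the rate function is automatically deterministic and convex on $\R$.
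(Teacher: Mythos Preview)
The paper does not give its own proof of Proposition~\ref{staticLDPque}; it is quoted from~\cite{GdH94} as a known background result, with the remark that a representation of $\mc{I}$ in terms of random continued fractions can be found there. So there is no ``paper's proof'' to compare your proposal against. What you have written is, in broad strokes, the strategy actually used in~\cite{GdH94} and~\cite{CGZ00}: pass to hitting times, exploit the additive i.i.d.\ structure of $\tau_i$ under the quenched law to get an ergodic average for the cumulant generating function, run G\"artner--Ellis for the weak LDP, and then invert via~\eqref{IJrel}. In that sense your proposal is the correct outline.

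Two technical slips are worth flagging. First, in Step~1 the sets $\{Z_n/n \geq a\}$ and $\{T_{\lceil an\rceil}\leq n\}$ do \emph{not} coincide: hitting $\lceil an\rceil$ before time $n$ does not force $Z_n\geq an$, since the walk can overshoot and return. The correct statement is an asymptotic sandwich (one inclusion holds, and the reverse is recovered up to a negligible correction using nearest-neighbour steps and the weak LDP for hitting times); this is exactly how the inversion~\eqref{IJrel} is justified in~\cite{CGZ00} and~\cite[Chapter~VII]{dH00}. Second, in Step~4 convexity of $\mc{I}_\mu$ does not follow from a generic ``change of variables $v\mapsto 1/|v|$'' argument: the operation $v\mapsto v\,\Lambda^*(1/v)$ is the perspective transform, which preserves convexity only because $\Lambda^*$ is jointly convex as a function on the epigraph cone; and one still has to glue the two half-lines and the point $v=0$, which is where the flat piece $[0,v_\mu]$ in Fig.~\ref{fig:Irf} appears. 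You correctly identify the genuinely delicate part as the lower bound near the boundary of the domain of $\Lambda$ (non-steepness when $\langle\rho\rangle\geq 1$), which in~\cite{GdH94,CGZ00} is handled by a direct change-of-measure / block construction rather than by G\"artner--Ellis alone.
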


\noindent 
See~\cite{GdH94} for a representation of $\mc{I}$ in terms of random continued fractions and Fig.~\ref{fig:Irf} for the qualitative behaviour of $\mc{I}$ on different regimes.

In the sequel we will need refined results about the cumulant generating function of $Z_n/n$. For that we 
need to introduce the hitting times to the right
\begin{equation}
\label{Htimes}
H_n = \inf \chv{m \in \N \colon Z_m = n}, \quad n \in \N,
\end{equation}
state the \emph{weak} LDP for $H_n/n$, which was derived in Comets, Gantert and Zeitouni~\cite{CGZ00}, 
and show its relation with the LDP for $Z_n/n$. See also den Hollander~\cite[Chapter VII]{dH00}. We recall 
that for the weak LDP the second line in~\eqref{LDbds} is only required to hold for compact sets, and the 
rate function is only required to be lower semi-continuous. 

\begin{proposition}[{\bf Quenched LDP for RWRE hitting times}~\cite{CGZ00}] 
\label{htRWRE} 
Suppose that $\mu$ is basic. Then, for  $\mu$-a.e.\ $\omega$, $(H_n/n)_{n\in\N}$ under $P^\go_0$ satisfies 
the weak LDP on $\R$ with rate $n$ and with a convex and deterministic weak rate function $\mc{J} =
\mc{J}_\mu$ given by (see Fig.~{\rm~\ref{rateJJ*}})
\begin{equation} 
\label{qRF_T}
\mc{J}(x) = \sup_{\gl\in\R} \big[\gl x - \mc{J}^*(\gl)\big], \qquad x \in \R,
\end{equation}
where
\begin{equation} 
\label{qCGF_T}
\mc{J}^*(\gl) = \lim_{n\to\infty}\frac{1}{n} \log E^\go_0\big[e^{\gl H_n}\big] \quad \go-\text{a.s.},
\qquad \gl \in \R.
\end{equation}
\end{proposition}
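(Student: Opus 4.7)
The approach rests on writing hitting times as a sum of independent (non-identically distributed) quenched variables. Set $\tau_0 := H_1$ and $\tau_k := H_{k+1} - H_k$ for $k \geq 1$; the strong Markov property applied at the successive first-passage times $H_k$ shows that $\tau_0,\tau_1,\ldots$ are independent under $P^\omega_0$, and $\tau_k$ has the same law under $P^\omega_0$ as $H_1$ under $P^{\theta^k \omega}_0$, where $\theta$ denotes the canonical left shift on $(0,1)^\Z$. Consequently
\begin{equation*}
\frac{1}{n} \log E^\omega_0\bigl[e^{\gl H_n}\bigr]
= \frac{1}{n} \sum_{k=0}^{n-1} \phi_\gl(\theta^k \omega),
\qquad \phi_\gl(\omega) := \log E^\omega_0\bigl[e^{\gl H_1}\bigr],
\end{equation*}
which reduces the statement to a quenched G\"artner--Ellis analysis.

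To secure existence of $\mc{J}^*(\gl)$, I would invoke the Birkhoff ergodic theorem for the i.i.d.\ product system $(\mu,\theta)$: the averages on the right converge $\mu$-a.s.\ to $\langle \phi_\gl \rangle$, provided $\phi_\gl^+ \in L^1(\mu)$. For $\gl \leq 0$ this is trivial since $\phi_\gl \leq 0$. For $\gl > 0$ one must check finiteness and integrability on a suitable half-line $(-\infty,\gl^*)$: uniform ellipticity yields exponential moments of $H_1$ in every fixed environment, and a tail analysis of the potential $\sum \log \rho$ (in the spirit of~\cite{CGZ00}) locates the threshold $\gl^*$ beyond which $\phi_\gl \equiv +\infty$ $\mu$-a.s. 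Passing to a countable dense set of $\gl$ and using convexity of $\gl \mapsto \mc{J}^*_n(\gl) := \tfrac{1}{n} \log E^\omega_0[e^{\gl H_n}]$ yields a single $\mu$-null exceptional set on which the limit fails.

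Given the limiting cumulant generating function $\mc{J}^*$, the weak LDP follows by the standard G\"artner--Ellis scheme. The upper bound on compact sets comes from the exponential Chebyshev inequality $P^\omega_0(H_n/n \in K) \leq \exp\bigl[-n(\gl x - \mc{J}^*_n(\gl))\bigr]$ optimised over $\gl$ and $x \in K$, combined with~\eqref{qRF_T}; convexity and lower semicontinuity of $\mc{J}$ are automatic from its definition as a Legendre transform. For the lower bound on open sets, for $x$ in the interior of the effective domain of $\mc{J}$ I would pick a maximiser $\gl_x$, introduce the tilted quenched measure with Radon--Nikodym derivative $e^{\gl_x H_n}/E^\omega_0[e^{\gl_x H_n}]$ under which the $\tau_k$ remain independent with exponentially shifted laws, apply a second quenched LLN (again via Birkhoff) to concentrate $H_n/n$ near $(\mc{J}^*)'(\gl_x) = x$, and undo the tilt to extract $\liminf \tfrac{1}{n}\log P^\omega_0(|H_n/n - x| < \gep) \geq -\mc{J}(x)$.

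The main obstacle is the interplay between random environments and heavy-tailed hitting times: even under uniform ellipticity, atypical stretches of $\omega$ build long-lived traps, so $H_1$ has only polynomial annealed moments and the finite-$\gl$ domain of $\phi_\gl$ is genuinely restricted. Locating $\gl^*$ and controlling the tilted LLN uniformly therefore depend on delicate tail estimates for the quenched potential rather than on soft Markov-chain arguments, which is exactly where the analysis of~\cite{CGZ00} does its real work.
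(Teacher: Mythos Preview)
The paper does not contain a proof of this proposition: it is quoted as a known background result from~\cite{CGZ00} (see also~\cite[Chapter VII]{dH00}) and is used only as input for the later arguments. There is therefore no ``paper's own proof'' to compare against.

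That said, your outline is essentially the standard argument one finds in the cited references: decompose $H_n=\sum_{k=0}^{n-1}\tau_k$ via the strong Markov property into independent pieces whose laws are shifts of the environment, apply Birkhoff's ergodic theorem to the averages of $\phi_\gl$ to obtain the deterministic limit $\mc{J}^*(\gl)$, and then run the G\"artner--Ellis machinery (Chebyshev for the compact upper bound, exponential tilting plus a second ergodic LLN for the lower bound). One small inaccuracy: uniform ellipticity alone does not give exponential moments of $H_1$ ``in every fixed environment''; a fixed $\omega$ can still produce arbitrarily deep traps to the left of the origin, and the finiteness of $\phi_\gl(\omega)$ for $\gl>0$ is an $\omega$-dependent statement whose $\mu$-a.s.\ threshold $\gl^*$ comes from the tail analysis you allude to. You correctly flag that this, together with the differentiability of $\mc{J}^*$ needed for the tilted LLN, is where the substantive work of~\cite{CGZ00} lies.
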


\begin{figure}[htbp]
\begin{center}
\includegraphics[clip, trim=4.2cm 10cm 3cm 13cm, width=.7\textwidth]{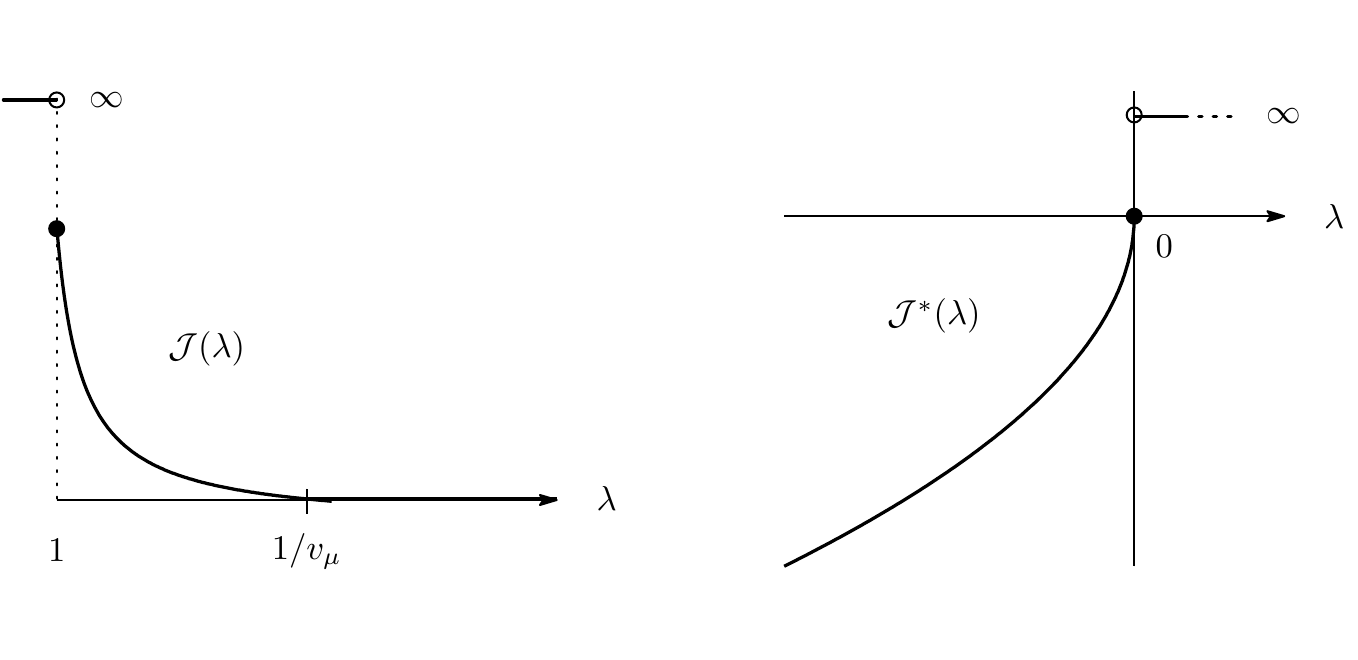}
\caption{\emph{Left:} Graph of $\mc{J}$, the quenched rate function of RWRE hitting 
times in~\eqref{qRF_T}. \emph{Right:} Graph of $\mc{J}^*$, the scaled cumulant generating 
function of RWRE hitting times in~\eqref{qCGF_T}.}
\label{rateJJ*}
\end{center}
\end{figure}

\noindent
For the hitting times to the left, defined by~\eqref{Htimes} with $n\in-\N$, we have the weak rate 
function $\widetilde{\mc{J}}=\widetilde{\mc{J}}_{\mu}$:
\begin{equation}
\widetilde{\mc{J}}(x) = \mc{J}(x) - \langle \log \rho \rangle, \qquad x\in \R.
\end{equation}
Moreover, the following relation between $\mc{J}$ and $\mc{I}$ holds (see ~\cite[Chapter VII]{dH00}):
\begin{equation} 
\label{IJrel}
\mc{I}(x) =
\begin{cases} 
x \mc{J}(1/x), &x \in (0,1],\\
0, &x  = 0,\\
(-x) \mc{J}(1/(-x)), &x \in [-1,0).
\end{cases}
\end{equation}
\begin{figure}[htbp]
\centering
\includegraphics[clip, trim=1.0cm 10cm 1cm 14cm, width=1\textwidth]{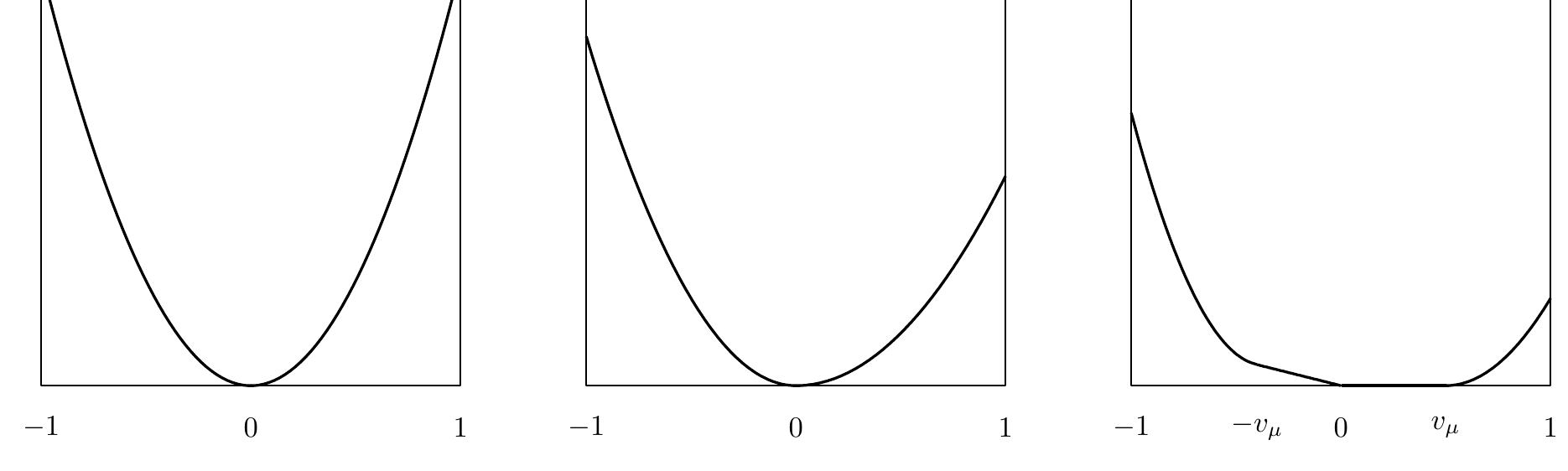}
\caption{Graph of $\mc{I}$, the quenched rate function of RWRE displacements in~\eqref{IJrel}. 
Three cases are shown from left to right: recurrent, transient with zero speed, transient with positive speed. 
}
\label{fig:Irf}
\end{figure}
 
\noindent

The empirical speed of RWRE also satisfies the LDP under the annealed law. 

\begin{proposition}[{\bf Annealed LDP for RWRE displacements}~\cite{CGZ00}] 
\label{annLDP}
Suppose that $\mu$ is basic. Then $(Z_n/n)_{n\in\N}$ under $P^\mu_0$ satisfies the LDP on $\R$
with rate $n$ and with a convex rate function $\mc{I}^{\text{ann}}=\mc{I}^{\text{ann}}_\mu$.
\end{proposition}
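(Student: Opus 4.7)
The plan is to deduce the annealed LDP for $Z_n/n$ from an annealed LDP for the hitting times $H_n/n$, in analogy with the quenched relation~\eqref{IJrel}. By the reflection argument preceding~\eqref{+0R}, we may assume $\langle \log\rho\rangle \le 0$ and derive the LDP on $(0,1]$ via right-hitting times; the case $x \in [-1,0)$ follows from the analogous result for left-hitting times in the reflected environment, and the value at $x=0$ will come from semicontinuity (or, in the transient case, from a direct slowdown estimate).

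The heart of the argument is to identify the annealed scaled cumulant generating function
\[
\Lambda^{\text{ann}}(\lambda):=\lim_{n\to\infty}\frac{1}{n}\log E^\mu_0\!\left[e^{\lambda H_n}\right], \qquad \lambda\in\R.
\]
I would obtain this via the classical branching-process-in-random-environment (BPRE) representation: writing $H_n = n + 2\sum_{k=0}^{n-1} \xi_k^{(n)}$, where $\xi_k^{(n)}$ counts the number of $(k+1)\to k$ transitions performed before time $H_n$, the sequence $(\xi_k^{(n)})_{k=n-1,\dots,0}$ is a branching process with geometric offspring law whose parameter depends on $\omega(k)$. Under $P^\mu_0$, the i.i.d.\ structure of $\mu$ turns the BPRE into an inhomogeneous Markov chain whose one-step generating function is obtained by averaging against $\alpha$, and $\Lambda^{\text{ann}}(\lambda)$ is identified as the logarithm of the Perron--Frobenius eigenvalue of the associated tilted transfer operator $T_\lambda$. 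Convexity is then automatic, and uniform ellipticity~\eqref{UEllip} provides the spectral gap needed for the limit to exist and for $\Lambda^{\text{ann}}$ to be regular on the interior of its domain.

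Once $\Lambda^{\text{ann}}$ is in hand, the Gärtner--Ellis theorem delivers the annealed LDP for $H_n/n$ with convex rate function $\mc{J}^{\text{ann}}=(\Lambda^{\text{ann}})^*$. To transfer to displacements, I use the duality $\{Z_n\ge k\}=\{H_k \le n\}$, which under the change of variables $x\leftrightarrow 1/x$ yields
\[
\mc{I}^{\text{ann}}(x)=x\,\mc{J}^{\text{ann}}(1/x), \qquad x\in(0,1],
\]
mirroring~\eqref{IJrel} on the quenched side; the extension to $[-1,0)$ is by reflection and to $x=0$ by semicontinuity. Convexity of $\mc{I}^{\text{ann}}$ follows from convexity of $\mc{J}^{\text{ann}}$ together with the homogeneity of the contraction formula.

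The main obstacle is pinning down the effective domain and regularity of $\Lambda^{\text{ann}}$. Under the annealed law, $H_n$ can have only polynomial tails in the zero-speed and nestling regimes, so $\Lambda^{\text{ann}}(\lambda)<\infty$ only for $\lambda$ below a critical value $\lambda_c\in[0,\infty]$ dictated by annealed moments of $\log\rho$. One must (i) prove $\lambda_c>0$ using~\eqref{UEllip}, (ii) establish essential smoothness of $\Lambda^{\text{ann}}$ on $(-\infty,\lambda_c)$, and (iii) verify the steepness required to close the Gärtner--Ellis upper bound at the right edge of the domain. These are precisely the delicate BPRE estimates that, through the contraction, produce the well-known flat piece of $\mc{I}^{\text{ann}}$ at the origin in the nestling case; once they are in place, the remaining steps of the proof are essentially routine.
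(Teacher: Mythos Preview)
The paper does not prove this proposition: it is quoted from \cite{CGZ00} as a background fact, with only the variational identity $\mc{I}^{\text{ann}}_\mu(\theta) = \inf_{\nu}[\mc{I}_\nu(\theta)+|\theta|\,h(\nu \mid \mu)]$ recorded afterwards. So there is no in-paper argument to compare against; the benchmark is \cite{CGZ00} itself.

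Your route nonetheless has a concrete obstruction at step~(i). Under the basic hypothesis of Definition~\ref{D:mu} the nestling condition $\rho_{\min}<1<\rho_{\max}$ is always in force, and it is precisely this condition that makes the annealed hitting time heavy-tailed: $P^\mu_0(H_1>t)$ decays only like a power of $t$, with exponent the root $s$ of $\langle \rho^s\rangle=1$ (this is the Kesten--Kozlov--Spitzer tail; see \cite{KKS75,DPZ96} and the related Proposition~\ref{flatannLDbound}). Uniform ellipticity does not restore exponential moments: it caps the local bias at $\rho_{\max}$, but a stretch of length $L$ with $\rho\approx\rho_{\max}$ still produces a trap of strength $\sim\rho_{\max}^{L}$, and such stretches have probability only exponentially small in $L$, which yields a power law in $t$. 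Hence $E^\mu_0[e^{\lambda H_1}]=\infty$ for every $\lambda>0$, and since $H_n\ge H_1$ this forces $\Lambda^{\text{ann}}(\lambda)=\infty$ for all $\lambda>0$, i.e.\ $\lambda_c=0$. G\"artner--Ellis then exposes only the points reached by $(\Lambda^{\text{ann}})'$ on $(-\infty,0)$, namely speeds in $(v_\mu,1]$ for $Z_n/n$, and says nothing about the flat piece $[0,v_\mu]$ or the negative half-line. Filling those ranges is not the ``essentially routine'' closing step you describe; it is the substantive content of the theorem. In \cite{CGZ00} it is handled not through the moment generating function but through a change of measure on the environment law, which directly produces the variational formula above. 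The hitting-time-to-displacement transfer via the analogue of~\eqref{IJrel}, and the reflection to treat $x<0$, are the parts of your sketch that do align with the standard proof.
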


\noindent
As shown in~\cite{CGZ00}, the annealed and the quenched rate function are related through the 
following variational principle 
\begin{equation}
\mc{I}^{\text{ann}}(\theta) = \mc{I}^{\text{ann}}_\mu(\theta)
= \inf_{\nu}\big[\mc{I}_\nu(\theta)+|\theta|\,h(\nu \mid \mu)\big],
\end{equation}
where  $\mc{I}_\nu$ is the quenched rate function associated with a random environment that has law $\nu$, 
$h(\nu \mid \mu)$ denotes the relative entropy of $\nu$ with respect to $\mu$, and the infimum runs over the 
set of probability measures on $(0,1)^\Z$ endowed with the weak topology (see~\cite{CGZ00} for more 
details). In particular, $\mc{I}^{\text{ann}}$ is qualitatively similar to  $\mc{I}$ in Fig.~\ref{fig:Irf}, in the 
sense that $\mc{I}^{\text{ann}}$ is strictly decreasing on $[-1,0]$, zero on $[0,v_\mu]$, and strictly 
increasing on $[v_\mu,1]$. The presence of the flat piece $[0,v_\mu]$ in the positive speed case 
makes our analysis more delicate, and we will need the following large deviation bound characterising 
the right decay when zooming in on the flat piece:
\begin{proposition}[{\bf Refined annealed large deviations in the flat piece}~\cite{DPZ96}] 
\label{flatannLDbound}
Suppose that $\mu$ is basic and that $\langle \rho \rangle < 1$. Then there is a unique $s > 1$ 
satisfying $\langle \rho^s \rangle = 1$ such that, for any $\mathcal{O} \subset (0,v_\mu)$ open 
and separated from $v_\mu$,
\begin{equation}
\lim_{n\to\infty} \frac{1}{\log n} \log P_0^\mu\prt{\frac{Z_n}{n} \in \mathcal{O}} = 1 - s.
\end{equation}
\end{proposition}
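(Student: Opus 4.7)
\emph{Proof plan.} The strategy is to transfer the event $\{Z_n/n\in\mathcal{O}\}$ to a hitting-time event and then exploit the polynomial (heavy-tail) decay of annealed hitting-time deviations.

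Set $v^{*}=\sup\mathcal{O}<v_\mu$ by the separation assumption and pick $v_{*}\in\mathcal{O}$ with $v_{*}<v^{*}$. Since the walk is transient to the right, the identity $\{Z_n\le m\}=\{H_{m+1}>n\}$ yields, on the one hand,
\begin{equation*}
P_0^\mu(Z_n/n\in\mathcal{O})\;\le\;P_0^\mu\bigl(H_{\lceil nv^{*}\rceil+1}>n\bigr),
\end{equation*}
and, on the other,
\begin{equation*}
P_0^\mu(Z_n/n\in\mathcal{O})\;\ge\;P_0^\mu\bigl(H_{\lceil nv_{*}\rceil}\le n<H_{\lfloor nv^{*}\rfloor+1}\bigr).
\end{equation*}
With $m=\lceil nv^{*}\rceil\asymp n$, the pertinent event is $\{H_m>n\}$, and since $n/m\to 1/v^{*}>1/v_\mu=\EE_0^\mu[\tau_1]$ with $\tau_k=H_k-H_{k-1}$, this is the tail of $H_m$ above its mean by a fixed fraction.

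Next I would bring in the tail of annealed crossings. Because $\langle\rho\rangle<1$ and $\rho_{\max}>1$, the map $s\mapsto\langle\rho^s\rangle$ is convex with $\langle\rho^0\rangle=1$, $\langle\rho\rangle<1$, and $\lim_{s\to\infty}\langle\rho^s\rangle=+\infty$, so there exists a unique $s>1$ with $\langle\rho^s\rangle=1$. Kesten's analysis of the potential $\sum\log\rho$ supplies the annealed power-law tail $P_0^\mu(\tau_1>t)=t^{-s+o(1)}$ as $t\to\infty$, which is exactly what forces $\mathcal{I}^{\text{ann}}$ to be flat on $[0,v_\mu]$. The $\tau_k$ are dependent under $P_0^\mu$, so I would employ a regeneration decomposition of Sznitman--Zerner type (available because the speed is positive) to write $H_m=\sum_{j}\xi_j$ with $(\xi_j)$ i.i.d.\ inheriting the same tail index $-s$. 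The single-big-jump principle for heavy-tailed i.i.d.\ sums with finite mean then gives
\begin{equation*}
P\Bigl(\textstyle\sum_{j=1}^{k}\xi_j>\EE[\xi_1]\,k+\eta k\Bigr)\;\sim\; k\cdot P(\xi_1>\eta k)\;\asymp\;k^{1-s}, \qquad \eta>0,
\end{equation*}
which in our setting yields the upper bound $P_0^\mu(H_m>n)\le n^{1-s+o(1)}$. The matching lower bound is obtained by engineering a single deep trap: a block of length $\log n$ in which the potential accumulates at least $\log n$ has annealed probability $\asymp n^{-s}$, so the event that at least one such trap lies in $[0,nv_{*}]$ has probability $\asymp n\cdot n^{-s}=n^{1-s}$, and such a trap detains the walker for time of order $n$, thereby pushing $Z_n/n$ into $\mathcal{O}$.

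The hardest part I anticipate is making the trap picture quantitative at the sharp polynomial scale. Two issues demand care: first, Kesten's tail asymptotic $P_0^\mu(\tau_1>t)=t^{-s+o(1)}$ must be deployed with uniform error control along the regeneration blocks so that subleading corrections do not spoil the big-jump asymptotic; second, configurations with two or more independent deep traps in $[0,n]$ must be shown negligible, which follows from a routine union bound giving contribution $\asymp n^{2-2s}\ll n^{1-s}$ since $s>1$. The exponent $1-s$ in the statement is then the transparent balance between the roughly $n$ candidate trap locations and the $n^{-s}$ annealed cost of a single trap deep enough to arrest the walker for time of order $n$.
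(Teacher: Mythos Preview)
The paper does not prove this proposition at all: it is stated in Section~1.1 among the background facts on RWRE and carries the citation~\cite{DPZ96} in its very title. It is used as input (to verify Assumption~$(A3)$ in the proof of Theorem~\ref{SLLN}), not as something derived in the paper. So there is no ``paper's own proof'' to compare your proposal against.

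That said, your sketch is a reasonable outline of the Dembo--Peres--Zeitouni argument: passing from displacements to hitting times, identifying the index $s$ via convexity of $t\mapsto\langle\rho^t\rangle$, invoking the Kesten tail $P_0^\mu(\tau_1>t)=t^{-s+o(1)}$, and reading off the exponent $1-s$ from a single-big-jump/trap heuristic. Two cautions if you were to make this rigorous. First, the Sznitman--Zerner regeneration machinery is not how~\cite{DPZ96} proceeds in one dimension; they work directly with the explicit structure of the hitting times $\tau_k$ and the potential $\sum\log\rho$, and the dependence among the $\tau_k$ under the annealed law is handled by hand rather than by abstracting to an i.i.d.\ block sum. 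Second, your lower bound needs more than producing a trap in $[0,nv_*]$: you must also ensure that on the complementary event the walk still reaches $nv_*$ by time $n$ with probability bounded away from zero, so that $Z_n/n$ actually lands in $\mathcal{O}$ rather than below it. This is where the separation from $v_\mu$ and the positivity of the speed enter, and it requires a genuine argument, not just a union bound.
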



\subsection{RWCRE: Cooling} 
\label{model}

The cooling random environment is the \emph{space-time} random environment built by partitioning 
$\N_0$, and assigning independently to each piece an environment sampled from $\mu$ in~\eqref{alpha} 
(see Fig.~\ref{fig:CRE}). Formally, let $\tau \colon\, \N_0 \to \N_0$ be a strictly increasing function 
with $\tau(0) = 0$, referred to as the \emph{cooling map}. The cooling map determines a sequence 
of \emph{refreshing} times $\prt{\tau(k)}_{k\in \N_0}$ that we use to construct the dynamic random 
environment.

\begin{definition}[{\bf Cooling Random Environment}] 
{\rm Given a cooling map $\tau$, let $\Omega=(\omega_k)_{k\in\N}$ be an i.i.d.\ sequence of random 
variables with law $\mu$ in~\eqref{alpha}. The \emph{cooling random environment} is built from the pair 
$(\Omega,\tau)$ by assigning the environment $\omega_k$ to the $k$-th interval $I_k$ defined by }
\begin{equation}
\label{interval}
I_k =[\tau(k-1), \tau(k)), \quad \quad k \in \N.
\end{equation}
\end{definition}

In the present paper we consider the \emph{cooling regime}, i.e., we consider $\tau$ such that the 
length of $I_k$ in~\eqref{interval} diverges:
\begin{equation} 
\label{coolingreg}
 T_k = \tau(k) - \tau(k - 1), \qquad \lim_{k\to\infty} T_k = \infty.
\end{equation}
The role of this assumption is clarified in Section~\ref{discussion}.  

\begin{figure}[htbp]
\vspace{0.5cm}
\begin{center} 
\includegraphics[clip, trim=3.0cm 10.5cm 3.2cm 15cm, width=0.9\textwidth]{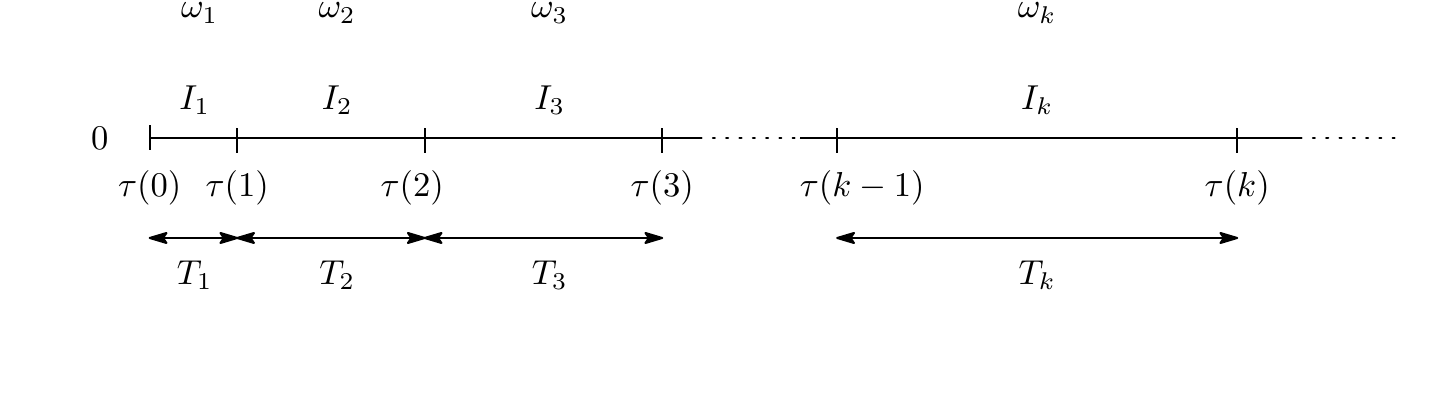}
\vspace{-1.2cm}
\caption{Structure of the cooling random environment $(\Omega,\tau)$.}
\label{fig:CRE}
\end{center}
\end{figure}

\begin{definition}[{\bf RWCRE}] 
{\rm Let $\tau$ be a cooling map and $\Omega$ an environment sequence sampled from $\mu^\N$.
We call \emph{Random Walk in Cooling Random Environment}  the Markov chain 
$X = (X_n)_{n\in\N_0}$ with state space $\Z$ and transition probabilities} 
\begin{equation} 
\label{Ker_Om}
P^{\Omega,\tau}(X_{n+1} = x + e \mid X_n = x) 
= \left\{
\begin{array}{ll}
\omega_{\ell(n)}(x), &e = 1,\\
1 - \omega_{\ell(n)}(x), &e = - 1,
\end{array}
\right. 
\qquad n \in \N_0,
\end{equation}
{\rm where} 
\begin{equation}
\label{ell}
\ell(n) = \inf\{k\colon\, \tau(k) > n\}
\end{equation}
{\rm is the index of the interval $n$ belongs to. Similarly to Definition~\ref{RWREdef}, we denote by} 
\be{RWCREmeas}
P_x^{\Omega,\tau}(\cdot) \quad\text{ and }\quad  P_{x}^{\mu,\tau}(\cdot) 
= \int P_x^{\Omega,\tau}(\cdot)\,\mu^\N(\dd\Omega),
\ee
{\rm the corresponding \emph{quenched} and \emph{annealed} laws, respectively.}
\end{definition} 

\noindent
In words, RWCRE moves according to a given environment sampled from $\mu$, until the next 
refreshing time $\tau(k)$, when a new environment is sampled from $\mu$. Equivalently, the 
random walk trajectory is independent across the intervals, and during each interval $I_k$ 
moves like a RWRE in the environment $\omega_k$. In view of assumption~\eqref{coolingreg}, 
the environment is resampled along a diverging sequence of time increments. Our goal is to 
understand in what way this makes RWCRE behave similarly as RWRE (see Section~\ref{discussion} 
below).       

The position $X_n$ of RWCRE admits the following key decomposition into pieces of RWRE.
Define the \emph{refreshed increments} and the \emph{boundary increment} as
\begin{equation} 
\label{space_incr}
Y_k = X_{\tau(k)} - X_{\tau(k-1)}, \quad k \in \N, \qquad \bar{Y}^n = X_n - X_{\tau(\ell(n)-1)},
\end{equation}
and the \emph{running time at the boundary} as 
\begin{equation} 
\label{time_incr}
\qquad \bar{T}^n = n - \tau(\ell(n)-1).
\end{equation}
Note that, by~\eqref{coolingreg},
\begin{equation} 
\label{time_incr*}
\sum_{k=1}^{\ell(n)-1} T_k + \bar{T}^n = n.
\end{equation}
By construction, we can write $X_n$ as the sum
\begin{equation}
\label{X_dec}
X_n = \sum_{k=1}^{\ell(n)-1} Y_k + \bar Y^n, \quad  n\in\N_0.
\end{equation}
This decomposition shows that, in order to analyse $X$, we must analyse the vector 
\begin{equation}
  (Y_1,\cdots,Y_{\ell(n)-1},\bar{Y}^n)
\end{equation}
consisting of independent components, each distributed as an increment of $Z$ (defined in 
Section~\ref{RWRE}) over a given time length determined by $\tau$ and $n$. Fig.~\ref{fig:RWCRE} 
illustrates this piece-wise decomposition of $X_n$. More precisely, for any measurable function 
$f\colon\,\Z\to\R$, any $\Omega$ sampled from $\mu^\N$ and any $\tau$,
\begin{equation} 
\label{equiv_distr}
E^{\Omega,\tau}_0\crt{f(Y_k)} = E^{\omega_k}_0\crt{f(Z_{T_k})}, 
\qquad E^{\Omega,\tau}_0\crt{f(\bar{Y}^n)} = E^{\omega_{\ell(n)}}_0\crt{f(Z_{\bar{T}^n})}.
\end{equation}
\begin{figure}[htbp]
\begin{center}
\includegraphics[clip, trim=1.8cm 10cm 1.2cm 14cm, width=.95\textwidth]{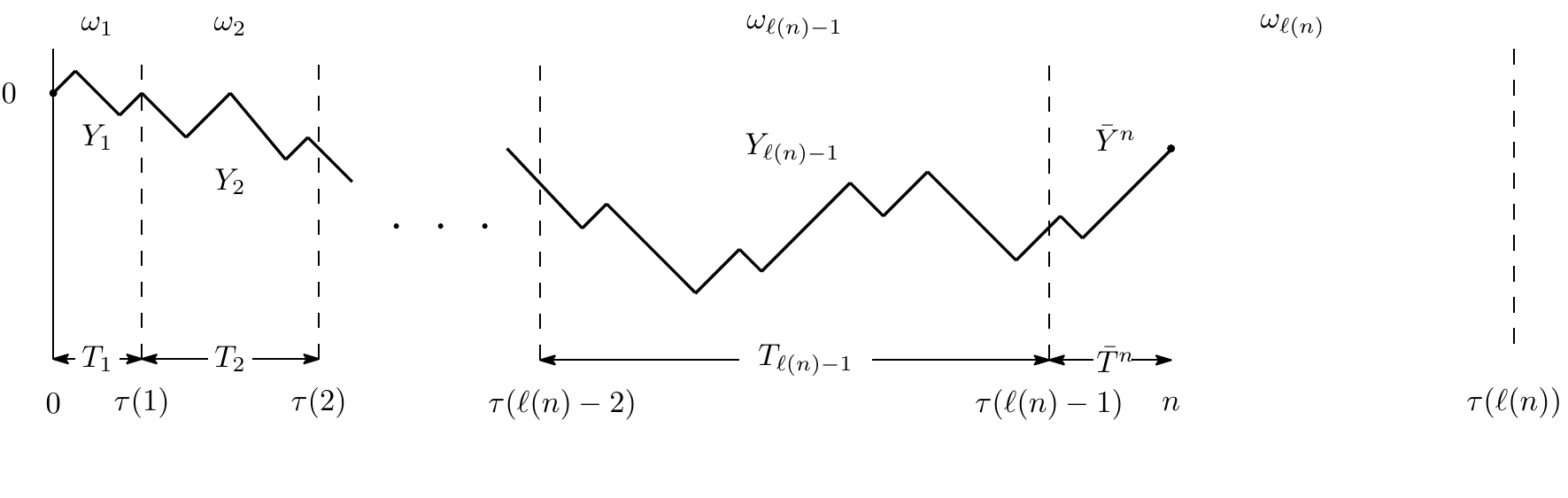}
\vspace{-.5cm}
\caption{The decomposition of RWCRE in pieces of RWRE as presented in~\eqref{X_dec}.}
\label{fig:RWCRE}
\end{center}
\end{figure}

\subsection{Main results} 
\label{results}

We can now state our main results for the asymptotic behaviour of RWCRE.

\begin{theorem}[{\bf SLLN for RWCRE displacements}]
\label{SLLN}
Suppose that $\mu$ is basic and that $\tau$ satisfies~\eqref{coolingreg}. Then, for $\mu^{\bb{N}}$- a.e.\ 
$\Omega$,
\begin{equation}
\lim_{n\to\infty} \frac{X_n}{n} = v_\mu \qquad P^{\Omega,\tau}_0-\text{a.s.}
\end{equation}
with $v_\mu$ as in~\eqref{speed}.
\end{theorem}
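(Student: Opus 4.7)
The natural starting point is the independent-increment decomposition \eqref{X_dec}, $X_n=\sum_{k=1}^{\ell(n)-1}Y_k+\bar Y^n$, combined with the identity \eqref{time_incr*}, $n=\sum_{k=1}^{\ell(n)-1}T_k+\bar T^n$. Subtracting $v_\mu n$ gives
\[
X_n-v_\mu n=\sum_{k=1}^{\ell(n)-1}(Y_k-v_\mu T_k)+(\bar Y^n-v_\mu \bar T^n),
\]
so it suffices to show that both terms are $o(n)$ for $\mu^{\mathbb N}$-a.e.\ $\Omega$ and $P_0^{\Omega,\tau}$-a.s. The boundary contribution is harmless: $|\bar Y^n-v_\mu\bar T^n|\le 2\bar T^n$; on $\{\bar T^n\to\infty\}$ one uses the RWRE SLLN (Proposition \ref{prop:LLN}) applied in the environment $\omega_{\ell(n)}$ to conclude $\bar Y^n/\bar T^n\to v_\mu$, while on the complement the term stays bounded. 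So the real work is on the main sum.

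The plan for the main sum is to center each $Y_k$ around its quenched mean by writing
\[
Y_k-v_\mu T_k=D_k+R_k,\qquad D_k:=Y_k-E_0^{\omega_k}[Z_{T_k}],\quad R_k:=E_0^{\omega_k}[Z_{T_k}]-v_\mu T_k.
\]
Conditionally on $\Omega$, the $D_k$'s are independent, centered, and bounded by $T_k$. I would apply the RWRE concentration inequality of Theorem \ref{P:MB} to obtain sub-Gaussian-type tails for each $D_k$ with parameter $O(T_k)$, so that a Hoeffding/Azuma bound yields
\[
P_0^{\Omega,\tau}\Bigl(\Bigl|\sum_{k=1}^{\ell(n)-1}D_k\Bigr|>\varepsilon n\Bigr)\le \exp\bigl(-c\varepsilon^2 n^2/\textstyle\sum_k T_k\bigr)\le e^{-c\varepsilon^2 n}.
\]
Since this is summable in $n$, Borel–Cantelli gives $n^{-1}\sum_{k=1}^{\ell(n)-1}D_k\to 0$ quenched a.s., uniformly in $\Omega$.

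For the $R_k$'s, note they are independent and depend only on $\omega_k$. The RWRE SLLN gives $R_k/T_k\to 0$ in $\mu$-probability as $T_k\to\infty$; strengthening this to $\mu^{\mathbb N}$-almost sure convergence of the sequence $(R_k/T_k)_{k}$ requires quantitative control, and this is precisely where the concentration inequality of Theorem \ref{P:MB} kicks in: it gives, for fixed $\varepsilon>0$, a bound $\mu(|R_k|/T_k>\varepsilon)\le \phi(T_k,\varepsilon)$ with $\phi(T,\varepsilon)\to 0$ fast enough so that, combined with independence of the $\omega_k$'s, Borel–Cantelli delivers $R_k/T_k\to 0$ for $\mu^{\mathbb N}$-a.e.\ $\Omega$. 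To pass from this pointwise statement to
\[
\frac{1}{n}\sum_{k=1}^{\ell(n)-1}R_k=\frac{1}{n}\sum_{k=1}^{\ell(n)-1}\frac{R_k}{T_k}\,T_k\longrightarrow 0,
\]
I would invoke the cooling ergodic theorem (Theorem \ref{CEThm}), which is designed exactly to turn convergence of a sequence indexed by $k$ into convergence of the corresponding time-weighted averages along the triangular array $(T_k)$.

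The main obstacle I expect is quantitative rather than structural: the cooling assumption \eqref{coolingreg} says only that $T_k\to\infty$, with no lower bound on the rate, so the tail bounds provided by Theorem \ref{P:MB} must degrade slowly enough to be summable against \emph{any} such divergent schedule for the Borel–Cantelli step on the $R_k$'s. The delicate subcase is when $\mu$ is in the trapping regime $\langle\log\rho\rangle<0$, $\langle\rho\rangle\ge 1$ (transient with zero speed), where $R_k/T_k$ converges to $0$ only slowly due to traps; here one probably needs the refined annealed bound of Proposition \ref{flatannLDbound} to get a polynomial decay of the right order, and the cooling ergodic theorem must be applied in a form robust to this slow concentration. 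Everything else is standard once these two ingredients are in hand.
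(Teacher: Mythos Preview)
Your decomposition $Y_k-v_\mu T_k=D_k+R_k$ with $D_k=Y_k-E_0^{\omega_k}[Z_{T_k}]$ and $R_k=E_0^{\omega_k}[Z_{T_k}]-v_\mu T_k$ is natural, but the argument has two genuine gaps.

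First, the Hoeffding/Azuma step is incorrect as stated. The $D_k$'s are only known to be bounded by $2T_k$, so Hoeffding's inequality places $\sum_k T_k^2$ in the denominator, not $\sum_k T_k$. When the $T_k$ grow fast (e.g.\ geometrically) one has $\sum_{k<\ell(n)}T_k^2=\Theta(n^2)$, and the resulting bound $\exp(-c\varepsilon^2)$ is not summable in $n$. You appeal to Theorem~\ref{P:MB} to justify the sharper sub-Gaussian tail with variance proxy $O(T_k)$, but that theorem concerns the $\mu$-concentration of the quenched \emph{cumulant generating function} $\tfrac1n\log E_0^\omega[e^{\lambda Z_n}]$ around $\mathcal{I}^*(\lambda)$; it says nothing about quenched tails of $Z_n-E_0^\omega[Z_n]$, and in fact such sub-Gaussian tails fail in the regimes where the fluctuations of $Z_n$ are of order $n^{1/s}$ with $s\in(1,2)$. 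For the same reason Theorem~\ref{P:MB} does not yield a bound on $\mu(|E_0^\omega[Z_n]/n-v_\mu|>\varepsilon)$ (at $\lambda=0$ it is vacuous), so it cannot drive the Borel--Cantelli argument for the $R_k$'s either. A side remark: the delicate case for the tail of $Z_n/n-v_\mu$ is the \emph{positive-speed} regime $\langle\rho\rangle<1$, where the rate function has a flat piece on $[0,v_\mu]$, not the transient zero-speed regime you single out; in the zero-speed case the annealed LDP already gives exponential decay.

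The paper's route avoids both difficulties in one stroke. It works under the \emph{annealed} law and applies Theorem~\ref{CEThm} directly with $\psi_n^{(k)}$ distributed as $Z_n$ under $P_0^\mu$, so that the left-hand side of~\eqref{CoolingSum} is exactly $X_n/n$. No quenched centering and no $D_k/R_k$ split is needed: Assumption~$(A3)$ becomes the annealed tail bound $P_0^\mu(|Z_n/n-v_\mu|>\varepsilon)\le C'/n^\delta$, which follows from Proposition~\ref{annLDP} outside the flat piece and from Proposition~\ref{flatannLDbound} (polynomial decay with exponent $s-1>0$) on it. The quenched statement then follows from the annealed one via the trivial implication $P_0^{\mu,\tau}(A)=1\Rightarrow P_0^{\Omega,\tau}(A)=1$ for $\mu^{\mathbb N}$-a.e.\ $\Omega$. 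Theorem~\ref{P:MB} plays no role in the SLLN; it is used only for the LDP.
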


\begin{theorem}[{\bf Quenched LDP for RWCRE displacements}]
\label{thm:LDP}
Suppose that $\mu$ is basic and that $\tau$ satisfies~\eqref{coolingreg}. Then, for $\mu^\N$-a.e.\ $\Omega$, 
$(X_n/n)_{n\in\N}$ under $P^{\Omega,\tau}_0$ satisfies the LDP on $\R$ with rate $n$ and with the same
rate function $\mc{I}=\mc{I}_\mu$ as in Proposition~\ref{staticLDPque}.
\end{theorem}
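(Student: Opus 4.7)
I would establish matching upper and lower large deviation bounds by exploiting the key decomposition~\eqref{X_dec}: under $P_0^{\Omega,\tau}$ the refreshed increments $Y_1,\dots,Y_{\ell(n)-1},\bar Y^n$ are independent and each, by~\eqref{equiv_distr}, is distributed as an RWRE displacement in an independent basic environment sampled from $\mu$. This reduces everything to controlling sums of independent but non-identically distributed pieces whose lengths and environments are dictated by the cooling schedule $\tau$.

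\textbf{Upper bound.} Set $\psi_k(\gl) = T_k^{-1}\log E_0^{\omega_k}[e^{\gl Z_{T_k}}]$ and analogously $\bar\psi(\gl)$. Exponential Chebyshev yields
\[
\frac{1}{n}\log P_0^{\Omega,\tau}(X_n\ge na) \le -\gl a + \sum_{k=1}^{\ell(n)-1}\frac{T_k}{n}\psi_k(\gl) + \frac{\bar T^n}{n}\bar\psi(\gl),
\]
with an analogous bound for the left tail. Proposition~\ref{staticLDPque} combined with Varadhan's lemma (valid since $Z_T/T\in[-1,1]$) gives $\psi_k(\gl)\to\mc{I}^*(\gl):=\sup_{x\in[-1,1]}[\gl x-\mc{I}(x)]$ as $T_k\to\infty$ for $\mu$-a.e.\ $\omega_k$. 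The cooling ergodic theorem (Theorem~\ref{CEThm}), fed by the concentration inequalities of Theorem~\ref{P:MB} which quantify the fluctuations of $\psi_k(\gl)-\mc{I}^*(\gl)$, upgrades this pointwise statement to $\mu^\N$-a.s.\ convergence of the weighted sum to $\mc{I}^*(\gl)$. Optimizing over $\gl$ in a countable dense set and invoking convexity delivers the LDP upper bound with rate $\mc{I}^{**}=\mc{I}$, the last equality holding because Proposition~\ref{staticLDPque} provides convexity and compact level sets.

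\textbf{Lower bound.} The G\"artner--Ellis route is not available here: when $\langle\rho\rangle<1$, the flat piece $[0,v_\mu]$ of $\mc{I}$ (see Fig.~\ref{fig:Irf}) makes $\mc{I}^*$ non-differentiable at $0$, so essential smoothness fails. Instead, I argue directly from the product structure. For $x\in O$ open and $\delta>0$ small enough that $B_\delta(x)\subset O$, independence and the convex combination $X_n/n=\sum_k (T_k/n)(Y_k/T_k)+(\bar T^n/n)(\bar Y^n/\bar T^n)$ give
\[
P_0^{\Omega,\tau}(X_n/n\in O)\ge\prod_{k=1}^{\ell(n)-1} P_0^{\omega_k}\bigl(Z_{T_k}/T_k\in B_{\delta/2}(x)\bigr)\cdot P_0^{\omega_{\ell(n)}}\bigl(Z_{\bar T^n}/\bar T^n\in B_{\delta/2}(x)\bigr).
\]
Proposition~\ref{staticLDPque} applied to each factor, together with continuity of $\mc{I}$ in the interior of its effective domain, yields a per-piece bound $\ge-\mc{I}(x)-\gep$ per unit of time on a full-$\mu$-measure set of $\omega_k$. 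Theorem~\ref{P:MB} then supplies the $\omega_k$-uniform quantitative control needed to feed Theorem~\ref{CEThm}, giving
\[
\liminf_{n\to\infty}\frac{1}{n}\log P_0^{\Omega,\tau}(X_n/n\in O)\ge -\mc{I}(x)-\gep \quad \mu^\N\text{-a.s.}
\]
Letting $\gep\downarrow 0$ and then taking the infimum over $x\in O$ closes the argument.

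\textbf{Main obstacle.} The delicate step in both directions is the same: converting the pointwise (in $\omega$) quenched LDP of Proposition~\ref{staticLDPque} for a single RWRE piece into almost sure control of a weighted sum along the cooling schedule. As $n$ grows, one faces $\ell(n)-1$ independent environments simultaneously, with weights $T_k/n$ determined by an essentially arbitrary $\tau$ from~\eqref{coolingreg}, and potentially a non-negligible boundary piece of random length $\bar T^n$. The pointwise LDP alone is too weak for this, and bridging the gap is exactly the purpose of Theorems~\ref{CEThm} and~\ref{P:MB}: the concentration inequalities provide quantitative tail bounds on the per-piece cumulants and hitting-time probabilities uniformly in $T_k$, while the cooling ergodic theorem converts such bounds into the a.s.\ convergence of the triangular-array weighted averages that naturally arise in the cooling regime.
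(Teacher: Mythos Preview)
Your upper bound route coincides with the paper's computation of the limiting cumulant generating function: both feed $\psi_n^{(k)}(\Omega)=\log E_0^{\omega_k}[e^{\lambda Z_n}]$ into Theorem~\ref{CEThm}, with Theorem~\ref{P:MB} supplying assumption~(A3), to obtain $\lim_n n^{-1}\log E_0^{\Omega,\tau}[e^{\lambda X_n}]=\mc{I}^*(\lambda)$ a.s. The paper, however, does \emph{not} prove a lower bound directly. Instead it invokes the general quenched LDP of Campos \emph{et al.}~\cite{CDRRS13} for uniformly elliptic walks in ergodic space-time environments to guarantee existence of \emph{some} convex rate function $\mc{I}_{\mu,\tau}$; Varadhan's lemma then gives $\mc{I}_{\mu,\tau}^*=\mc{I}^*$, and convexity of both sides forces $\mc{I}_{\mu,\tau}=\mc{I}$. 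This sidesteps entirely the Gärtner--Ellis obstruction you correctly flag.

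Your direct product lower bound is a genuinely different and more self-contained route (no appeal to~\cite{CDRRS13}), and it is viable, but several technical points are glossed over. First, the concentration input you need for the lower bound is not Theorem~\ref{P:MB} itself but the interval estimate Lemma~\ref{L:Dt} (a building block of Theorem~\ref{P:MB}), which controls $n^{-1}\log P_0^\omega(Z_n/n\in\Delta)$ rather than cumulants. Second, applying Theorem~\ref{CEThm} to $\psi_n^{(k)}=\log P_0^{\omega_k}(Z_n/n\in B_{\delta/2}(x))$ requires checking~(A2), and the bounded-increment property $|\psi_{n+1}-\psi_n|\le C$ is not immediate for such log-probabilities; you do have $|\psi_n/n|\le|\log\mathfrak{c}|$ by ellipticity, and since Lemma~\ref{L:Dt} gives stretched-exponential decay (so~(A3) holds with any $\delta>2$), an inspection of the proof of Theorem~\ref{CEThm} shows this weaker bound suffices --- but that inspection must be made explicit. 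Third, for small $T_k$ or small $\bar T^n$ the factor $P_0^{\omega_k}(Z_{T_k}/T_k\in B_{\delta/2}(x))$ can vanish by parity; since only finitely many $k$ are affected under~\eqref{coolingreg}, one must exempt those pieces from the constraint and absorb their $O(1)$ total contribution into the $o(1)$ term. In short: the paper buys brevity by importing existence and convexity from~\cite{CDRRS13}; your approach buys independence from that external input at the price of the bookkeeping above.
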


Both theorems will be discussed in Section~\ref{discussion} and will be proved in Section~\ref{SLLDP}.
Their derivation will be based on the following general limit property tailored to RWCRE.

\begin{theorem}[{\bf Cooling Ergodic Theorem}]
\label{CEThm}
Let $(\psi_n^{(k)})_{n,k\in\N}$ be an array of real-valued random variables with law $\PP$ such that
the following assumptions hold:
\begin{itemize}
\item[$(A1)$] 
For all $k,k'\in\N$ with  $k\neq k'$, $(\psi_n^{(k)})_{n\in\N}$ and $(\psi_n^{(k')})_{n\in\N}$ are independent.
\item[$(A2)$] 
For all $k\in\N$, $\sup_{n\in\N} |\psi_{n+1}^{(k)}-\psi_n^{(k)}| \leq C$ for some $C>0$.
\item[$(A3)$] 
There exist $L\in\R$, $\delta>0$,  such that for all $\gep>0$, there is a $C' = C'(\gep)$ for which
\begin{equation}
\label{eq:A3}
\PP\left(\abs{\frac{\psi_n^{(k)}}{n}-L}>\gep\right)<\frac{C'}{n^\delta}.
\end{equation}
\end{itemize}
Then, for any cooling map $\tau$ satisfying~\eqref{coolingreg}, 
\begin{align}
\label{CoolingSum}
\lim_{n\to\infty}\frac{1}{n}\left(\sum_{k =1}^{\ell(n)-1}\psi_{T_k}^{(k)}
+\psi_{\bar{T}^n}^{\left(\ell(n)\right)}\right) = L \qquad \PP-a.s. 
\end{align} 
with $\ell(n)$ as in~\eqref{ell}, $T_k$ as in~\eqref{coolingreg} and $\bar{T}^n$ as in~\eqref{time_incr}.
\end{theorem}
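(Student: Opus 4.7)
\medskip\noindent\emph{Proof plan.}
My plan is to split the sum in~\eqref{CoolingSum} into a bulk contribution over the complete cooling blocks and a boundary contribution from the last, partial block, and to handle the bulk by a truncation plus variance-estimate scheme that exploits independence~(A1), the Lipschitz bound~(A2) and the polynomial concentration~(A3).

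Using $\sum_{k=1}^{\ell(n)-1} T_k + \bar T^n = n$, the centered sum rewrites as
\[
\frac{1}{n}\left(\sum_{k=1}^{\ell(n)-1}\psi_{T_k}^{(k)} + \psi_{\bar T^n}^{(\ell(n))}\right) - L
= \frac{U_{\ell(n)}}{n} + \frac{\widetilde W^n}{n},
\]
with $U_K:=\sum_{k=1}^{K-1}W_k$, $W_k:=\psi_{T_k}^{(k)}-L\,T_k$ and $\widetilde W^n:=\psi_{\bar T^n}^{(\ell(n))}-L\,\bar T^n$. By~(A1) the variables $(W_k)_{k\in\N}$ are independent, and by~(A2) one has $|W_k|\leq (C+|L|)\,T_k$ (modulo a harmless additive constant coming from $\psi_1^{(k)}$, which vanishes after dividing by $n$). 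Since $n\geq\tau(\ell(n)-1)$ and $\ell(n)\to\infty$ under~\eqref{coolingreg}, the main task reduces to showing $U_K/\tau(K-1)\to 0$ $\PP$-a.s.\ as $K\to\infty$; the boundary term is treated separately.

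For the bulk term I would write $W_k=T_k\,V_k$ with $V_k:=\psi_{T_k}^{(k)}/T_k-L$, fix $\gep>0$, and split $V_k=V_k\mathbbm{1}\{|V_k|\leq\gep\}+V_k\mathbbm{1}\{|V_k|>\gep\}$. The first part contributes at most $\gep\,\tau(K-1)$ to $|U_K|$, while the second is dominated by $(C+|L|)\,Q_K$ with $Q_K:=\sum_{k=1}^{K-1}T_k\,\mathbbm{1}\{|V_k|>\gep\}$, so the heart of the proof is the almost-sure statement $Q_K/\tau(K-1)\to 0$. Assumption~(A3) yields $\EE[Q_K]\leq C'(\gep)\sum_{k=1}^{K-1}T_k^{1-\delta}$, and a Toeplitz (weighted Ces\`aro) argument applied to the positive weights $T_k$ and the sequence $T_k^{-\delta}\to 0$ gives $\EE[Q_K]/\tau(K-1)\to 0$. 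Independence from~(A1) yields $\mathrm{Var}(Q_K)\leq C'(\gep)\sum_{k=1}^{K-1}T_k^{2-\delta}\leq C'(\gep)\,(\max_{j\leq K-1}T_j)^{(1-\delta)_+}\,\tau(K-1)$, which combined with $\tau(K-1)\geq\max_{j\leq K-1}T_j\to\infty$ gives $\mathrm{Var}(Q_K)/\tau(K-1)^2\to 0$. Chebyshev then provides convergence in probability; to upgrade to almost-sure convergence I would extract a deterministic subsequence $(K_m)$ with $\tau(K_m-1)\in[2^m,2^{m+1})$, so that $\tau(K_{m+1}-1)/\tau(K_m-1)$ is bounded, apply Borel--Cantelli to the Chebyshev bounds along this subsequence, and fill in between consecutive $K_m$ via the monotonicity of $K\mapsto Q_K$.

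The boundary term $\widetilde W^n/n$ I would split according to whether $\bar T^n\leq\sqrt n$---in which case the Lipschitz bound~(A2) alone gives $|\widetilde W^n|/n\leq (C+|L|)/\sqrt n\to 0$---or $\bar T^n>\sqrt n$, in which case $\bar T^n\to\infty$ with $n$ and~(A3) combined with Borel--Cantelli along a deterministic subsequence of $n$, together with a within-block Lipschitz interpolation using~(A2), gives $|\widetilde W^n|/n\leq\gep$ eventually $\PP$-a.s. The main obstacle is the almost-sure control of $Q_K/\tau(K-1)$: under the mere divergence assumption~\eqref{coolingreg}, $T_k$ may grow so slowly that $\sum_k\PP(|V_k|>\gep)$ diverges, so a direct Borel--Cantelli on $\{|V_k|>\gep\}$ fails. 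What saves the argument is that ``bad'' blocks carry the small weight $T_k/\tau(K-1)$, and the variance-plus-subsequence scheme sketched above is precisely the weighted analogue of Borel--Cantelli that is required to handle all cooling maps satisfying~\eqref{coolingreg} uniformly in the growth rate of $T_k$.
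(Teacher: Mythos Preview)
Your decomposition and truncation idea is natural and genuinely different from the paper's proof, which splits blocks according to whether $T_k\geq k^{\gamma}$ and then uses high $2N$-th moment bounds (iterated in a multi-layer scheme when $\delta$ is small) rather than a second-moment-plus-subsequence argument. However, the filling-in step for the bulk term has a real gap. You claim you can choose $K_m$ with $\tau(K_m-1)\in[2^m,2^{m+1})$ for every $m$, but under the mere hypothesis~\eqref{coolingreg} this is false: if some increment $T_K$ satisfies $T_K>\tau(K-1)$ then $\tau$ jumps over one or several dyadic intervals and the corresponding $K_m$ do not exist. In that situation consecutive elements of your subsequence are $K_m$ and $K_m+1$, and $Q_{K_m+1}/\tau(K_m-1)$ is controlled only if $\mathbbm{1}\{|V_{K_m}|>\gep\}=0$, which your variance argument does not give. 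Concretely, take $T_k=2^{2^k}$: then $\tau(K-1)\sim T_{K-1}$, so $Q_K/\tau(K-1)\sim \mathbbm{1}\{|V_{K-1}|>\gep\}$, and your ratio $\tau(K_{m+1}-1)/\tau(K_m-1)$ is of order $2^{2^{K_m-1}}$, not bounded. The result is still true in this example, but only via a direct Borel--Cantelli on the individual indicators (since $\sum_k\PP(|V_k|>\gep)\leq C'\sum_k T_k^{-\delta}<\infty$ here), which is a different mechanism from the one you describe. A repair is to first split off the ``dominant'' blocks with $T_k>\tau(k-1)$; these force $\tau$ to at least double, so they are geometrically sparse and $\sum_j T_{k_j}^{-\delta}<\infty$ lets you kill their indicators by Borel--Cantelli; on the remaining blocks $\tau(k)\leq 2\tau(k-1)$, every dyadic interval is hit, and your subsequence-and-monotonicity interpolation goes through.

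Your boundary argument is also too compressed to be convincing as stated. The cutoff $\bar T^n>\sqrt n$ only yields $\bar\gamma^n>n^{-1/2}$, which does not by itself force the relevant blocks to be sparse enough for a Borel--Cantelli over geometric subsequences of $n$ to sum. The paper instead restricts to $n$ with $\bar\gamma^n>\gep$, which forces $\tau(k)\geq(1+\gep)\tau(k-1)$ along the blocks that matter and hence exponential growth of $\tau(k_j)$; then a within-block geometric grid (this is exactly your Lipschitz interpolation idea) combined with~(A3) is summable. You should either adopt that stronger cutoff or explain precisely which deterministic subsequence of $n$ you use and why the sum of the~(A3) bounds over it is finite.
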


Theorem~\ref{CEThm} is useful for controlling limits of sums of the form appearing in~\eqref{CoolingSum}. 
Its proof is presented in Section~\ref{CETp} and is based on moments bounds and concentration estimates, 
applied to a further decomposition into what we call refreshed, boundary and deterministic terms, respectively. 
Theorem~\ref{CEThm} is a key ingredient in our paper. 

To check Assumption $(A3)$ is a challenge. In itself, $(A3)$ is only a mild decay requirement, but it 
forces us to derive concentration inequalities for RWRE, which is a non-trivial task. For the SLLN
in Theorem~\ref{SLLN}, the required concentration inequalities are already at our disposal, since 
they are encoded in the annealed large deviation bound recalled in Proposition~\ref{flatannLDbound}. However, 
for Theorem~\ref{thm:LDP} a concentration inequality for the cumulants of the displacements of
RWRE is needed which, to the best of our knowledge, is not available in the literature. Therefore
we state such a result, which is of interest in itself.
 
\begin{theorem}[{\bf Concentration of cumulants for RWRE displacements}]
\label{P:MB}
Suppose that $\mu$ is basic. Then, for any $\lambda\in\R$, $\delta \in (0,1)$ and $\gep>0$ 
there are $C,c \in (0,\infty)$ (depending on $\mu,\lambda,\delta,\gep$) such that
\begin{equation}
\label{e:MB}
\mu\prt{\omega \colon \abs{\frac{1}{n} \log E^{\omega}_0\crt{e^{\lambda Z_n}}
- \mc{I}^*(\lambda)}> \gep} 
\leq C \, e^{- c n^{1-\delta}} \qquad \forall\,n \in\N,
\end{equation}
where $\mc{I}^*$ is the Legendre transform of the rate function $\mc{I}$ in 
Proposition~\ref{staticLDPque}, i.e.,
\begin{equation}
\mc{I}^*(\lambda) = \sup_{x \in \R} \big[\gl x - \mc{I}(x)\big], \qquad \gl \in \R.
\end{equation}
\end{theorem}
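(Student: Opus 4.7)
The strategy is to reduce the concentration of the displacement cumulant to a concentration result for the hitting-time cumulant, where the natural independence structure of the increments $\tau_k := H_k - H_{k-1}$ of $H_n$ makes block-wise methods applicable. \textit{Step 1 (reduction to hitting times).} Writing $E^\go_0[e^{\gl Z_n}] = \sum_m e^{\gl m} P^\go_0(Z_n = m)$ and using $\{Z_n \geq m\} = \{H_m \leq n\}$, the cumulant $\tfrac{1}{n}\log E^\go_0[e^{\gl Z_n}]$ agrees, up to $\tfrac{1}{n}\log$-negligible polynomial prefactors, with $\sup_{m/n \in [-1,1]} [\,\gl m/n + \tfrac{1}{n}\log P^\go_0(H_m \leq n)\,]$. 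A G\"artner-Ellis-type argument converts stretched-exponential concentration of $\Lambda^H_m(\mu, \go) := \tfrac{1}{m}\log E^\go_0[e^{\mu H_m}]$ around $\mc{J}^*(\mu)$, uniformly in $\mu$ in a compact interval, into stretched-exponential concentration of $\tfrac{1}{m}\log P^\go_0(H_m \leq mt)$ around $-\mc{J}(t)$. Combined with the duality~\eqref{IJrel} and optimization in $m$, this yields~\eqref{e:MB}.

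\textit{Step 2 (Birkhoff-sum representation and truncation).} Under $P^\go_0$ the strong Markov property implies that the $\tau_k$ are independent and that the law of $\tau_k$ depends only on $\go(j)$ for $j \leq k-1$. Letting $\theta$ be the unit shift and $F_\mu(\go) := \log E^\go_0[e^{\mu H_1}]$, one obtains
\be{e:plan-BS}
\log E^\go_0\crt{e^{\mu H_n}} = \sum_{k=0}^{n-1} F_\mu(\theta^k \go),
\ee
a Birkhoff sum of a stationary functional whose annealed mean equals $\mc{J}^*(\mu)$ by Proposition~\ref{htRWRE} and the ergodic theorem. To localize the range of dependence one iterates the one-step continued-fraction identity
\be{e:plan-CF}
E^\go_0\crt{e^{\mu H_1}} = \frac{e^\mu \go(0)}{1 - e^\mu (1-\go(0))\,E^\go_{-1}\crt{e^{\mu H_0}}},
\ee
which, for $\mu$ in the interior of $\{\mu \colon \mc{J}^*(\mu) < \infty\}$ and under uniform ellipticity~\eqref{UEllip}, is a contraction with rate $\kappa(\mu) < 1$. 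The $R$-truncated functional $F_\mu^{(R)}$, depending only on $\go(-R),\dots,\go(-1)$, then satisfies $|F_\mu - F_\mu^{(R)}| \leq C\kappa^R$ except on an event of $\mu$-probability $\leq Ce^{-cR}$.

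\textit{Step 3 (block concentration).} Choosing $R = \lceil n^\delta \rceil$, the truncated sum $\sum_{k=0}^{n-1} F_\mu^{(R)}(\theta^k \go)$ is a bounded $R$-dependent stationary sum. Splitting into even- and odd-indexed blocks of length $R$ yields two sums of roughly $n/(2R)$ i.i.d.\ bounded random variables, so Hoeffding's inequality gives
\be{e:plan-Hf}
\mu\prt{\abs{\tfrac{1}{n}\sum_{k=0}^{n-1} F_\mu^{(R)}(\theta^k\go) - \langle F_\mu^{(R)}\rangle} > \gep} \leq C e^{-c\gep^2 n^{1-\delta}},
\ee
exactly the required stretched-exponential rate. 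Combined with the truncation bound from Step 2 and with $\langle F_\mu^{(R)}\rangle \to \mc{J}^*(\mu)$ as $R \to \infty$, this completes the concentration for the hitting-time cumulant.

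The technical heart of the argument lies in the contraction rate $\kappa(\mu)$ in~\eqref{e:plan-CF}: it degenerates as $\mu$ approaches the boundary of the finiteness domain of $\mc{J}^*$, whereas Step 1 demands uniform control over a compact $\mu$-interval dictated by $\gl$. Upgrading pointwise concentration in $\mu$ to uniformity will require a union bound at polynomial resolution combined with Lipschitz-in-$\mu$ estimates on $F_\mu$, both of which are delicate because of the continued-fraction structure. For very large $|\gl|$ one falls outside the useful range of $\mu$ altogether; there the cumulant is dominated by the ballistic events $\{Z_n = \pm n\}$ and concentration reduces to Hoeffding applied directly to $\sum_{i=0}^{n-1}\log \go(i)$ (respectively $\log(1-\go(i))$). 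The two regimes are then patched using the convexity and monotonicity of $\mc{I}^*$.
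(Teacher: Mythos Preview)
Your strategy splits naturally into two pieces, and they compare differently with the paper.

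\medskip
\textbf{Hitting-time concentration (your Steps 2--3) vs.\ the paper's Proposition~\ref{L:ZT}.}
Here your route is genuinely different. You exploit the product structure $\log E^\go_0[e^{\mu H_n}] = \sum_{k=0}^{n-1} F_\mu(\theta^k\go)$, truncate $F_\mu$ to a finite-range functional via the contraction in the continued fraction~\eqref{e:plan-CF}, and then apply Hoeffding on blocks of size $R=n^\delta$. The paper instead works with a truncated functional $g^{\delta,n}(\go) = \log E^\go_0[e^{\lambda H_n}\,\Ind{\{H_n<Kn\}}\Ind{\{N^n<n^{\delta/2}\}}]$, writes a Doob martingale in the environment coordinates, bounds the martingale differences by a derivative estimate $|\partial g^{\delta,n}/\partial\go_{x_i}| \leq \sqrt{K}\,n^{\delta/2}/\mathfrak{c}$, and applies Azuma--Hoeffding. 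Both yield the same stretched-exponential rate $e^{-cn^{1-\delta}}$. Your approach is perhaps more transparent in that it makes the one-dimensional structure (independence of hitting-time increments) explicit; the paper's approach is more robust in that it does not require $\mu$ to lie strictly inside the finiteness domain of $\mc{J}^*$, since the truncation on $\{H_n<Kn\}$ caps the moment generating function directly.

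\medskip
\textbf{Reduction to hitting times (your Step 1) vs.\ the paper's Sections~\ref{ss:block}--\ref{ss:halfhit}.}
Here you are close in spirit but too sketchy, and there is one imprecision. The identity $\{Z_n \geq m\} = \{H_m \leq n\}$ that you invoke is only an inclusion $\{Z_n \geq m\} \subset \{H_m \leq n\}$; the reverse fails because the walk may hit $m$ and then backtrack. This is why the paper's lower bound in Section~\ref{ss:halfhit} needs the more delicate two-sided estimate $P^\go_0(Z_n \geq un) \geq P^\go_0(n \leq H_{\lceil un\rceil+\lfloor dn\rfloor} \leq n+\lfloor dn\rfloor)$ followed by a tilting argument with an exposing hyperplane to show the tilted measure concentrates on the relevant ball. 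Your ``G\"artner--Ellis-type argument'' is exactly this tilting step, but carrying it out quantitatively (with stretched-exponential control on the exceptional set of environments) is the bulk of the work. The paper organises this via a block decomposition of $[-1,1]$ (Lemma~\ref{l:blocks}), a reduction from intervals to half-lines (Lemma~\ref{L:Dt}), and then the tilting (Lemma~\ref{L:*}); you would need a comparable structure, including the reflection argument for $Z_n<0$ that you do not mention. Your closing remarks about the boundary of the finiteness domain and the large-$|\gl|$ regime correctly identify where the difficulties lie, but as written they are acknowledged rather than resolved.
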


\noindent
The proof of Theorem~\ref{P:MB} is given in Section~\ref{Cumu}. The idea is to prove an analogous 
concentration inequality for the hitting times, and then transfer this to displacements via successive 
approximations. As is usual in the context of RWRE, hitting times are easier to handle and their 
concentration will follow from an adaptation of an argument presented in the proof of Lemma 3.4.10 in
Zeitouni~\cite{Z04}.


\subsection{Discussion} 
\label{discussion}

\paragraph{No-cooling: bounded time increments.}
The regime where assumption~\eqref{coolingreg} does not hold and the increments $T_k$ in~\eqref{coolingreg} 
are of order one has been investigated in~\cite{AdH17}. Due to the fast resampling, no trapping effects enter 
the game and full homogenisation takes place. In fact, the decomposition in~\eqref{X_dec} gives us a sum 
of almost i.i.d.\ random variables and the resulting behaviour is as if $X$ were a homogeneous Markov chain:     
\cite[Theorem 1.4]{AdH17} shows a corresponding classical SLLN and classical CLT under the annealed law.

\paragraph{Weak and strong law of large numbers.}
Theorem~\ref{SLLN} states that, as soon as the cooling is effective, i.e., assumption~\eqref{coolingreg} is in 
force, the asymptotic speed exists a.s., is deterministic and is equal to the one for RWRE. The same statement 
has been derived in weak form in~\cite[Theorem 1.5]{AdH17}. The strong form presented here requires a 
much more involved proof, based on RWRE concentration inequalities. In fact, Theorem~\ref{SLLN} is far
from trivial because the cooling map allows for fluctuations that could in principle hamper the almost sure 
convergence. As the proof reveals, the fact that this is  not the case comes from a non-trivial averaging 
due to the cooling resampling mechanism. Roughly speaking, the slower the cooling, the stronger are the 
fluctuations of the constituent pieces in the sum in~\eqref{X_dec}, but these fluctuations average out, 
as will be shown with the help of the moments bounds and the concentration estimates mentioned 
earlier.

\paragraph{Large deviations and fluctuations.}
As soon as the increments between the resampling times diverge, the rate function in the LDP for RWCRE 
in Theorem~\ref{thm:LDP} is the same as for RWRE. In words, the cost to deviate from the typical speed 
is determined by the trapping in a fixed environment and the resampling has \emph{no further homogenising 
effect}. This is true when we look on an exponential scale, but we may expect RWRE and RWCRE to show 
different large deviation behaviour when we zoom in on the flat piece $[0,v_\mu]$ when $v_\mu>0$ (see 
Fig.~\ref{fig:Irf}). Theorem~\ref{thm:LDP} deserves further comments because, when we look at fluctuations, 
RWRE and RWCRE actually give rise to \emph{different scaling limits}. This has been proved for recurrent 
RWRE, which exhibits non-standard fluctuations after scaling by $\log^2n$. Indeed,~\cite[Theorem 1.6]{AdH17} 
shows that, for certain $\tau$'s under the annealed law, RWCRE exhibits Gaussian fluctuations after scaling 
by a factor that \emph{grows faster than} $\log^2 n$ and \emph{depends on the cooling map} $\tau$. Similar 
scenarios, and even the presence of a crossover, have been conjectured to hold for RWCRE in other regimes 
(see Table~\ref{tab:a}). This may all sound paradoxical, because it is folklore to expect that the zeros of the 
rate function in an LDP encode information on the order of the fluctuations. However, the latter is only true 
when the rate function is smooth near its zeros. This is not the case for the rate functions in Fig.~\ref{fig:Irf}, 
and so the paradox is explained. 

\paragraph{Relaxing the i.i.d.\ assumption on $\mu$.}
It is worth mentioning that the i.i.d.\ assumption on $\mu$ made in \eqref{alpha} can in principle be relaxed to the 
assumption that $\mu$ is stationary and ergodic with respect to translations. The reader is invited to 
check that all the steps in the proofs below work in this more general setting. On the other hand, we 
will make use of certain known properties of RWRE some of which require further technical assumptions to guarantee
local product structure (see e.g.~\cite[Theorem 2.4.3, p.\ 236]{Z04} for the extension of Proposition~\ref{flatannLDbound}).

\paragraph{Comparison and open problems.}
We conclude by summarising our present understanding of RWCRE based on the results derived here 
and in~\cite{AdH17}. Let us stress again that the RWCRE model can be seen as a model that interpolates 
between the classical static model (i.e., $\tau(1) = \infty$) and the model with i.i.d.\ resamplings every unit
of time (i.e., $\tau(n) = n$). The latter reduces to a homogeneous nearest-neighbour random walk under the annealed 
measure, but even under the quenched law the independent space-time structure leads to a strong 
homogenising scenario for which e.g.\ a classical CLT holds (see e.g.~Boldrighini, Minlos and 
Pellegrinotti~\cite{BMP04}). The interesting features therefore appear as we explore different cooling 
regimes, which allow for a competition between the effect of traps in the static environment and the 
effect of homogenisation coming from the resampling. Table~\ref{tab:a} gives a qualitative comparison 
for RWRE, RWCRE and standard homogeneous nearest-neighbour random walk, abbreviated as RW. 
In view of the discussion above, the no-cooling regime $\tau(n)\sim n$ is in the same ``universality 
class'' as homogeneous random walk, which is why it is put in the same column as RW.

\begin{table}[!htbp]
\label{table}
\vspace{.2cm}
\small
\centering
\begin{tabular}{|c|c|c|c|}
\cline{1-4}
\cline{2-4} Model & RW $\simeq$ No-Cooling &RWCRE &RWRE\\ \hline
\cline{2-4} Medium
&Homogeneous &Cooling &Static\\ \hline \hline
Recurrence & local drift $=0$ &{\bf global, depending on $(\tau,\mu)$?}  
& $\langle\log\rho\rangle=0$, global \\ \hline 
Speed &local drift &$v_{\mu}$ (non-local) &$v_{\mu}$ (non-local)\\ \hline 
LDP rate $n$ &Cram\'er-analytic rate fn& non-analytic rate fn $\mathcal{I}$ 
& non-analytic rate fn $\mathcal{I}$ \\\hline 
Fluctuations & \multirow{5}{2em}{CLT} &$\log\tau(n)\leq cn:$ Gaussian & Kesten-Sinai \\
$\langle\log\rho\rangle=0$ &  &{\bf $\log\tau(n)\geq cn$: Kesten-Sinai?} 
&scale: $\log^2 n$\\
\cline{1-1}\cline{3-4}
Fluctuations &   &
& Kesten-Kozlov-Spitzer\\
\multirow{2}{4.8em}{$\langle\log\rho\rangle<0$} & & {\bf ???}  &$s<2$ stable law\\
&&& $s>2$ CLT\\ \hline
\end{tabular}
\caption{Comparison among standard RW, RWCRE and RWRE.  Marked in boldface are 
what we consider challenging open problems.}
\label{tab:a}
\normalsize
\end{table}

\noindent
Let us comments on the most relevant items in Table~\ref{tab:a}.

\begin{itemize}

\item\emph{Recurrence vs Transience:}
While for a homogenous RW we know that it is recurrent if and only if the corresponding \emph{local} 
drift is zero, for RWRE the recurrence criterion is encoded in the condition $\langle\log\rho\rangle = 0$ 
(recall Proposition~\ref{prop:LLN}). In particular, it can happen that the local drift is non-zero, but still
the above condition holds and the random walk is recurrent. In fact, a random walk in a non-homogeneous 
environment builds up non-negligible correlations over time, and its long-time behaviour is a truly 
\emph{global} feature. For RWCRE we expect some subtle surprises related to the fluctuations of 
the corresponding RWRE. In particular, we expect a non-local criterion as for RWRE, but controlled 
by a delicate interplay between the environment law $\mu$ and the cooling map $\tau$. 

\item\emph{Asymptotic Speed:}
As for the recurrence criterion, the asymptotic speed of a homogeneous RW is given by its local drift, 
while for RWRE it is influenced by the presence of the traps. Theorem~\ref{SLLN} shows that for any 
cooling map subject to~\eqref{coolingreg}, RWCRE has the same speed as RWRE. In other words, on 
scale $1/n$ the long-time behaviours in the two models are equivalent and emerge as global features.

\item\emph{Large Deviations:}
Concerning large deviations of order $n$, for homogeneous RW displacements Cram\'er's theorem tells 
us that their probabilities decay exponentially fast and are determined by a smooth rate function (see 
e.g.~\cite[Chapter I]{dH00}). On the other hand, as we saw in Section~\ref{RWRE}, large deviations 
for RWRE are drastically different, both under the quenched and the annealed measure. In particular, 
both rate functions are non-analytic, not strictly convex when $\langle\log\rho\rangle \neq 0$, and 
contain an interval of zeros when $v_\mu>0$. As previously discussed, Theorem~\ref{thm:LDP} says 
that RWCRE satisfies the same LDP at rate $n$ under the quenched law. Still, we may expect differences 
between quenched large deviations for RWRE and RWCRE when zooming in on the flat piece, i.e., 
when considering decays that are slower than the rate $n$ encoded in the LDP. This constitutes
yet another interesting open problem. Let us further note that we have not looked at the annealed 
LDP for RWCRE. We expect no surprises, namely, we believe that the annealed rate function for
RWCRE is the same as the one for RWRE in Proposition~\ref{annLDP}. In fact, the proof presented 
in Section~\ref{LDPproof} could be easily adapted (and even significantly simplified) if we had existence 
and convexity in the annealed setting. In the quenched setting, existence and convexity are guaranteed 
by a general result derived in~Campos \emph{et.\ al}~\cite{CDRRS13}. 

\item\emph{Fluctuations:}
We conclude with what we consider to be the most challenging open problem, namely, to characterise the
fluctuations for RWCRE, both under the quenched and the annealed measure. Some noteworthy 
results in this direction were derived in~\cite{AdH17}, where an annealed CLT for the no-cooling 
regime was shown~\cite[Theorem 1.4]{AdH17} and, for $\langle\log\rho\rangle=0$ and $\tau$ growing 
either polynomially or exponentially, the annealed centered RWCRE displacement was shown to converge 
to a Gaussian law after an appropriate scaling that depends on $\tau$~\cite[Theorem 1.6]{AdH17}. We 
expect that for sufficiently fast cooling a crossover occurs when $\langle\log\rho\rangle=0$, namely, we 
expect to see the Kesten~\cite{K86} limit law just as in the static case. What happens when $\langle\log
\rho\rangle\neq 0$ seems to be even more intricate and remains fully unexplored. In this case for 
RWRE, Kesten, Kozlov and Spitzer~\cite{KKS75} proved that annealed fluctuations can be Gaussian 
or can be characterised by proper stable law distributions, depending on the value of the root $s \in 
(1,\infty)$ defined in Proposition~\ref{flatannLDbound}. It is reasonable to expect a rich pallet of 
behaviour depending on the interplay between $\tau$ and $s$. The quenched fluctuations seem 
even more difficult to analyse in view of the corresponding more delicate results for RWRE 
(see Zeitouni~\cite{Z04}).
\end{itemize}


\section{Cooling ergodic theorem} 
\label{CETp}

In this section we prove Theorem~\ref{CEThm}.  

\begin{proof}
We represent the sum in~\eqref{CoolingSum} as the convex combination
\begin{equation} 
\label{convex_combi}
\begin{aligned}
&\frac{1}{n} \left( \sum_{k=1}^{\ell(n)-1} \psi_{T_k}^{(k)} + \psi_{\bar{T}^n}^{\left( \ell(n) \right)} \right) 
= \sum_{k=1}^{\ell(n)-1} \frac{T_k}{n} \, \frac{\psi_{T_k}^{(k)}}{T_k} 
+ \frac{\bar{T}^n}{n} \, \frac{\psi_{\bar{T}^n}^{\left(\ell(n)\right)}}{\bar{T}^n},
\end{aligned}
\end{equation}
and use the abbreviations
\begin{equation} 
\label{gamma_phi}
\gamma_{k,n} = \frac{T_k}{n} \Ind{\{k \leq \ell(n)-1\}}, \quad \bar\gamma^n = \frac{\bar{T}^n}{n}.
\end{equation} 
To prove~\eqref{CoolingSum}, we subtract $L$ from~\eqref{convex_combi} and center each 
term in~\eqref{convex_combi}:
\begin{equation} 
\label{RBD}
\begin{aligned}
&\sum_{k\in\N} \gamma_{k,n} \frac{\psi^{(k)}_{T_k}}{T_k} 
+ \bar{\gamma}^n \frac{\psi^{\left( \ell(n) \right)}_{\bar{T}^n}}{\bar{T}^n} - L\\
&= \underbrace{\Bigg(\sum_{k \in \N} \gamma_{k,n} \mc{C}_k\Bigg)} + \underbrace{{\color{white} 
\Bigg(} \bar{\gamma}^n \bar{\mc{C}}^n{\color{white}\Bigg)}}  
+ \underbrace{\Bigg(\sum_{k \in \N} \gamma_{k,n} (L_k - L) + \bar{\gamma}^n (\bar{L}^n - L)\Bigg)}\\
&= \quad\,\,\quad R_n \;\;\;\;\quad + \;\;\quad B_n \;\;\;\; + \;\;\;\qquad\qquad\qquad D_n
\end{aligned}
\end{equation}
where 
\begin{equation} 
\label{GeneralCentering}
\mc{C}_k = \frac{\psi^{(k)}_{T_k}}{T_k} - L_k, \quad \bar{\mc{C}}^n 
= \frac{\psi^{\left( \ell(n) \right)}_{\bar{T}^n}}{\bar{T}^n} - \bar{L}^n, \qquad
L_k = \EE\crt{\frac{\psi^{(k)}_{T_k}}{T_k}}, \quad 
\bar{L}^n = \EE\crt{\frac{\psi^{\left( \ell(n) \right)}_{\bar{T}^n}}{\bar{T}^n}}.
\end{equation}
The terms $R_n$, $B_n$ and $D_n$ correspond to \emph{refreshed}, \emph{boundary} 
and \emph{deterministic} increments, respectively. In Sections~\ref{Rn},~\ref{Bn} and~\ref{Dn} 
we treat each of these terms separately, and show that they are asymptotically vanishing. 

\subsection{Refreshed term $R_n$ } 
\label{Rn}

In this section we show that 
\begin{equation} 
\label{R_goal}
\limsup_{n\to\infty} \abs{R_n} = 0 \quad \PP-\text{a.s.}
\end{equation}
In view of Assumption $(A3)$, we split the increments of the resampling times according to 
a growth parameter $\gamma > 0$ such that $\gamma \delta > 1$,
\begin{equation} 
\label{R_split}
\begin{aligned}
\sum_{k\in\N} \gamma_{k,n} \mc{C}_k 
&= \underbrace{\sum_{k\in\N} \gamma_{k,n} \mc{C}_k\mathbbm{1}_{\{ T_k \geq k^\gamma \}}}  
+ \underbrace{\sum_{k\in\N} \gamma_{k,n} \mc{C}_k \mathbbm{1}_{\{ T_k < k^\gamma\}}}\\
&= \quad\;\;\qquad R_n^L \;\;\quad\qquad + \quad\quad\quad R_n^S,
\end{aligned}
\end{equation}
which corresponds to the sum of large and small increments, respectively.  The goal is to  
bound both $\limsup_{n\to\infty} |R_n^L|$ and $\limsup_{n\to\infty} |R_n^S|$.

We first treat the sum of large increments $R_n^L$. By Assumption $(A3)$, if $\gamma \delta > 1$,
then
\begin{equation}
\label{R_BC}
\sum_{k\in\N} \PP\prt{|\mc{C}_k| \Ind{\chv{T_k \geq k^{\gamma}}} > \gep} <\infty.
\end{equation}
Applying the Borel-Cantelli lemma, we get
\begin{equation} 
\label{BC_L}
\limsup_{k\to\infty} \mc{C}_k \Ind{\chv{T_k \geq k^\gamma}} \leq \gep \qquad \PP-\text{a.s.}
\end{equation}
Since $\lim_{n\to\infty} \gamma_{k,n} = 0$ for fixed $k \in \N$ and $\sum_{k\in\N} \gamma_{k,n} 
\leq 1$, we obtain
\begin{equation} 
\label{R_L_eps}
\limsup_{n\to\infty} |R_n^L| \leq \gep \qquad \PP-\text{a.s.}
\end{equation}

To deal with the sum of small  increments $R_n^S$, we apply the Markov inequality:
\begin{equation} 
\label{e:S_Markov}
\PP\prt{|R_n^S| > \gep} \leq \frac{1}{\varepsilon^{2N}} \, 
\EE\crt{\prt{\sum_{k=1}^n \gamma^S_{k,n} \mc{C}_k}^{2N}}, 
\qquad \gga^S_{k,n} = \gga_{k,n} \mathbbm{1}_{\{ T_k <  k^\gamma \}}.
\end{equation}
Since the $\mc{C}_k$'s are independent, zero-mean and bounded random variables, when we expand the 
$2N$-th power, all terms with first moment disappear.  Therefore, we can estimate the moments 
as
\begin{equation} 
\label{S_2N}  
\begin{aligned}
&\EE\crt{\prt{\sum_{k=1}^n \gamma^S_{k,n} \mc{C}_k}^{2N}}\\ 
&= \sum_{m=1}^{2N} \underset{p_1,\cdots,p_m\geq 1}{\sum_{p_1+\cdots+p_m=2N}} {2N\choose p_1 \cdots p_m} 
\sum_{k_1>\ldots>k_m} \EE\crt{\prt{\gamma^S_{k_1,n} \mc{C}_{k_1}}^{p_1} 
\times \cdots \times \prt{\gamma^S_{k_m,n} \mc{C}_{k_m}}^{p_m}}\\
&\leq \sum_{m=1}^{N} \underset{p_1,\cdots,p_m\geq 2}{\sum_{p_1+\cdots+p_m=2N}} {2N\choose p_1 \cdots p_m} 
\sum_{k_1>\ldots>k_m} \prt{C \, \frac{k_1^\gamma}{n}}^{p_1} 
\times \cdots \times \prt{C \, \frac{k_m^\gamma}{n}}^{p_m}\\
&\leq \sum_{m=1}^{N} {2 N - m - 1 \choose m - 1} \, n^m \, C^{2N} \, n^{2 N (\gamma - 1)} \leq c_N n^{N(2 \gamma -1)},
\end{aligned}
\end{equation}
where in the third line we use independence (Assumption (A1)) and bound each $\gamma^S_{k_i,n} 
\mc{C}_{k_i}$ by $C \, k_i^\gamma \, n^{-1}$ (Assumption (A2)). The right-hand side of~\eqref{S_2N} 
is summable in $n$ as long as $\gamma < \tfrac12$, because we can choose $N$ arbitrarily large. 
This suggests that we need to further separate the argument.

\paragraph{Case $\delta>2$: single split.} 

If $\delta > 2$, then we can pick $\gamma < \tfrac12$.  From~\eqref{e:S_Markov} and~\eqref{S_2N}, 
we obtain
\begin{equation} 
\label{e:BCS}
\sum_{n\in\N} \PP\prt{|R_n^S| > \gep} < \infty.
\end{equation}
Hence, by the Borel-Cantelli lemma,
\begin{equation} 
\label{R_S_eps}
\limsup_{n\to\infty} |R_n^S| \leq \gep \qquad \PP-\text{a.s.}
\end{equation}
Combining~\eqref{R_split},~\eqref{R_L_eps} and~\eqref{R_S_eps}, we get that for $\delta > 2$,
\begin{equation} 
\label{R_0_bound}
\limsup_{n\to\infty} |R_n| \leq 2\gep \qquad \PP-\text{a.s.}
\end{equation}

\paragraph{Case $\delta <2$:  multi-layer split.} 
\label{Rs3}

If $\delta < 2$, then we must pick $\gamma > \tfrac12$ to satisfy $\gamma \delta > 1$.  Here we 
can no longer use the previous argument to obtain~\eqref{e:BCS}. To overcome this difficulty, we 
implement a multi-layer scheme distinguishing small and large increments according to a growth 
parameter. Take $M \in \N$ such that $\frac{M}{3} \delta > 1$.  Similarly to~\eqref{R_split}, define 
the first split: 
\begin{equation} 
\label{R_+_1split}
\begin{aligned}
R_n = \sum_{k\in\N} \gamma_{k,n} \mc{C}_k 
&= \underbrace{\sum_{k\in\N} \gamma^{1,L}_{k,n} \mc{C}_k} 
+ \underbrace{\sum_{k\in\N} \gamma^{1,S}_{k,n} \mc{C}_k}\\
&= \;\;\quad R_n^{1,L} \quad + \;\;\quad R_n^{1,S},
\end{aligned}
\end{equation}
where
\begin{equation} 
\label{gamma_SL_1}
\gamma^{1,S}_{k,n} = \gamma_{k,n} \Ind{\chv{T_k < k^{1/3}}}, \qquad 
\gamma^{1,L}_{k,n} = \gamma_{k,n} \Ind{\chv{T_k \geq k^{1/3}}}. 
\end{equation}

\medskip\noindent
{\bf 1.}
To estimate $R^{1,S}_n$, as in~\eqref{e:S_Markov} and~\eqref{S_2N} we apply the Markov 
inequality, estimate the moments and obtain
\begin{equation} 
\label{R_P_1S_bound}
\PP\prt{|R_n^{1,S}| > \gep} \leq c_N n^{-N/3}.
\end{equation}
Since we can choose $N > 3$ in~\eqref{R_P_1S_bound}, we conclude that $\PP(|R_n^{1,S}| > \gep)$ 
is summable in $n$ and therefore, by the Borel-Cantelli lemma,
\begin{equation} 
\label{R_+_1S_bound}
\limsup_{n\to\infty} |R_n^{1,S}| \leq \gep \qquad \PP-\text{a.s.}
\end{equation}

\begin{center}
\begin{table}[htbp]
\begin{tabular}{|ccl|c|c|}
\hline
\multicolumn{3}{|c|}{ Multi-layer Split}  &  Moments  & Concentration\\ \hline
$R_n=$  &  $\underline{R_n^{1,L}} +$  &  $R_n^{1,S}$  & $\limsup_n R_n^{1,S}\leq \gep$  &\\
&  $\downarrow$ \hspace{.2cm}  &    &    &\\
&  $R_n^{1,L}=$  &  $\underline{R_n^{2,L}} + R_n^{2,S}$  &  $\limsup_n R_n^{2,S} \leq \gep$  &\\
&    &  \quad $\vdots$  &  $\vdots$ \hspace{2cm}  &\\
&    &  $R_n^{M-1,L} =$ $\boxed{R_n^{M,L}} + R_n^{M,S}$  &  $\limsup_n R_n^{M,S} \leq \gep$  
&  $\limsup_n R_n^{M,L} \leq \gep$\\
\hline
\end{tabular}
\caption{Splitting scheme.}
\label{tab:split}
\end{table}
\end{center}

\medskip\noindent
{\bf 2.} 
To estimate $R_n^{1,L}$, the idea is to it decompose iteratively, as we did with $R_n$ in~\eqref{R_+_1split}, 
and control the small increments with moment bounds until we can apply concentration estimates. The 
resulting scheme is summarised in Table~\ref{tab:split}.

\medskip\noindent
{\bf 2a.}
To build the second split, we relabel the terms in $R_n^{1,L}$, i.e., we choose an ordered subsequence 
$(k^1_j)_{j\in\N}$ such that
\begin{equation} 
\label{subseq_1}
\{ k^1_1,k^1_2,\ldots \} = \{ j \in \N \colon T_j \geq j^{1/3} \}. 
\end{equation}
Denoting by $J(1;n)$ the cardinality of $\{k^1_j\colon \tau(k_j^1) \leq n\}$, we define the second split:
\begin{equation} 
\label{R_+_2split}
\begin{aligned}
R_n^{1,L} = \sum_{j=1}^{J(1;n)} \gamma_{k^1_j,n} \mc{C}_{k^1_j} 
&= \underbrace{\sum_{j=1}^{J(1;n)} \gamma_{k^1_j,n}^{2,L} \mc{C}_{k^1_j}} 
+ \underbrace{\sum_{j=1}^{J(1;n)} \gamma_{k^1_j,n}^{2,S} \mc{C}_{k^1_j}}\\
&= \;\;\;\;\quad R_n^{2,L} \;\;\;\quad + \;\;\;\;\quad R_n^{2,S},
\end{aligned}
\end{equation}
where
\begin{equation} 
\label{gamma_SL_2}
\gamma^{2,S}_{k^1_j,n} = \gamma_{k^1_j,n} \Ind{\chv{T_{k^1_j} < j^{2/3}}}, 
\qquad \gamma^{2,L}_{k^1_j,n} = \gamma_{k^1_j,n} \Ind{\chv{T_{k^1_j} \geq j^{2/3}}}. 
\end{equation}
Next, we abbreviate $n(1;J) = \inf\chv{n \colon J(1;n) = J}$.  Then, since
\begin{equation} 
\label{n_J}
\limsup_{n\to\infty} |R_n^{2,S}| = \limsup_{J\to\infty} |R_{n(1;J)}^{2,S}|,
\end{equation}
it suffices to show that $\limsup_{J\to\infty}  |R_{n(1;J)}^{2,S}| \leq \gep$ $\PP$-a.s. Note that, 
since $T_{k^1_j} \geq j^{1/3}$, we have a lower bound on $n(1;J)$:
\begin{equation} 
\label{n1J}
n(1;J) \geq \sum_{j=1}^{J} T_{k^1_j}  \geq \sum_{j=1}^{J} j^{1/3} \geq c \, J^{4/3},
\end{equation}
which yields
\begin{equation} 
\label{gam_est}
\gamma^{2,S}_{k^1_j,n(1;J)} \leq \frac{j^{2/3}}{c J^{4/3}}\leq \frac{1}{c J^{2/3}}.
\end{equation}
Similarly to the first split, we apply the Markov inequality
\begin{equation} 
\label{2S_Markov}
\PP\prt{\abs{R_{n(1;J)}^{2,S}} > \gep} \leq \frac{1}{\varepsilon^{2N}} \, 
\EE\crt{\prt{\sum_{j=1}^{J} \gamma^{2,S}_{k^1_j,n} \mc{C}_{k^1_j}}^{2N}},
\end{equation}
and estimate moments
\begin{equation} 
\label{2S_2N_est}
\begin{aligned}
&\EE\crt{\prt{\sum_{j=1}^{J} \gamma^{2,S}_{k^1_j,n(1;J)} \mc{C}_{k^1_j}}^{2N}}
\leq c_N \, J^{- N/3}.
\end{aligned}
\end{equation}
Once we choose $N > 3$, this becomes summable in $J$. Therefore, by the Borel-Cantelli lemma 
and~\eqref{n_J}, we obtain
\begin{equation}
\label{R_+_2S_bound}
\limsup_{n\to\infty} |R_{n}^{2,S}| \leq \gep \quad \PP-\text{a.s.}
\end{equation}

\medskip\noindent
{\bf 2b.}
We continue the induction steps. For any $i < M$, after bounding $\limsup_{n\to\infty} |R_n^{i,S}|$, 
we relabel the terms in $R_n^{i,L}$ and define
\begin{equation} 
\label{subseq_i}
\{ k^{i}_1,k^{i}_2,\ldots \} = \{ j \in \N \colon T_{k^{i-1}_j} \geq j^{i/3} \}. 
\end{equation}
Denoting by $J(i;n)$ the cardinality of $\{k^{i}_j\colon \tau(k^i_j) \leq n\}$, we define the $(i+1)$-st split:
\begin{equation} 
\label{R_+_(i+1)split}
\begin{aligned}
R_n^{i,L} = \sum_{j=1}^{J(i;n)} \gamma_{k^{i}_j,n} \mc{C}_{k^i_j} 
&= \underbrace{\sum_{j=1}^{J(i;n)} \gamma_{k^i_j,n}^{i+1,L} \mc{C}_{k^i_j}} 
+ \underbrace{\sum_{j=1}^{J(i;n)} \gamma_{k^i_j,n}^{i+1,S} \mc{C}_{k^i_j}}\\
&= \;\;\;\;\quad R_n^{i+1,L} \;\quad + \,\;\;\;\;\quad R_n^{i+1,S},
\end{aligned}
\end{equation}
where
\begin{equation} 
\label{gamma_SL_i}
\gamma^{i+1,S}_{k^{i}_j,n} = \gamma_{k^{i}_j,n} \Ind{\chv{T_{k^i_j} < j^{(i+1)/3}}}, 
\qquad \gamma^{i+1,L}_{k^i_j,n} = \gamma_{k^i_j,n} \Ind{\chv{T_{k^i_j} \geq j^{(i+1)/3}}}. 
\end{equation}
Let $n(i;J) = \inf\chv{n \colon  J(i;n) = J}$. Then, by a similar computation as in~\eqref{n1J} and 
\eqref{gam_est}, we have the following bounds:
\begin{equation}
n(i;J) \geq c \, J^{1+i/3}, \qquad \gamma^{i+1,S}_{k^i_j,n(i;J)} \leq \frac{1}{c J^{2/3}}.
\end{equation}
Using the Markov inequality and moments bounds, we obtain
\begin{equation}
\sum_{J\in\N} \PP\prt{\abs{R_{n(i;J)}^{i+1,S}} > \gep} < \infty.
\end{equation}
Therefore we conclude that
\begin{equation}
\label{R_+_(i+1)_bound}
\limsup_{n\to\infty} |R_{n}^{i+1,S}| \leq \gep \quad \PP-\text{a.s.}
\end{equation}

\medskip\noindent
{\bf 2c.}
Once we bound $\limsup_{n\to\infty} |R^{M,S}_n|$, we are left with the term $R_n^{M,L}$.  Since
\begin{equation}
R_{n}^{M,L} = \sum_{j\in\N} \gamma_{k^M_j,n}\Ind{\{T_{k^M_j> j^{M/3}}\}} \mc{C}_{k^M_j}
\end{equation}
and $\frac{M}{3} \delta > 1$, we apply Assumption $(A3)$ to obtain 
\begin{equation}
\sum_{j\in\N} \PP\prt{|\mc{C}_{k^M_j}|\Ind{\{T_{k^M_j> j^{M/3}}\}} > \gep} < \infty.
\end{equation}
Hence, by the Borel-Cantelli lemma,
\begin{equation} 
\label{BC_+_L}
\limsup_{j\to\infty} |\mc{C}_{k^{M-1}_j}|\Ind{\chv{T_{k^{M-1}_j > j^{M/3}}}} 
\leq \gep \qquad \PP-\text{a.s.}
\end{equation}
Since $\lim_{n\to\infty} \gamma_{k,n} = 0$ for fixed $k$ and $\sum_{k\in\N} \gamma_{k,n} \leq 1$, 
we obtain
\begin{equation} 
\label{R_+_M_L_bound}
\limsup_{n\to\infty} |R_n^{M,L}| \leq \gep \qquad \PP-\text{a.s.}
\end{equation}

\medskip\noindent
{\bf 3.}
Combining~\eqref{R_+_M_L_bound} and~\eqref{R_+_(i+1)_bound} for $i \in \chv{0,\ldots,M-1}$, 
we conclude that
\begin{equation} 
\label{R_+_bound}
\limsup_{n\to\infty} |R_n| \leq (M+1) \gep \qquad \PP-\text{a.s.}
\end{equation}
and~\eqref{R_goal} follows since $\gep > 0$ is arbitrary.

\subsection{Boundary term $B_n$} 
\label{Bn}

We next show that 
\begin{equation} 
\label{B_goal}
\limsup_{n\to\infty} \abs{B_n} = 0 \qquad \PP-\text{a.s.}
\end{equation}
Let $V_k = \sup\chv{\bar{\gga}_n |\bar{\mc{C}}^n| \colon n \in I_k}$. Because $\cup_{k\in\N} I_k \supset \N_0$, 
we have $\limsup_{n\to\infty} \bar{\gga}_n |\bar{\mc{C}}^n| = \limsup_{k\to\infty} V_k$.  It therefore 
suffices to show that for arbitrary $\gep > 0$,
\begin{equation} 
\label{Bgoal0}
\limsup_{k\to\infty} V_k \leq \gep \qquad \PP-\text{a.s.}
\end{equation}
If $\bar{\gamma}^n \leq \gep$, then using Assumption $(A2)$ we can bound 
$|B_n| \leq C \, \gep$.  Therefore we only need to consider $\bar{\gamma}^n > \gep$
(see Fig.~\ref{fig:gamma_behav*}), in which case we see that
\begin{equation}
\label{danger}
n > \frac{\tau(k-1)}{1 - \gep} = N_{k,\gep}.
\end{equation}

\begin{figure}[htbp]
\centering  
\includegraphics[clip, trim=3.5cm 11cm 2.5cm 14cm, width=1\textwidth]{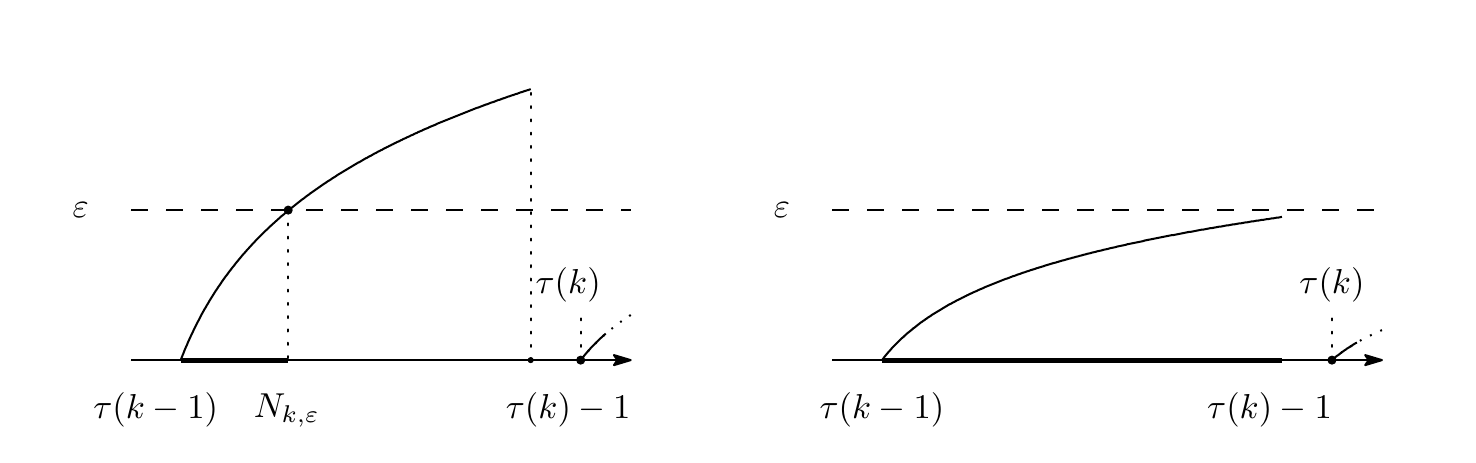}
\caption{\emph{Left:} If $N_{k,\gep} < \tau(k)$, then $\bar{\gga}_n \leq \gep$ for 
$n \in [\tau(k-1),N_{k,\gep})$ and $\bar{\gga}_n > \gep$ for $n \in [N_{k,\gep},\tau(k))$. 
\emph{Right:} If $N_{k,\gep} \geq \tau(k)$, then $\bar{\gga}_n \leq \gep$ for all 
$n \in I_k$.}
\label{fig:gamma_behav*}
\end{figure}

If $\tau(k)\leq N_{k,\gep}$, then the interval $I_k$ can be ignored. Defining
\begin{equation}
\chv{k_1,k_2,\ldots} = \chv{k\in \N\colon\, \tau(k)\geq N_{k,\gep}},
\end{equation}
our task reduces to showing that
\begin{equation} 
\label{Bgoal1}
\limsup_{j\to\infty} V_{k_j} \leq C \gep \qquad \PP-\text{a.s.}
\end{equation}
Note that the subsequence $\prt{\tau(k_j)}_{j\in\N}$ grows at least exponentially fast once
\begin{equation} 
\label{growth}
\tau(k_j) \geq N_{k_j,\gep} > (1 + \gep) \tau(k_j-1) \geq (1 + \gep) \tau(k_{j-1}) 
\geq (1 + \gep)^{j-1} \tau(k_1).
\end{equation}
Since $|B_n| \leq C \gep$ for $n \in [\tau(k_j-1),N_{k_j,\gep})$ and $\bar{\gamma}^n \leq 1$, 
by letting $m = \bar{T}^n = n - \tau(k_j -1)$ and noting that $(1+\gep)\tau(k_j-1)\leq N_{k,\gep}$, 
we obtain 
\begin{equation} 
\label{Vpre}
\begin{aligned}
\PP\prt{V_{k_j} > C \gep} &= \PP\prt{\sup_{N_{k,\gep}\leq n<\tau(k_j)} 
\bar{\gga}_n |\bar{C}^n| > C \gep}\leq \PP\prt{\sup_{N_{k,\gep}\leq n<\tau(k_j)}  
|\bar{C}^n| > C \gep}\\
&\leq \PP\prt{\sup_{\gep\tau(k_j-1)\leq m<T_{k_j}} \left| \frac{\psi^{\left(k_j\right)}_m}{m} 
- \EE\crt{\frac{\psi^{\left(k_j\right)}_m}{m}} \right| > C \gep}.
\end{aligned}
\end{equation}
By the union bound applied to~\eqref{Vpre}, we arrive at
\begin{equation} 
\label{Vpre_est} 
\PP\prt{V_{k_j} > C \gep}\leq \sum_{m=\gep\tau(k_j-1)}^{T_{k_j}} 
\PP\prt{\left| \frac{\psi^{\left(k_j\right)}_m}{m} 
- \EE\crt{\frac{\psi^{\left(k_j\right)}_m}{m}} \right| > C \gep}.
\end{equation}
By Assumption $(A3)$, $\lim_{m\to\infty} \EE[\psi^{\left(k\right)}_m/m] = L$ uniformly in $k$, 
and in particular, from~\eqref{Vpre_est}, we get
\begin{equation} 
\label{B_est}
\begin{aligned}
\PP\prt{V_{k_j} > C \gep} 
&\leq \sum_{m=\gep\tau(k_j-1)}^{T_{k_j}}\frac{\widetilde{C}}{m^{\delta}},
\end{aligned}
\end{equation}
for some $\widetilde{C}>0$ not depending on $k_j$.

\paragraph{Case $\delta > 1$.}

From~\eqref{B_est} we see that
\begin{equation} 
\label{Best}
\begin{aligned}
\PP\prt{V_{k_j} > C \gep} &\leq \frac{C''}{\gep\tau(k_j-1)^{\delta - 1}}.
\end{aligned}
\end{equation}
If $\gd > 1$, then together with~\eqref{growth} this implies that~\eqref{Best} is summable in $j$. 
Hence, by the Borel-Cantelli lemma, we obtain
\begin{equation} 
\label{B_bound}
\limsup_{j\to\infty} V_{k_j} \leq C \gep \qquad  \PP-\text{a.s.}
\end{equation} 

\paragraph{Case $\delta < 1$.}

We need a more refined argument to prove~\eqref{B_goal}. To control the boundary term on the 
interval $I_{k_j}$, we construct a sequence of times $\prt{J_i}_{i\in \N_0}$ such that $J_0 = \gep 
\tau(k_j-1)$ and $J_i = (1 + \gep) J_{i-1}$.  For $m \in (J_i,J_{i+1})$,
\begin{equation}
\abs{\frac{\psi^{(k_j)}_m}{m} - \frac{\psi^{(k_j)}_{J_i}}{J_i}} 
\leq \frac{1}{J_i} \abs{\psi^{(k_j)}_m - \psi^{(k_j)}_{J_i}} \leq C \gep,
\end{equation}
where we use Assumption $(A2)$ in the last inequality. Hence
\begin{equation}
\begin{aligned}
\abs{\frac{\psi^{(k_j)}_m}{m} - \EE\crt{\frac{\psi^{(k_j)}_m}{m}}}
&\leq \abs{\frac{\psi^{(k_j)}_m}{m} - \frac{\psi^{(k_j)}_{J_i}}{J_i}} 
+ \abs{\frac{\psi^{(k_j)}_{J_i}}{J_i} - \EE\crt{\frac{\psi^{(k_j)}_{J_i}}{J_i}}} 
+ \abs{\EE\crt{\frac{\psi^{(k_j)}_m}{m} - \frac{\psi^{(k_j)}_{J_i}}{J_i}}}\\
&\leq \abs{\frac{\psi^{(k_j)}_{J_i}}{J_i} - \EE\crt{\frac{\psi^{(k_j)}_{J_i}}{J_i}}} 
+ 2 C \gep.
\end{aligned}
\end{equation}
Therefore
\begin{equation} 
\label{set_control}
\left\{ \sup_{\gep\tau(k_j -1)\leq m<T_{k_j}} \abs{\frac{\psi^{(k_j)}_m}{m} 
- \EE\crt{\frac{\psi^{(k_j)}_m}{m}}} > 3 C \gep \right\} \subset 
\left\{ \sup_{i\in\N_0} \abs{\frac{\psi^{(k_j)}_{J_i}}{J_i} 
- \EE\crt{\frac{\psi^{(k_j)}_{J_i}}{J_i}}} > C \gep \right\}.
\end{equation}
Hence, arguing as in~\eqref{Vpre} and using~\eqref{set_control}, the union bound and Assumption 
$(A3)$, we can estimate
\begin{equation}
\begin{aligned}
\PP\prt{V_{k_j} > 3 C \gep} &= \PP\prt{\sup_{\gep\tau(k_j-1)\leq m<T_{k_j}} \abs{\frac{\psi^{(k_j)}_m}{m} 
- \EE\crt{\frac{\psi^{(k_j)}_m}{m}}} > 3 C \gep}\\
&\leq \PP\prt{\exists\, i \in \N_0 \colon \abs{\frac{\psi^{(k_j)}_{J_i}}{J_i} 
- \EE\crt{\frac{\psi^{(k_j)}_{J_i}}{J_i}}} > C \gep}\\
&\leq \sum_{i\in\N_0} \frac{C'}{J_i^{\delta}} 
= \sum_{i\in\N_0} \frac{C'}{(1 + \gep)^{i} J_0^{\delta}} 
\leq \frac{C'''}{J_0^\delta} = \frac{C'''}{\prt{\gep \tau(k_j-1)}^\delta},
\end{aligned}
\end{equation}
which is summable in $j$ due to~\eqref{growth}. By the Borel-Cantelli lemma, we conclude that
\begin{equation} 
\label{B_+_bound} 
\limsup_{n\to\infty} |B_n| \leq 3 C \gep \qquad \PP-\text{a.s.}
\end{equation}
Since $\gep > 0$ is arbitrary,~\eqref{B_+_bound} and~\eqref{B_bound} imply~\eqref{B_goal}.

\subsection{Deterministic term $D_n$} 
\label{Dn}

To conclude the proof of Theorem~\ref{CEThm}, it remains to show a.s.\ convergence to zero of 
$D_n$ in~\eqref{RBD}.  We make use of the following statement derived in~\cite[Lemma 3.1]{AdH17}, 
which is a variant of the so-called Toeplitz lemma tailored to RWCRE.

\begin{lemma} 
\label{lem:Toep}
Let $(\gamma_{k,n})_{k,n\in\N}$, $\bar\gamma^n$ be as in~\eqref{gamma_phi} and $\bar T^n$ 
be as in~\eqref{time_incr}. Let $(z_k)_{k\in\N}$ be a real-valued sequence such that $\lim_{k\to\infty} 
z_k = z^*$ for some $ z^*\in\R$.  Then
\begin{equation}
\lim_{n\to \infty} \left(\,\sum_{k \in \N} \gamma_{k,n} z_k + \bar\gamma^n z_{\bar T^n}\right) = z^*.
\end{equation}
\end{lemma}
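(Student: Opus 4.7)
The key structural observation is that the coefficients form a convex combination: by~\eqref{time_incr*} we have
\begin{equation*}
\sum_{k \in \N} \gamma_{k,n} + \bar\gamma^n
= \frac{1}{n}\left(\sum_{k=1}^{\ell(n)-1} T_k + \bar T^n\right) = 1.
\end{equation*}
This lets me rewrite the quantity of interest as a weighted average of centered terms:
\begin{equation*}
\sum_{k\in\N} \gamma_{k,n} z_k + \bar\gamma^n z_{\bar T^n} - z^*
= \sum_{k\in\N} \gamma_{k,n}(z_k - z^*) + \bar\gamma^n (z_{\bar T^n} - z^*).
\end{equation*}
From here the argument is a standard Toeplitz-style splitting. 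The plan is to fix $\gep>0$, choose $K$ large enough that $|z_k - z^*| < \gep$ for all $k \geq K$, and control the contributions of early and late indices by different mechanisms.

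For the sum over $k$, I would split at $K$. For the head $\sum_{k<K} \gamma_{k,n}(z_k-z^*)$, observe that for each fixed $k$ the coefficient $\gamma_{k,n} = T_k/n$ (once $n$ is large enough that $\ell(n) - 1 \geq k$) tends to $0$ as $n\to\infty$, and since the head involves only finitely many bounded terms it vanishes as $n\to\infty$. For the tail $\sum_{k \geq K} \gamma_{k,n}(z_k - z^*)$, the bound $|z_k - z^*| < \gep$ together with $\sum_k \gamma_{k,n} \leq 1$ gives an absolute value at most $\gep$.

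The boundary term requires a small case split, which is the only place that needs slight care. If $\bar T^n \geq K$, then $|z_{\bar T^n} - z^*| < \gep$ and the estimate $\bar\gamma^n \leq 1$ yields a bound by $\gep$. If instead $\bar T^n < K$, then $\bar\gamma^n = \bar T^n / n \leq K/n$, and since $(z_k)_{k\in\N}$ is convergent it is bounded, so $|z_{\bar T^n} - z^*|$ is uniformly bounded by some $M$; consequently $\bar\gamma^n |z_{\bar T^n} - z^*| \leq MK/n \to 0$ as $n\to\infty$. Combining these estimates yields $\limsup_{n\to\infty} |\sum_k \gamma_{k,n} z_k + \bar\gamma^n z_{\bar T^n} - z^*| \leq 2\gep$, and letting $\gep \downarrow 0$ concludes the proof.

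There is no real obstacle here: the lemma is essentially a deterministic Toeplitz/Cesàro-style fact, and the only conceptual nuance is that $\bar T^n$ need not diverge along the full sequence $n\to\infty$ (it can drop back to small values right after each refreshing time), which is handled by the case split above using that the corresponding weight $\bar\gamma^n$ is then forced to be small.
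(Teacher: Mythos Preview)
Your proof is correct and is exactly the standard Toeplitz-style argument one would expect. Note that the paper does not actually prove this lemma: it is stated there and immediately attributed to~\cite[Lemma~3.1]{AdH17}, so there is no in-paper proof to compare against; your argument is presumably essentially the same as the one in that reference.
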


\noindent 
Recall that $L = \lim_{k\to\infty} L_k$.  By Lemma~\ref{lem:Toep} with $z_k = L_k - L$ and $z^*=0$, 
we conclude that 
\begin{equation}
\label{D_goal}
\limsup_{n\to\infty} D_n = 0 \qquad \PP-a.s. 
\end{equation}

\medskip
Combining~\eqref{R_goal},~\eqref{B_goal} and~\eqref{D_goal}, we get the claim in~\eqref{CoolingSum}. 
\end{proof}


\section{Concentration of cumulants for RWRE displacements} 
\label{Cumu}

The proof of Theorem~\ref{P:MB} will be divided into four steps, organised in 
Sections~\ref{proofTn}--\ref{ss:halfhit}. The basic idea is to derive a concentration inequality for 
the hitting times of RWRE, which are easier to analyse, and then transfer this to a concentration 
inequality for the  displacements of RWRE.

Here is the analogue of Theorem~\ref{P:MB} for the hitting times defined in~\eqref{Htimes}. 

\begin{proposition}[{\bf Concentration of cumulants for RWRE hitting times}] 
\label{L:ZT}
Suppose that $\mu$ is basic. Then, for any $\lambda\in \R$, $\delta \in (0,1)$ 
and $\gep>0$ there are $C,c \in (0,\infty)$ (depending on $\mu,\lambda,\delta,\gep$) for which  
\begin{equation} 
\label{e:ZT}
\mu\prt{\omega \colon \abs{\frac{1}{n} \log E_0^\omega \crt{e^{\lambda H_n}} - \mc{J}^*(\lambda)} > \gep} 
\leq C e^{-c n^{1-\delta}}
\end{equation}
where $\mc{J}^*$ is the Legendre transform of the rate function $\mc{J}$ in Proposition~\ref{htRWRE}.
\end{proposition}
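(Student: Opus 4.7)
The plan is to rewrite $\log E_0^\omega[e^{\lambda H_n}]$ as a Birkhoff sum of local-like functionals of $\omega$ via the strong Markov property, and then establish the stretched-exponential concentration by truncating these functionals to depend on only $O(n^\delta)$ coordinates and applying a block-independence bound.

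First I would decompose $H_n$ through successive one-step hitting times. Let $\tau_i$ denote the time for the walk to hit site $i$ starting from $i-1$. By the strong Markov property applied at $H_1,\ldots,H_{n-1}$, the variables $\tau_1,\ldots,\tau_n$ are $P_0^\omega$-independent and $H_n=\sum_{i=1}^n \tau_i$. Writing $\theta$ for the left-shift on $(0,1)^\Z$ and $\psi_\lambda(\omega):=\log E_0^\omega[e^{\lambda \tau_1}]$ — a measurable functional of $\omega|_{\Z_{\leq 0}}$ since a walk starting at $0$ that has not yet reached $1$ lives in $\Z_{\leq 0}$ — this gives
\begin{equation}
\frac{1}{n}\log E_0^\omega\!\left[e^{\lambda H_n}\right] = \frac{1}{n}\sum_{i=1}^n \psi_\lambda(\theta^{i-1}\omega),
\end{equation}
and Proposition~\ref{htRWRE} combined with Birkhoff's theorem identifies the $\mu$-a.s.\ limit as $\langle \psi_\lambda\rangle = \mc{J}^*(\lambda)$.

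Next comes truncation and blocking. For $K=K_n:=\lceil n^\delta \rceil$, introduce
\begin{equation}
\psi_\lambda^{(K)}(\omega):=\log E_0^\omega\!\left[e^{\lambda \tau_1}\mathbbm{1}\{\tau_1 < H_{-K}\}\right],
\end{equation}
which depends only on $\omega(-K+1),\ldots,\omega(0)$. Uniform ellipticity \eqref{UEllip} combined with the classical potential estimates for RWRE excursions (as in Zeitouni~\cite{Z04}) should yield a deterministic bound $|\psi_\lambda^{(K)}|\leq M(\lambda)$, the convergence $\langle \psi_\lambda^{(K)}\rangle\to \mc{J}^*(\lambda)$, and the key tail estimate
\begin{equation}
\mu\!\left(|\psi_\lambda(\omega) - \psi_\lambda^{(K)}(\omega)| > \eta_K\right) \leq C\,e^{-cK}
\end{equation}
with $\eta_K\to 0$. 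Because $\psi_\lambda^{(K)}(\theta^{i-1}\omega)$ and $\psi_\lambda^{(K)}(\theta^{j-1}\omega)$ are independent whenever $|i-j|\geq K$, I split $\{1,\ldots,n\}$ into alternating blocks of length $K$ and apply Hoeffding's inequality to the odd-block and even-block sums separately — each is a sum of $\sim n/K$ independent variables bounded in absolute value by $KM(\lambda)$. This yields
\begin{equation}
\mu\!\left(\left|\tfrac{1}{n}\sum_{i=1}^n \psi_\lambda^{(K)}(\theta^{i-1}\omega) - \langle \psi_\lambda^{(K)}\rangle\right| > \gep\right) \leq 4\exp\!\left(-\frac{c\,\gep^{2}\,n}{K\,M(\lambda)^{2}}\right),
\end{equation}
which for $K=\lceil n^\delta\rceil$ matches the target rate $e^{-cn^{1-\delta}}$. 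A union bound with the truncation estimate and with $|\langle \psi_\lambda^{(K)}\rangle-\mc{J}^*(\lambda)|\leq \gep$ (valid for $K$ large) then produces~\eqref{e:ZT}.

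The hard part will be the truncation estimate: one must show that outside a $\mu$-event of probability at most $Ce^{-cK}$, the contribution to $E_0^\omega[e^{\lambda \tau_1}]$ from trajectories that reach $-K$ is exponentially small in $K$. The natural approach expands $E_0^\omega[e^{\lambda \tau_1}]$ as a geometric-type series in the number of left-excursions from $0$ and controls the exponential moment of a single excursion through the RWRE potential on $\Z_{\leq 0}$; this is exactly where the announced adaptation of Lemma~3.4.10 in Zeitouni~\cite{Z04} carries the weight of the argument. Once this analytic input is in place, the blocking step sketched above is by comparison routine.
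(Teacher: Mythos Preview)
Your Birkhoff-sum decomposition $\frac{1}{n}\log E_0^\omega[e^{\lambda H_n}]=\frac{1}{n}\sum_{i=1}^n \psi_\lambda(\theta^{i-1}\omega)$ is correct and gives a genuinely different route from the paper's, but two concrete gaps prevent the argument from closing.

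\textbf{Boundedness of the truncated functional fails for $\lambda>0$.} Spatial truncation to $\{\tau_1<H_{-K}\}$ does not tame traps \emph{inside} the window $[-K+1,0]$. With $\omega(0)=\mathfrak{c}$ and $\omega(-1)=1-\mathfrak{c}$, the paths that loop $0\to-1\to 0$ repeatedly before jumping to $1$ all satisfy $\tau_1<H_{-K}$, and already contribute
\[
E_0^\omega\big[e^{\lambda\tau_1}\mathbbm{1}_{\{\tau_1<H_{-K}\}}\big]\;\geq\; e^{\lambda}\mathfrak{c}\sum_{m\geq 0}\big(e^{2\lambda}(1-\mathfrak{c})^2\big)^m,
\]
which diverges once $\lambda\geq -\log(1-\mathfrak{c})$. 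More generally, for any $\lambda>0$ the essential supremum of $\psi_\lambda^{(K)}$ over elliptic environments grows without bound in $K$, so no uniform constant $M(\lambda)$ is available for your Hoeffding step. Since the proposition must cover the interval $(0,\lambda_{\mathrm{crit}})$ on which $\mc{J}^*$ is finite (nonempty in the transient regime), this is a real obstruction.

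\textbf{The rate is capped at $e^{-c\sqrt{n}}$.} Even for $\lambda\leq 0$, where $\psi_\lambda$ is deterministically bounded, your scheme trades a probabilistic truncation cost of order at best $e^{-cK}$ against a block-Hoeffding cost of order $e^{-cn/K}$. Optimising over $K$ forces $K\asymp\sqrt{n}$, so you recover \eqref{e:ZT} only for $\delta\geq\tfrac12$, not for the full range $\delta\in(0,1)$.

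The paper's proof avoids both problems through a different truncation, taken from \cite[Lemma~3.4.10]{Z04}. It works with
\[
g^{\delta,n}(\omega)=\log E_0^\omega\Big[e^{\lambda H_n}\,\mathbbm{1}_{\{H_n<Kn\}}\,\mathbbm{1}_{\{N^n<n^{\delta/2}\}}\Big],
\]
where $K=K(\lambda)$ is a \emph{fixed constant} and $N^n$ is the maximal local time up to $H_n$. The local-time cutoff has two effects: (i) $g^{\delta,n}$ approximates the full cumulant within $o(1)$ \emph{for every} elliptic $\omega$, so there is no probabilistic truncation loss; and (ii) the partial derivative of $g^{\delta,n}$ in each of its $O(n)$ environment coordinates is at most $O(n^{\delta/2})$. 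Azuma--Hoeffding applied to the martingale differences $E^\mu[g^{\delta,n}\mid\mc{F}_i]-E^\mu[g^{\delta,n}\mid\mc{F}_{i-1}]$ then yields \eqref{e:ZT} directly, for every $\delta\in(0,1)$. In short, truncating on local time rather than on spatial range is the idea your scheme is missing.
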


The proof of Proposition~\ref{L:ZT} is given in Section~\ref{proofTn} and is based on an adaptation 
of an argument presented in~\cite{Z04}. We will prove Theorem~\ref{P:MB} by using Proposition~\ref{L:ZT} 
as follows:
\begin{itemize}
\item 
Section~\ref{ss:block}: Partition $[-1,1]$ into blocks and show that Theorem~\ref{P:MB} follows
from a concentration result for each block of the partition.
\item
Section~\ref{ss:halfblock}: Show that concentration on half-lines implies concentration on blocks of 
the partition.  
\item 
Section~\ref{ss:halfhit}: Prove concentration on half-lines by using Proposition~\ref{L:ZT} and the 
relation between hitting times and displacements for RWRE.
\end{itemize}

\subsection{Concentration for hitting times} 
\label{proofTn}

In this section we prove Proposition~\ref{L:ZT}. The proof is based on~\cite[Lemma 3.4.10, p.\ 291]{Z04}.  

Define, for fixed $K \in (0,\infty)$,
\begin{equation}
g^{\delta,n}(\omega) = \log E^\omega_0 \big[e^{\lambda H_n} \Ind{\{H_n < K n\}} 
\Ind{\{N^n < n^{\delta/2}}\big],
\end{equation}
where $N^n = \sup_{x\in\Z} N^n_x$ and $N^n_x$ is the number of visits at $x$ before $H_n$. Note that 
$g^{\delta,n}(\omega)$ is a function of the environment coordinates $(\omega_i\colon\,\abs{i} \leq Kn)$. 
For $i\in\N$, define
\begin{equation}
\begin{aligned}
&\mc{F}_0  = \sigma\chv{\emptyset}, \quad  
\mc{F}_1 = \sigma\chv{\omega_0}, \quad  
\mc{F}_2  = \sigma\chv{\omega_0,\omega_1}, 
\quad\mc{F}_3 = \sigma\chv{\omega_0,\omega_1,\omega_{-1}},\\
&\quad \vdots\\
&\mc{F}_{i} = \sigma\chv{\omega_j \colon\, j \in (- \teto{i/2}, \piso{i/2}] \cap \Z},
\end{aligned}
\end{equation}
and denote by $E^\mu$ expectation with respect to $\mu$. Then
\begin{equation}
E^\mu \crt{g^{\delta,n} \mid \mc{F}_{2Kn}} 
= g^{\delta,n}, \qquad E^\mu \crt{g^{\delta,n} 
\mid \mc{F}_0}  = E^\go_0 \crt{g^{\delta,n}}.
\end{equation}
Rewrite
\begin{equation*}
g^{\delta,n}(\omega) - E^\mu \crt{g^{\delta,n}} = \sum_{i = 1}^{2Kn} d_i(\omega), 
\end{equation*}
with \begin{equation*}
d_i(\omega) = E^\mu \crt{g^{\delta,n} \mid 
\mc{F}_i}(\omega) - E^\mu \crt{g^{\delta,n} \mid \mc{F}_{i-1}}(\omega).
\end{equation*}

Let  $X_0 = E^\mu[g^{\delta,n}]$ and $X_m = \sum_{i = 1}^{m} d_i(\omega)$. Since $E^\mu \crt{d_i 
\mid \mc{F}_m} = 0$ for $i>m$, $\chv{X_m}_{m\in\N_0}$ is a martingale. We obtain a bound for $d_i(\omega) $ 
by writing
\begin{equation}
\label{dif}
\begin{aligned}
d_{i+1}(\omega) = E^\mu \crt{g^{\delta,n}\mid \mc{F}_{i+1}}(\omega) 
- E^\mu \crt{g^{\delta,n}\mid \mc{F}_{i}}(\omega)  
\leq \sup_{\omega^i} \left [g^{\delta,n}(\omega^i) - g^{\delta,n}(\omega)\right]=: |d_i|_{\infty},
\end{aligned}
\end{equation}
where $\omega^i_x = \omega_x$ for all $x\in\Z$, except for
\begin{equation}
x_i =
\begin{cases}
-i/2, & \text{ if $i$ is even},\\
\teto{i/2}, & \text{ if $i$ is odd}.
\end{cases}
\end{equation}
To compute the difference in~\eqref{dif}, we use a bound on the derivative of $g^{\delta,n}(\omega)$. 
From the computations in~\cite[p.\ 291]{Z04} we have that, for any $\delta\in(0,1)$,
\begin{equation}
|d_i|_{\infty}\leq \abs{\frac{\partial g^{\delta,n}(\omega)}{\partial \omega_{x_i}}}  
\leq \sqrt{K}\,\frac{n^{\delta/2}}{\mathfrak{c}}.
\end{equation}
Applying the Azuma-Hoeffding inequality, we obtain
\begin{equation}
\mu\prt{ \go \colon \abs{X_{2Kn} - X_0} > u} \leq 2 \exp \prt{-\frac{u^2}{2 \sum_{i = 1}^{2Kn} |d_i|^2_{\infty}}}.
\end{equation}
Since $X_{2Kn} = g^{\delta,n}(\omega)$ and $X_0 = E^\mu[g^{\delta,n}]$, we obtain
\begin{equation} 
\label{e:AH}
\mu\prt{\go \colon \abs{g^{\delta,n}(\omega) - E^\mu \crt{g^{\delta,n}}} > u n} 
\leq 2 \exp \prt{- \frac{u^2 n^2}{ C n^{1 + \delta}}} \leq 2 \exp\prt{- \frac{u^2}{C} n^{1-\delta}}.
\end{equation}
To conclude the proof of Proposition~\ref{L:ZT}, we write
\begin{equation} \label{H_concent}
\begin{aligned}
&\mu\prt{\omega \colon \abs{\frac{1}{n} 
\log E_0^\omega \crt{e^{\lambda H_n}} - \mc{J}^*(\lambda)} > \gep} 
\leq \mu\prt{\omega \colon \frac{1}{n}\,\abs{\log E_0^\omega\crt{e^{\lambda H_n}} 
- g^{\delta,n}(\omega)} > \tfrac13 \gep}\\
&+ \mu\prt{\omega \colon \frac{1}{n} \abs{g^{\delta,n}(\omega) 
- E^\mu \crt{g^{\delta,n}}} > \tfrac13 \gep} 
+ \mu\prt{\omega \colon \abs{\frac{1}{n}\,E^\mu \crt{g^{\delta,n}} 
- \mc{J}^*(\lambda)} > \tfrac13 \gep}.
\end{aligned}
\end{equation}
We will estimate the second term in the right hand side by~\eqref{e:AH}. Let us first show that the first 
and the third term vanish as $n\to\infty$. For the first term in~\eqref{H_concent}, the ellipticity condition 
implies that for large $n$,
\begin{equation} 
\label{claim_1}
\frac{1}{n}\,\abs{\log E_0^\omega \crt{e^{\lambda H_n}} - g^{\delta,n}(\omega)} < \tfrac13 \gep.
\end{equation}
Indeed, from the argument in the proof of~\cite[Lemma 3.4.10]{Z04}, specifically the computations 
just prior to the statement of~\cite[Lemma 3.4.14]{Z04}, we obtain the following estimate.
For $K = K(\lambda)$ and $n$ large enough, 
\begin{equation}
E^\omega_0 \crt{e^{\lambda H_n} \Ind{\{H_n < K n\}} \Ind{\{N < n^{\delta/2}\}}} 
\geq \tfrac12 E^\omega_0 \crt{e^{\lambda H_n}}.
\end{equation}
Since
\begin{equation}
1\leq \frac{E_0^\omega \crt{e^{\lambda H_n}}}{E^\omega_0 \crt{e^{\lambda H_n} 
\Ind{\{H_n < K n\}} \Ind{\{N < n^{\delta/2}\}}}} \leq 2,
\end{equation}
it follows that
\begin{equation}
\lim_{n\to\infty} \frac{1}{n} \prt{\log E_0^\omega \crt{e^{\lambda H_n}} - g^{\delta,n}(\omega)} = 0,
\end{equation}
which implies~\eqref{claim_1}. Furthermore, since $\lim_{n\to\infty} \frac{1}{n} \log E_0^\omega [e^{\lambda H_n}] 
= \mc{J}^*(\lambda)$,~\eqref{claim_1} also implies that for $n$ large enough the third term in~\eqref{H_concent} 
is zero. We conclude that for $n$ large enough,
\begin{equation}
\begin{aligned}
\mu\prt{\omega \colon \abs{\frac{1}{n} \log E_0^\omega\crt{e^{\lambda H_n}} 
- \mc{J}^*(\lambda)} > \gep} \leq \mu \prt{\omega \colon \frac{1}{n} \abs{g^{\delta,n}(\omega) 
- E^\go_0 \crt{g^{\delta,n}}} > \tfrac13 \gep}.
\end{aligned}
\end{equation}
Picking $u =\tfrac13 \gep$ in~\eqref{e:AH}, we obtain~\eqref{e:ZT}.

\subsection{Block decomposition} 
\label{ss:block}

Note that $\frac{Z_n}{n} \in [-1,1]$. Consider the following block decomposition (see Fig.~\ref{fig:Blockdepomp}):
\begin{equation} \label{blocks}
\gD^N_i =
\begin{cases}
[- 1, - 1 + \frac{1}{N}], &\text{ if } i = - N, \\ 
(\frac{i}{N}, \frac{i+1}{N}], &\text{ if } i \in \chv{- N + 1, \ldots, N - 1}.
\end{cases}
\end{equation}
To deal with the flat piece of the rate function $\mc{I}$ in the positive-speed case, we define the following 
interval $\Delta^{*,N}_0$ containing $(0,v_\mu]$ (see Fig.~\ref{fig:Blockdepomp}):
\begin{equation}
\Delta^{*,N}_0 = \Big(0, \frac{\piso{v_\mu N + 1}}{N}\Big] 
= \bigcup_i\, \left\{\Delta^{N}_i\colon\, \mc{I}\left(\frac{i-1}{N}\right)=0\right\}, 
\qquad \Delta^{*,N}_i = \gD^{N}_{i} \setminus \Delta^{*,N}_0.
\end{equation}

\begin{figure}[htbp]
\begin{center}
\includegraphics[clip, trim=3cm 10cm 3cm 14cm, width=0.8\textwidth]{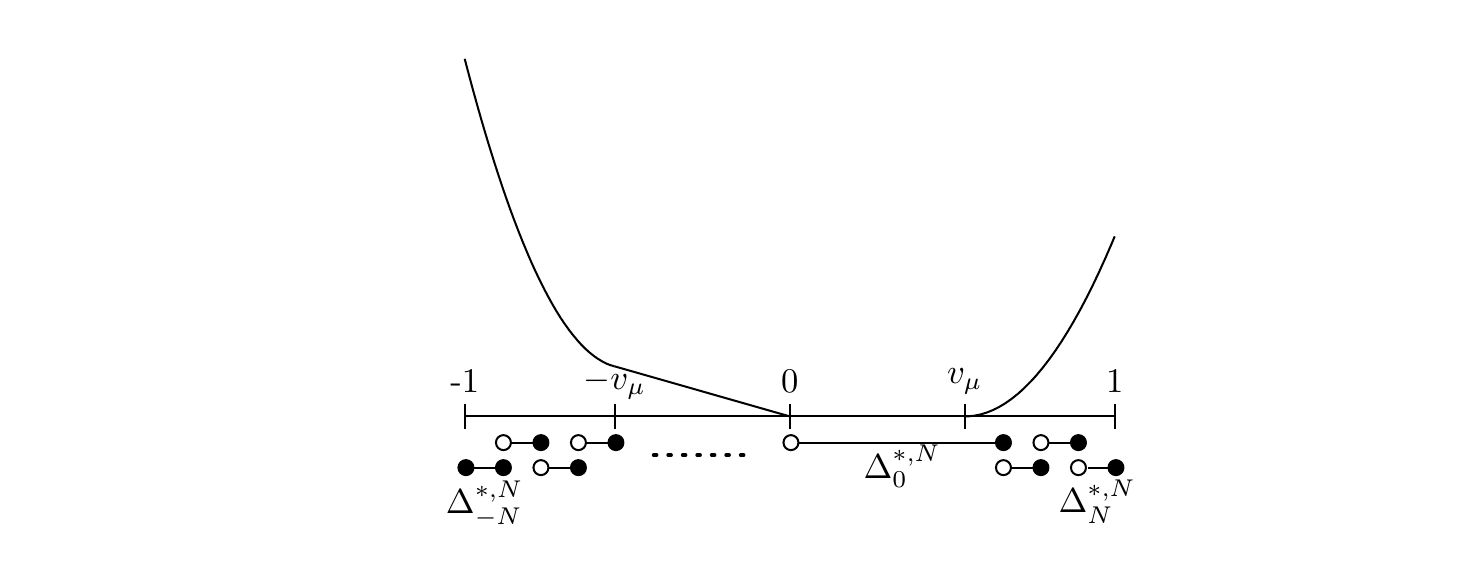}
\caption{Block decomposition of $[-1,1]$. Black and white circles indicate closed and open 
boundaries of the intervals, respectively. All the intervals are of length $1/N$, except possibly 
$\Delta^{*,N}_0$, which contains the flat piece $(0,v_\mu]$.}
\label{fig:Blockdepomp}
\end{center}
\end{figure}

\noindent
By the intermediate value theorem, we have
\begin{equation}
E^\go_0 \crt{e^{n \gl \frac{Z_n}{n}} \Ind{\crl{\frac{Z_n}{n} \in \gD_i^{*,N}}}} 
= e^{n \gl u_i^*} P^\omega_0 \prt{\frac{Z_n}{n} \in \gD_i^{*,N}}
\end{equation}
for some $u_i^* \in \gD_i^{*,N}$. Now,
\begin{equation}
\begin{aligned}
\frac{1}{n} \log E^\omega_0 \crt{e^{\lambda Z_n}} - \mc{I}^*(\gl) 
&= \frac{1}{n} \log e^{- n \mc{I}^*(\gl)} \sum_i E^\go_0 \crt{e^{n \gl \frac{Z_n}{n}} 
\Ind{\crl{\frac{Z_n}{n} \in \gD_i^{*,N}}}}\\
&= \frac{1}{n} \log \sum_i e^{n \gl u_i^* - n \mc{I}^\go_n(\gD_i^{*,N}) - n \mc{I}^*(\gl)},
\end{aligned}
\end{equation}
where $\mc{I}^\go_n(\gD) = - \frac{1}{n} \log P^\omega_0 \left(\frac{Z_n}{n}  \in \gD\right)$. 
Since $\mc{I}^\go_n(\gD)$ converges to $\mc{I}(\gD)$ as $n\to\infty$, we define the 
block error
\begin{equation} 
\label{error1}
o(n,\gD,\go) = \mc{I}(\gD) - \mc{I}^\go_n(\gD)
\end{equation}
and obtain
\begin{equation} 
\label{d_CGF}
\frac{1}{n} \log E^\omega_0 \crt{e^{\lambda Z_n}} - \mc{I}^*(\gl)
= \frac{1}{n} \log \sum_i e^{n \prt{\gl u_i^* - \mc{I}(\gD_i^{*,N}) 
- \mc{I}^*(\gl) + o(n,\gD_i^{*,N},\go)}}.
\end{equation}

To  estimate~\eqref{d_CGF}, we will need the following lemma.

\begin{lemma}[{\bf Reduction to the worst block}] \label{l:blocks}
Given $\gep > 0$, there is an $N_0$ such that, for $N > N_0$ and $n > n_0(N)$,
\begin{equation} 
\label{eq:bound_1}
\abs{\frac{1}{n} \log E^\omega_0 \crt{e^{\lambda Z_n}} - \mc{I}^*(\gl)} 
\leq \tfrac12 \gep + \max_{i} \abs{o(n,\gD_i^{*,N},\go)}
\end{equation}
with $o(n,\gD_i^{*,N},\go)$ as in~\eqref{error1}.
\end{lemma}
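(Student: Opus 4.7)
The key observation is that the identity displayed just above the lemma,
\begin{equation*}
\frac{1}{n}\log E^\go_0\crt{e^{\gl Z_n}}-\mc{I}^*(\gl)=\frac{1}{n}\log\sum_i e^{n(B_i+o_i)},
\end{equation*}
where I abbreviate $B_i:=\gl u_i^*-\mc{I}(\gD_i^{*,N})-\mc{I}^*(\gl)$ and $o_i:=o(n,\gD_i^{*,N},\go)$, sandwiches the quantity of interest between the elementary bounds $\max_i e^{n(B_i+o_i)}\leq \sum_i e^{n(B_i+o_i)}\leq (2N+1)\max_i e^{n(B_i+o_i)}$. This reduces the lemma to three tasks: (i) absorb the cardinality penalty $\log(2N+1)/n$ into $\tfrac18\gep$ by taking $n>n_0(N)$, (ii) show $\max_i B_i\leq \tfrac14\gep$ for $N$ large (upper bound side), and (iii) produce an index $i^*$ with $B_{i^*}\geq -\tfrac14\gep$ (lower bound side).

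For (ii), the Fenchel inequality $\gl u-\mc{I}(u)\leq \mc{I}^*(\gl)$ at $u=u_i^*$ gives $B_i\leq \mc{I}(u_i^*)-\mc{I}(\gD_i^{*,N})$. For every non-flat block ($i\neq 0$), the interval has length $1/N$ and the right-hand side is at most the modulus of continuity of $\mc{I}$ at scale $1/N$, which vanishes as $N\to\infty$ since $\mc{I}$ is continuous on $[-1,1]$. For the flat-piece block $\gD_0^{*,N}\subseteq (0,v_\mu+1/N]$ I use $\mc{I}(u_0^*)\leq \mc{I}(v_\mu+1/N)$ and $\mc{I}(\gD_0^{*,N})\geq 0$ together with continuity of $\mc{I}$ at $v_\mu$ (where $\mc{I}(v_\mu)=0$ in the positive-speed case; in the zero-speed case $\gD_0^{*,N}$ is a short block anyway) to still conclude $B_0\leq \tfrac14\gep$ for $N$ large.

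For (iii), I keep a single term, $\frac{1}{n}\log\sum_i e^{n(B_i+o_i)}\geq B_{i^*}-\max_i|o_i|$, and pick $i^*$ as a block containing a near-maximizer $\widetilde u\in[-1,1]$ of $\gl u-\mc{I}(u)$, i.e.\ $\gl\widetilde u-\mc{I}(\widetilde u)\geq \mc{I}^*(\gl)-\tfrac18\gep$. Since $u_{i^*}^*,\widetilde u\in\gD_{i^*}^{*,N}$ and $\mc{I}(\gD_{i^*}^{*,N})\leq \mc{I}(\widetilde u)$,
\begin{equation*}
B_{i^*}\geq \gl(u_{i^*}^*-\widetilde u)+\big[\gl\widetilde u-\mc{I}(\widetilde u)-\mc{I}^*(\gl)\big]\geq -|\gl|\cdot|\gD_{i^*}^{*,N}|-\tfrac18\gep,
\end{equation*}
and provided $i^*\neq 0$ the length $|\gD_{i^*}^{*,N}|=1/N$ yields $B_{i^*}\geq -\tfrac14\gep$ for $N$ large.

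The main obstacle is precisely to force $i^*\neq 0$ in the lower bound: in the positive-speed regime $\gD_0^{*,N}$ is macroscopic and the global maximum of $\gl u-\mc{I}(u)$ may genuinely be attained inside $(0,v_\mu]$ (e.g.\ at $\widetilde u=v_\mu$ when $\gl\geq 0$ lies below the right-derivative of $\mc{I}$ at $v_\mu$). To circumvent this I shift the near-maximizer slightly outside $\gD_0^{*,N}$: in the $\gl\geq 0$ case I replace $\widetilde u=v_\mu$ by $\widetilde u=v_\mu+1/N$, losing only $|\gl|/N+\mc{I}(v_\mu+1/N)$, both $o(1)$ in $N$ by continuity of $\mc{I}$ at $v_\mu$; in the $\gl\leq 0$ case the natural candidate $\widetilde u=0$ already lies in the short block $(-1/N,0]$, disjoint from $\gD_0^{*,N}$. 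Combining (i)--(iii) gives the bound $\leq \tfrac12\gep+\max_i|o_i|$ required by the lemma.
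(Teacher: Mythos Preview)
Your argument is correct and follows the same overall scheme as the paper: both proofs use the representation $\frac{1}{n}\log\sum_i e^{n(B_i+o_i)}$, bound $B_i\le \gd_N$ via Fenchel for the upper bound, absorb the cardinality penalty $\log(2N)/n$ into the $\gep$ budget, and for the lower bound keep a single block containing a (near-)maximizer of $\gl u-\mc{I}(u)$. The only genuine difference is in handling the flat block for the lower bound: the paper argues directly that the exact maximizer $\hat{x}$ satisfies $\hat{x}>v_\mu$ when $\gl>0$ (hence $\hat{x}\notin\gD_0^{*,N}$ for $N$ large), whereas you allow the maximizer to sit at $v_\mu$ and then shift it to $v_\mu+1/N$ at an $o(1)$ cost --- a slightly more robust maneuver that avoids appealing to differentiability of $\mc{I}$ at $v_\mu$, but otherwise equivalent.
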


\begin{proof} 
Since $\mc{I}$ is uniformly continuous in $[-1,1]$, we have
\begin{equation}
\gd_N = \sup_{\abs{s-t}\leq 1/N} \abs{\mc{I}(s) - \mc{I}(t)} \to 0, \qquad N \to \infty.
\end{equation}
Let $ \gd_i = \mc{I}(u_i^*) -\mc{I}(\gD_i^{*,N})$, and note that $0 < \delta_i \leq \delta_N$.

\paragraph{Upper bound.}

Since
\begin{equation}
\mc{I}^*(\gl) = \sup_{u\in\R} \big[\gl u - \mc{I}(u)\big] \geq \gl u_i^* - \mc{I}(u_i^*),
\end{equation}
we get the bound
\begin{equation}
\gl u_i^* - \mc{I}(\gD_i) - \mc{I}^*(\gl) \leq \gl u_i^* - \big[\mc{I}(u_i^*) - \gd_i\big] 
- \big[\gl u_i^* - \mc{I}(u_i^*)\big] = \gd_i < \gd_N.
\end{equation}
Let $N_0$ be such that $\delta_{N_0} < \tfrac12\gep$. For $N > N_0$, let $n_0(N)$ be such that 
$\gd_{N_0} + \frac{\log 2N}{n_0} < \tfrac12\gep$.  For $n > n_0(N)$, we have
\begin{equation}
\label{e:upper}
\begin{aligned}
\frac{1}{n} \log E^\omega_0 \crt{e^{\lambda Z_n}} - \mc{I}^*(\gl) 
&\leq \frac{1}{n} \log \sum_i e^{n \prt{\gd_N + o(n,\gD_i^{*,N},\go)}}\\
&\leq \gd_N + \frac{\log 2N}{n} + \max_i o(n,\gD_i^{*,N},\go)\\
&\leq \tfrac12 \gep + \max_i o(n,\gD_i^{*,N},\go).
\end{aligned}
\end{equation}

\paragraph{Lower bound.}

Let $\hat{x}$ be such that $\mc{I}^*(\gl) = \gl \hat{x} - \mc{I}(\hat{x})$. Then there exists a 
$\hat{\gi}$ such that $\hat{x} \in \gD_{\hat{\gi}}^{\ast,N}$. For large $N$, we see that 
$\hat{x} \notin \Delta^{*,N}_0$. Indeed, if $\lambda < 0$, then $\hat{x} \leq 0$ and therefore 
$\hat{x} \notin \Delta^{*,N}_0$. On the other hand, if $\lambda > 0$, then $\hat{x} > v_\mu$. 
Pick $N$ large enough so that $v_\mu + 1/N < \hat{x}$. Since $\Delta^{*,N}_0  \subset 
(0, v_\mu + 1/N]$, we conclude that $\hat{x} \notin \Delta^{*,N}_0$.

Since $\hat{x} \in \Delta^{*,N}_{\hat{\iota}}$, it follows that $\abs{\hat{x} - u^*_{\hat{\gi}}} < 1/N$.  
Choosing $N_0$ so that $\big| \frac{\gl}{N_0} + \gd_{N_0} \big| < \tfrac12 \gep$, we obtain
\begin{equation}
\label{e:lower}
\begin{aligned}
\frac{1}{n} \log E^\omega_0 \crt{e^{\lambda Z_n}} - \mc{I}^*(\gl) 
&\geq \frac{1}{n} \log e^{n\left([\gl u^*_{\hat{\gi}} - \mc{I}(\Delta^{*,N}_{\hat{\gi}})] 
- [\gl \hat{x} - \mc{I}(\hat{x})] + o(n,\gD_{\hat{\gi}}^{*,N},\go)\right)}\\
&\geq - \prt{\frac{\gl}{N} + \gd_N} + o(n,\gD_{\hat{\gi}}^{*,N},\go)\\
&\geq - \tfrac12 \gep + o(n,\gD_{\hat{\gi}}^{*,N},\go).
\end{aligned}
\end{equation}
The claim in~\eqref{eq:bound_1} follows from~\eqref{e:upper} and~\eqref{e:lower}.
\end{proof}

In view of Lemma~\ref{l:blocks}, we can bound
\begin{equation}
\begin{aligned}
\mu\prt{\go \colon \abs{\frac{1}{n} \log E^\omega_0 \crt{e^{\lambda Z_n}} - \mc{I}^*(\gl)} > \gep} 
&\leq \mu\prt{\go \colon \max_i \abs{o(n,\gD_i^{*,N},\go)} > \tfrac12 \gep}\\
&\leq \sum_i \mu\prt{\go \colon \abs{o(n,\gD_i^{*,N},\go)} > \tfrac12 \gep}.
\end{aligned}
\end{equation}
Since there are only finitely many terms in the sum,~\eqref{e:MB} follows once we prove the following:

\begin{lemma}[{\bf Concentration of empirical speed on an interval}] \label{L:Dt}
Let $\mu$ be basic, $\delta > 0$, and let $\gD = (a,b]$ or $\gD = [a,b]$ with $a \neq b$.
\begin{itemize}
\item 
If $0 \leq a$, then for every $\gep>0$ there are positive constants $C,c$ such that
\begin{equation} 
\label{e:Dt+}
\mu\prt{\omega \colon \abs{\frac{1}{n} \log P_0^\omega \prt{\frac{Z_n}{n} \in \gD} 
 +\mc{I}(a) } > \gep} \leq C \, e^{- c n^{1-\delta}}.
\end{equation}
\item 
If $b \leq 0$, then for every $\gep>0$ there are positive constants $C,c$ such that
\begin{equation} 
\label{e:Dt-}
\mu\prt{\omega \colon \abs{\frac{1}{n} \log P_0^\omega \prt{\frac{Z_n}{n} \in \gD} 
+\mc{I}(b) } > \gep}  \leq C \, e^{- c n^{1-\delta}}.
\end{equation}
\end{itemize}
\end{lemma}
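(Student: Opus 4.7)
The plan is to use the nearest-neighbour identity $\{Z_n \ge k\} = \{H_k \le n\}$ to convert the displacement probability into a hitting-time probability, and then apply Proposition~\ref{L:ZT} together with Legendre duality via~\eqref{IJrel}. By the reflection argument in~\eqref{+0R} it suffices to prove \eqref{e:Dt+}; the statement \eqref{e:Dt-} then follows by applying \eqref{e:Dt+} to the reflected environment $\widetilde{\omega}$.

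First I would reduce to half-line events. Since $\mc{I}$ is non-decreasing on $[0,1]$ and strictly increasing on $[v_\mu,1]$, whenever $a > v_\mu$ one has $\mc{I}(a) < \mc{I}(b)$, and writing
\begin{equation*}
P_0^\omega(Z_n/n \in \Delta) = P_0^\omega(Z_n \ge \lceil an \rceil) - P_0^\omega(Z_n > \lfloor bn \rfloor)
\end{equation*}
shows that on a typical $\omega$ the first term exponentially dominates the second, so it suffices to concentrate $\tfrac{1}{n}\log P_0^\omega(Z_n \ge \lceil an \rceil)$ around $-\mc{I}(a)$. For intervals inside the flat piece $(0,v_\mu]$, where $\mc{I}(a) = 0$, concentration around $0$ is a much milder statement which can be read off directly from Proposition~\ref{flatannLDbound} together with the quenched LLN for RWRE.

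Next, using $\{Z_n \ge k\} = \{H_k \le n\}$, the task becomes concentration of $\tfrac{1}{n}\log P_0^\omega(H_{\lceil an \rceil} \le n)$ around $-\mc{I}(a)$. For the upper bound, I apply exponential Markov with $\lambda < 0$,
\begin{equation*}
P_0^\omega(H_k \le n) \le e^{-\lambda n}\, E_0^\omega\bigl[e^{\lambda H_k}\bigr],
\end{equation*}
take logarithms and divide by $n$. Setting $k = \lceil an \rceil$ and applying Proposition~\ref{L:ZT} to $\tfrac{1}{k}\log E_0^\omega[e^{\lambda H_k}]$ gives, on a $\mu$-event of measure at least $1 - C e^{-c n^{1-\delta}}$, the bound $\tfrac{1}{n}\log P_0^\omega(H_k \le n) \le -\lambda + a\mc{J}^*(\lambda) + O(\gep)$. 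Optimizing over $\lambda < 0$ and invoking
\begin{equation*}
\inf_{\lambda<0}\bigl[-\lambda + a\mc{J}^*(\lambda)\bigr] = -a\,\mc{J}(1/a) = -\mc{I}(a),
\end{equation*}
which follows from~\eqref{qRF_T} and~\eqref{IJrel}, yields the desired upper bound on $\tfrac{1}{n}\log P_0^\omega(H_k \le n)$.

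For the matching lower bound I would use a change-of-measure at the Legendre-optimizing $\lambda^* < 0$. Tilting by $e^{\lambda^* H_k}/E_0^\omega[e^{\lambda^* H_k}]$ produces a measure under which $H_k/k$ is concentrated around $1/a$, so a Chebyshev estimate together with the quenched weak LDP of Proposition~\ref{htRWRE} produces $P_0^\omega(H_k \le n) \ge e^{-n\mc{I}(a) - n\gep}$ off a $\mu$-event of the required small probability. I expect this lower bound to be the main obstacle: one must control not only the exponential moment in the tilt (delivered by Proposition~\ref{L:ZT}) but also the fluctuations of $H_k$ under the tilted law, which calls for either a quenched second-moment bound or an interval-refinement scheme in which $H_k$ is first localised in a sub-interval of size $o(n)$ around $n$ before applying the exponential Markov inequality. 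A secondary subtlety is that $\lambda^*$ may approach the boundary of the domain of $\mc{J}^*$ (see Fig.~\ref{rateJJ*}); this is handled by truncating $\lambda$ to a compact subset and approximating $a$ by nearby rationals, exploiting the continuity of $\mc{I}$ afforded by Proposition~\ref{staticLDPque}.
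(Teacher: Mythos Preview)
Your approach is essentially the same as the paper's: the paper also reduces Lemma~\ref{L:Dt} to a half-line concentration result (stated separately as Lemma~\ref{L:*}) and then proves the half-line statement via hitting times, using exponential Markov for the upper bound and an exponential tilt at the Legendre-optimal parameter for the lower bound, both controlled through Proposition~\ref{L:ZT}. Your identification of the tilted lower bound as the main obstacle, and your plan to localise $H_k$ near $n$ under the tilted law, matches exactly what the paper carries out in Section~\ref{ss:halfhit}.

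One caveat: your handling of intervals inside the flat piece $(0,v_\mu]$ by appealing to Proposition~\ref{flatannLDbound} is not correct as written, since that proposition is an \emph{annealed} estimate on $P_0^\mu$ and does not by itself yield the required quenched concentration in $\omega$. That said, the paper's own reduction from intervals to half-lines also implicitly relies on $\mc{I}(b)-\mc{I}(a)=\eta>0$, so it too does not cover intervals lying entirely in the flat piece; the block decomposition in Section~\ref{ss:block} is arranged so that this case never arises (the unique block $\Delta_0^{*,N}$ containing the flat piece has right endpoint strictly above $v_\mu$).
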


The proof of Lemma~\ref{L:Dt} will be given in the next section as a corollary to the following 
concentration result for the half-line:

\begin{lemma}[{\bf Concentration of empirical speed on a half-line}] \label{L:*}
Let $\mu$ be basic and $\delta>0$.
\begin{itemize}
\item 
If $0 \leq a$, then for every $\gep>0$ there are positive constants $C,c$ such that
\begin{equation} 
\label{e:BSI+}
\mu\prt{\omega \colon \abs{\frac{1}{n} \log P_0^\omega \prt{\frac{Z_n}{n} > a} + \mc{I}(a)} > \gep} 
\leq C \, e^{- c n^{1-\delta}}. 
\end{equation}
\item 
If $b \leq 0$, then for every $\gep>0$ there are positive constants $C,c$ such that
\begin{equation} 
\label{e:BSI-}
\mu\prt{\omega \colon \abs{\frac{1}{n} \log P_0^\omega \prt{\frac{Z_n}{n} \leq b} + \mc{I}(b)} > \gep} 
\leq C \, e^{- c n^{1-\delta}}.
\end{equation}
\end{itemize}
\end{lemma}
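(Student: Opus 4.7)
By the reflection $\omega \mapsto \widetilde\omega$ with $\widetilde\omega(x) = 1 - \omega(x)$, the bound \eqref{e:BSI-} reduces to \eqref{e:BSI+} applied to the image law $\widetilde\mu$ (which is again basic), via $P^\omega_0(Z_n \le bn) = P^{\widetilde\omega}_0(Z_n \ge -bn)$ and $\mathcal{I}_\mu(b) = \mathcal{I}_{\widetilde\mu}(-b)$. I therefore focus on \eqref{e:BSI+}. If $v_\mu > 0$ and $a \in [0, v_\mu)$, then $\mathcal{I}(a) = 0$ by \eqref{IJrel}, and \eqref{e:BSI+} is trivial: the upper half is $\log P \le 0$, while the lower half follows from the quenched SLLN of Proposition~\ref{prop:LLN}, since $P^\omega_0(Z_n > an) \to 1$ for $\mu$-a.e.\ $\omega$. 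The endpoint $a = v_\mu$ and the case $a = 0 = v_\mu$ are handled by continuity/monotonicity considerations. The substantive range is therefore $a \in (v_\mu \vee 0,\, 1)$, where $\mathcal{I}(a) > 0$ and $\mathcal{I}(a) = a \mathcal{J}(1/a)$.

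\textbf{Upper bound.} The inclusion $\{Z_n > an\} \subset \{H_m \le n\}$ with $m = \lceil an \rceil$, together with exponential Chebyshev at $\lambda \le 0$, gives
\begin{equation*}
\frac{1}{n} \log P^\omega_0(Z_n > an) \;\le\; -\lambda + \frac{m}{n}\cdot \frac{1}{m}\log E^\omega_0\!\left[e^{\lambda H_m}\right].
\end{equation*}
Proposition~\ref{L:ZT} applied with $m$ in place of $n$ shows that, for any fixed $\lambda \le 0$, the $\mu$-probability that the right-hand side exceeds $-\lambda + a\mathcal{J}^*(\lambda) + \tfrac{1}{2}\gep$ is bounded by $C e^{-c m^{1-\delta}} \le C' e^{-c' n^{1-\delta}}$. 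Choosing a finite grid $\lambda_1, \ldots, \lambda_K \in (-\infty, 0]$ whose minimum of $\{-\lambda_k + a\mathcal{J}^*(\lambda_k)\}$ approximates $\inf_{\lambda \le 0}\{-\lambda + a\mathcal{J}^*(\lambda)\} = -a\mathcal{J}(1/a) = -\mathcal{I}(a)$ within $\tfrac{1}{2}\gep$, and taking a union bound, yields the upper half of \eqref{e:BSI+} with the stretched-exponential rate preserved.

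\textbf{Lower bound and main obstacle.} The harder direction is to show $P^\omega_0(Z_n > an) \ge e^{-n(\mathcal{I}(a) + \gep)}$ off a $\mu$-small set. The plan is a quenched Cram\'er-type tilting of hitting times. Fix small $\eta > 0$ with $a + \eta < 1$, set $m' = \lceil (a+\eta)n \rceil$, and write $H_{m'} = \sum_{i=1}^{m'} \tau_i$ with $\tau_i = H_i - H_{i-1}$ independent under $P^\omega_0$. Choose $\lambda^* < 0$ so that $(\mathcal{J}^*)'(\lambda^*) = n/m' \approx 1/(a+\eta)$, and define the tilted law $\widehat{P}^\omega_0$ whose density with respect to $P^\omega_0$ is $e^{\lambda^* H_{m'}}/E^\omega_0[e^{\lambda^* H_{m'}}]$. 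The standard change-of-measure inequality gives
\begin{equation*}
P^\omega_0(H_{m'} \le n) \;\ge\; \tfrac{1}{2}\,E^\omega_0\!\left[e^{\lambda^* H_{m'}}\right] \cdot e^{-\lambda^*(n - \sqrt{n})} \cdot \widehat{P}^\omega_0\!\left(H_{m'} \in [n - \sqrt{n}, n]\right).
\end{equation*}
By Proposition~\ref{L:ZT} at $\lambda^*$, the deterministic prefactor is at least $\exp(n[(a+\eta)\mathcal{J}^*(\lambda^*) - \lambda^*] - o(n)) = \exp(-n\mathcal{I}(a+\eta) - o(n))$ off a $\mu$-small set, and continuity of $\mathcal{I}$ lets one take $\eta$ small enough that $\mathcal{I}(a+\eta) \le \mathcal{I}(a) + \tfrac{1}{2}\gep$. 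Finally, the implication $\{H_{m'} \le n\} \Rightarrow \{Z_n > an\}$ needs the walk not to drop from $m'$ to below $an$ during the remaining $n - H_{m'}$ steps, a bound obtained by a symmetric application of the upper-bound machinery in the shifted environment $\theta^{m'}\omega$ together with uniform ellipticity \eqref{UEllip}. The \emph{main obstacle} is verifying $\widehat{P}^\omega_0(H_{m'} \in [n - \sqrt{n}, n]) \ge 1/2$ at the quenched level with $\mu$-concentration: this reduces to a quenched variance estimate for $H_{m'}$ under the tilt, equivalently to concentration of $\partial_\lambda^2 \tfrac{1}{m'}\log E^\omega_0[e^{\lambda H_{m'}}]$ at $\lambda^*$. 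The latter can be extracted from Proposition~\ref{L:ZT} applied at three nearby values of $\lambda$ via a discrete Cauchy-type interpolation, but preserving the stretched-exponential rate through this step requires care.
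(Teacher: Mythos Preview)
Your reduction by reflection and the upper bound via exponential Chebyshev on $H_{\lceil an\rceil}$ are essentially the paper's argument (the paper simply picks the optimal $\theta$ rather than a grid, but that is cosmetic). The gap is in the lower bound.

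The window $[n-\sqrt{n},n]$ is too narrow for the tools available. To have $\widehat P^\omega_0(H_{m'}\in[n-\sqrt{n},n])$ bounded below you must place the tilted mean of $H_{m'}$ inside that window, i.e.\ pin down $\partial_\lambda\tfrac{1}{m'}\log E^\omega_0[e^{\lambda H_{m'}}]\big|_{\lambda^*}$ to within $O(m'^{-1/2})$ of $(\mathcal J^*)'(\lambda^*)$. Your finite-difference scheme cannot deliver this: Proposition~\ref{L:ZT} concentrates $\tfrac{1}{m'}\log E^\omega_0[e^{\lambda H_{m'}}]$ only to within a \emph{fixed} $\gep>0$, so values at $\lambda^*\pm h$ locate the derivative only to within $O(\gep/h)$, a constant independent of $n$. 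Letting $\gep\downarrow 0$ with $n$ would destroy the stretched-exponential rate. The paper sidesteps this by taking a \emph{macroscopic} window: from the elementary implication $n<H_x<n+dn\Rightarrow Z_n>x-dn$ with $x=\lceil un\rceil+\lfloor dn\rfloor$ one gets $P^\omega_0(Z_n\ge un)\ge P^\omega_0\bigl(H_m/m\in B_{\tilde d}(1/\tilde u)\bigr)$ for a ball of fixed radius $\tilde d>0$. After tilting by $\zeta=\mathcal J'(1/\tilde u)$, the two tails of the complement are handled by exponential Chebyshev at two further \emph{fixed} parameters $\zeta+\theta$ and $\zeta+\sigma$; the exposing-plane inequality for $\mathcal J$ at $1/\tilde u$ makes the resulting exponents $-d_1,-d_2$ strictly negative. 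Thus Proposition~\ref{L:ZT} is invoked only at three fixed values of $\lambda$, the tilted ball has mass $\ge\tfrac12$ on a good $\omega$-set of the required $\mu$-measure, and no derivative or variance control ever enters.

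A smaller point: for $a\in[0,v_\mu)$ the quenched SLLN gives only $\mu$-a.s.\ convergence of $\tfrac1n\log P^\omega_0(Z_n>an)$ to $0$, not the stretched-exponential concentration in \eqref{e:BSI+}. The clean reduction is monotonicity: choose $a'>v_\mu$ with $\mathcal I(a')<\gep/2$, use $P^\omega_0(Z_n>an)\ge P^\omega_0(Z_n>a'n)$, and apply the main lower bound at $a'$.
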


\subsection{Concentration: from half-lines to intervals}
\label{ss:halfblock}

In this section we prove that Lemma~\ref{L:Dt} follows from Lemma~\ref{L:*}.

\begin{proof}
We will split the proof for $\Delta = (a,b]$ in two cases. The proof for $\gD = [a,b]$ is similar.

\paragraph{Case $0 \leq a$.}

We start from the equation
\begin{equation}
P^\omega_0 \prt{\frac{Z_n}{n} \in \gD} = P^\omega_0 \prt{\frac{Z_n}{n} > a} - P^\omega_0 \prt{\frac{Z_n}{n} > b}.
\end{equation}
Define $e(\omega,u,n) = \frac{1}{n} \log P_0^\omega(\frac{Z_n}{n} > u) + \mc{I}(u)$. Since $\mc{I}(b) - \mc{I}(a) 
= \eta > 0$, we obtain
\begin{equation}
\begin{aligned}
\frac{P^\omega_0 \prt{\frac{Z_n}{n} > a}}{P^\omega_0 \prt{\frac{Z_n}{n} > b}} 
&= e^{n (\mc{I}(b) - \mc{I}(a)  + e(\omega,a,n) - e(\omega,b,n))} = e^{n (\eta + e(\omega,a,n) - e(\omega,b,n))}.
\end{aligned}
\end{equation}
When $\abs{e(\omega,a,n) - e(\omega,b,n)} < \tfrac12 \eta$, we have
\begin{equation} 
\label{e:eta2}
P^\omega_0 \prt{\frac{Z_n}{n} > b} \leq e^{- n \frac{\eta}{2}} \, P^\omega_0 \prt{\frac{Z_n}{n} > a}.
\end{equation}
For large enough $n$, as soon as $e^{- n \frac{\eta}{2}} < \tfrac{1}{2}$ we get
\begin{equation}
\tfrac{1}{2} P^\omega_0 \prt{\frac{Z_n}{n} > a} \leq P^\omega_0 \prt{\frac{Z_n}{n} \in \gD} 
\leq P^\omega_0 \prt{\frac{Z_n}{n} > a},
\end{equation}
which implies that
\begin{equation} 
\label{e:log2}
\abs{\frac{1}{n} \log P^\omega_0 \prt{\frac{Z_n}{n} \in \gD} 
- \frac{1}{n} \log P^\omega_0 \prt{\frac{Z_n}{n} > a}} < \frac{1}{n}\log 2.
\end{equation} 
Therefore
\begin{equation} \label{inter_a}
\begin{aligned}
&\mu \prt{\omega \colon \abs{\frac{1}{n} \log P_0^\omega \prt{\frac{Z_n}{n} \in \gD} 
- \frac{1}{n} \log P^\omega_0 \prt{\frac{Z_n}{n} > a}} > \frac{1}{n}\log 2}\\
&\leq \mu \prt{\omega \colon \abs{e(\omega,a,n) - e(\omega,b,n)} > \tfrac12\eta}\\[0.2cm]
&\leq \mu \prt{\omega \colon \abs{e(\omega,a,n)} > \tfrac14\eta} 
+ \mu \prt{\omega \colon \abs{e(\omega,b,n)} > \tfrac14\eta}.
\end{aligned}
\end{equation}
Since the concentration~\eqref{e:BSI+} in Lemma~\ref{L:*} bounds both terms in~\eqref{inter_a}, 
after we pick $n$ large enough so that $\frac{\log 2}{n}<\gep$, we obtain~\eqref{e:Dt+}.

\paragraph{Case $b\leq 0$.}

In this case we have the following equation:
\begin{equation}
P^\omega_0 \prt{\frac{Z_n}{n} \in \gD} 
= P^\omega_0 \prt{\frac{Z_n}{n} \leq b} - P^\omega_0 \prt{\frac{Z_n}{n} \leq a}.
\end{equation}
Similarly, we define $\tilde{e}(\omega,u,n) = \frac{1}{n} \log P^\omega_0(\frac{Z_n}{n} \leq u) 
+ \mc{I}(u)$.  Since $\mc{I}(a) - \mc{I}(b) = \eta > 0$, we obtain
\begin{equation}
\begin{aligned}
\frac{P^\omega_0 \prt{\frac{Z_n}{n} \leq b}}{P^\omega_0 \prt{\frac{Z_n}{n} \leq a}} 
= e^{n (\mc{I}(a) - \mc{I}(b) - \tilde{e}(\omega,a,n) + \tilde{e}(\omega,b,n))} 
= e^{n(\eta + \tilde{e}(\omega,a,n) - \tilde{e}(\omega,b,n))}.
\end{aligned}
\end{equation}
When $\abs{\tilde{e}(\omega,a,n) - \tilde{e}(\omega,b,n)} < \tfrac12\eta$,
\begin{equation}
P^\omega_0 \prt{\frac{Z_n}{n} \leq a} \leq e^{-n \frac{\eta}{2}} \, P^\omega_0 \prt{\frac{Z_n}{n} \leq b}.
\end{equation}
As we did in~\eqref{e:eta2}--\eqref{e:log2}, for large $n$ as soon as $e^{- n \frac{\eta}{2}} < \frac{1}{2}$
we conclude that
\begin{equation} \label{inter_b}
\begin{aligned}
&\mu \prt{\omega \colon \abs{\frac{1}{n} \log P_0^\omega \prt{\frac{Z_n}{n} \in \gD} 
- \frac{1}{n} \log P^\omega_0 \prt{\frac{Z_n}{n} \leq b}} > \tfrac{1}{n}\log 2}\\
&\leq \mu \prt{\omega \colon \abs{\tilde{e}(\omega,a,n) - \tilde{e}(\omega,b,n)} > \tfrac12\eta}\\[0.2cm]
&\leq \mu \prt{\omega \colon \abs{\tilde{e}(\omega,a,n)} > \tfrac14\eta} 
+ \mu \prt{\omega \colon \abs{\tilde{e}(\omega,b,n)} > \tfrac14\eta}.
\end{aligned}
\end{equation}
Since the concentration~\eqref{e:BSI-} in Lemma~\ref{L:*} bounds both terms in~\eqref{inter_b}, 
after we pick $n$ large enough so that $\frac{\log 2}{n}<\gep$, we obtain~\eqref{e:Dt-} and 
Lemma~\ref{L:Dt} follows.
\end{proof}

\subsection{Concentration: from hitting times to half-lines}
\label{ss:halfhit}

In this section we prove Lemma~\ref{L:*}.

\begin{proof}
Once we prove~\eqref{e:BSI+}, the proof of~\eqref{e:BSI-} follows from a reflection argument.
Indeed, let $\widetilde{\omega} = \prt{\widetilde{\omega}(x)}_{x \in \Z} = \prt{1 - \omega(x)}_{x \in \Z}$. 
For $u > 0$, $P^\omega_0(\frac{Z_n}{n} < - u) = P^{\widetilde{\omega}}_0(\frac{Z_n}{n} > u)$ and, 
denoting by $\mc{I}^\go$ the quenched rate function on $\go$, we get
\begin{equation}
\mc{I}^{\widetilde{\omega}}(u) = \mc{I}^\omega(- u).
\end{equation}
Therefore
\begin{equation}
\frac{1}{n} \log P^\omega_0 \prt{\frac{Z_n}{n} < - u} + \mc{I}^\omega(- u) 
= \frac{1}{n} \log P^{\widetilde{\omega}}_0 \prt{\frac{Z_n}{n} > u} + \mc{I}^{\widetilde{\omega}}(u)
\end{equation}
and
\begin{equation}
\begin{aligned}
&\mu \prt{\omega \colon \abs{\frac{1}{n} \log P^\omega_0 \prt{\frac{Z_n}{n} < - u} 
+ \mc{I}^\omega(- u)} > \gep}\\
&= \mu \prt{\omega \colon \abs{\frac{1}{n} \log P^{\widetilde{\omega}}_0 \prt{\frac{Z_n}{n} > u} 
+ \mc{I}^{\widetilde{\omega}}(u)} > \gep}\\
&= \tilde{\mu} \prt{\omega \colon \abs{\frac{1}{n} \log P^\omega_0 \prt{\frac{Z_n}{n} > u} 
+ \mc{I}^\omega(u)} > \gep},\\
\end{aligned}
\end{equation}
where $\tilde{\mu}\crt{\omega \in A} = \mu\crt{\widetilde{\omega}\in A}$ satisfies the conditions of 
Lemma~\ref{L:*}. After proving~\eqref{e:BSI+} for $\mu$, we obtain~\eqref{e:BSI+} for $\tilde{\mu}$, 
which is equivalent to the proof of~\eqref{e:BSI-} for $\mu$. 
 
To prove~\eqref{e:BSI+}, we derive upper and lower bounds for $\frac{1}{n} \log P^{\omega}_0
(\frac{Z_n}{n} \geq u) + \mc{I}(u)$.

\paragraph{Upper bound.}

To bound the probabilities on the displacements we can use the hitting times. For $u > 0$,
\begin{equation}
P^\omega_0 \prt{\frac{Z_n}{n} \geq u} \leq P^\omega_0 \prt{Z_n \geq \piso{u n}} 
\leq P^{\omega}_0 \prt{H_{\piso{u n}} \leq n}.
\end{equation}
By the Markov inequality, for $\gt < 0$,
\begin{equation}
P^\omega_0 \prt{H_{\piso{u n}}\leq n} \leq e^{- \gt n} E^\go_0 \crt{e^{\gt H_{\piso{u n}}}},
\end{equation}
and so
\begin{equation}
\begin{aligned}
\frac{1}{n} \log P_0 \prt{\frac{Z_n}{n} \geq u} 
&\leq - \gt + \frac{1}{n} \log E^\go_0 \crt{e^{\gt H_{\piso{u n}}}}\\
&= - \gt + \frac{1}{n} \log E^\go_0 \crt{e^{\gt H_{\piso{u n}}}} + u \mc{J}^*(\gt) - u \mc{J}^*(\gt)\\
&= - u \prt{\gt \frac{1}{u} + O(\go,u n,\gt) - \mc{J}^*(\gt)},
\end{aligned}
\end{equation}
where
\begin{equation}
O(\go,u n, \gt) = \mc{J}^*(\gt) - \frac{1}{u n} \log E^\go_0 \crt{e^{\gt H_{\piso{u n}}}}.
\end{equation}
Taking $\gt < 0$ such that $\gt \frac{1}{u} - \mc{J}^*(\gt) = \mc{J}(\frac{1}{u})$ (see Fig.~\ref{fig:theta}) 
and using that $u \mc{J}(\frac{1}{u}) = \mc{I}(u)$, we get
\begin{equation}
\frac{1}{n} \log P^\go_0 \prt{\frac{Z_n}{n} \geq u} + \mc{I}(u) \leq - u \, O(\go,u n,\gt).
\end{equation}

\begin{figure}[htbp]
\vspace{-.5cm}
\begin{center}
\includegraphics[clip, trim=3cm 10cm 3cm 12cm, width=0.6\textwidth]{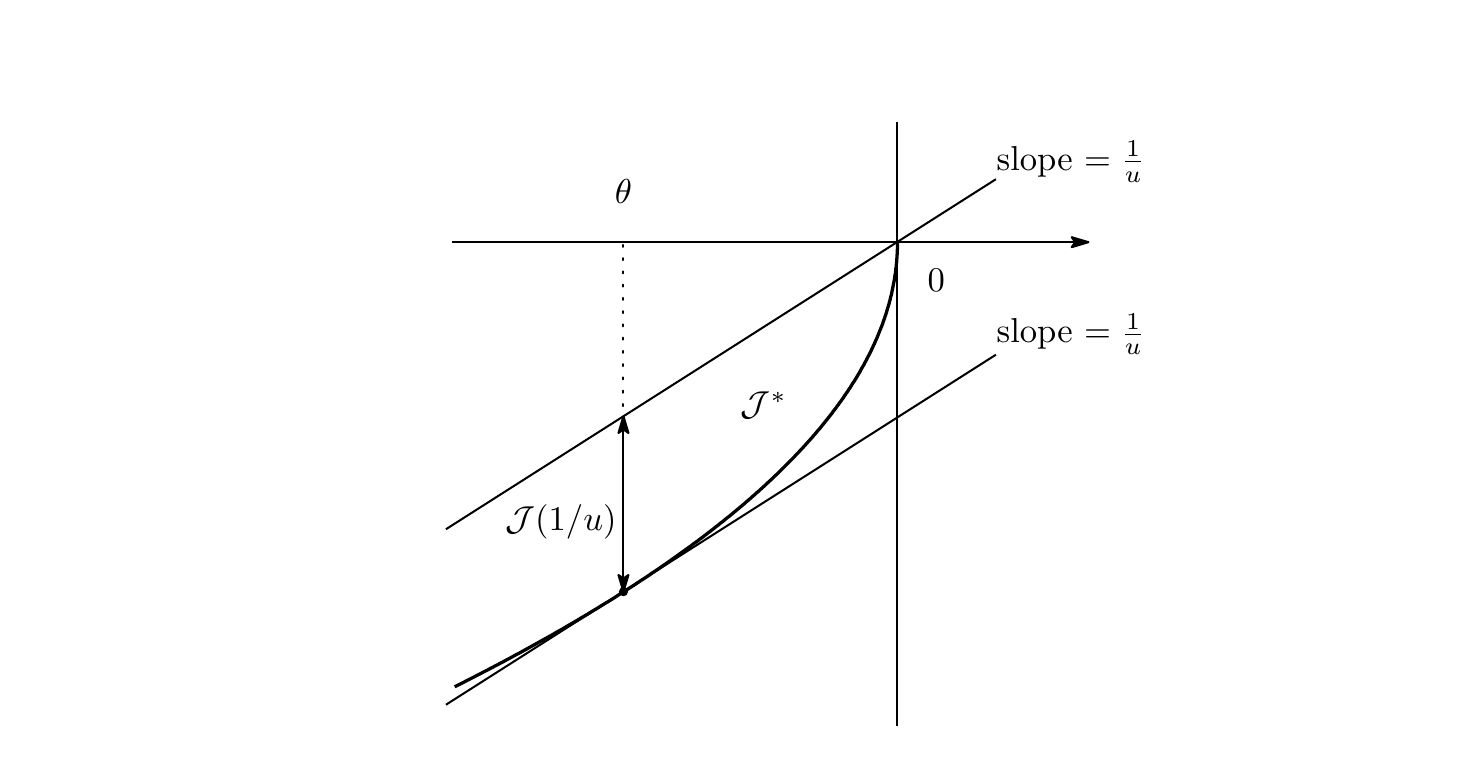}
\caption{The function $x\mapsto \frac{1}{u} x - \mc{J}^*(x)$ attains its maximum at $\theta$.}
\label{fig:theta}
\end{center}
\end{figure}
Therefore, using Proposition~\ref{L:ZT}, we arrive at
\begin{equation} 
\label{e:UB*}
\mu \prt{\go \colon \frac{1}{n} \log P^\go_0 \prt{\frac{Z_n}{n} \geq u} + \mc{I}(u) >\gep} 
\leq \mu \prt{\go \colon \abs{O(\go,u n,\gt)} > \gep} < C e^{- c n^{1-\delta}}.
\end{equation}

\paragraph{Lower bound.} 

The lower bound for $P^\omega_0\left(\frac{Z_n}{n} \geq u\right)$ is more subtle. Note that, since the steps 
of the random walk are either $+ 1$ or $- 1$, for $d > 0$, we have 
\begin{equation}
n < H_x < n + dn \quad \Longrightarrow \quad Z_n > x - dn.
\end{equation}
Therefore,
\begin{equation}
P^{\omega}_0 \prt{\frac{Z_n}{n} \geq u} \geq P^\omega_0 \prt{Z_n \geq \teto{u n}} 
\geq P^\omega_0 \prt{n \leq H_{\teto{u n} + \piso{d n}} \leq n + \piso{d n}}.
\end{equation}
Now, let $m = \teto{u n} + \piso{d n}$. Note that
\begin{align*}
\frac{1}{u + d + r_n} \leq \frac{H_m}{m} \leq \frac{1 + d + \tilde{r}_n}{u + d + r_n}
\end{align*}
with $r_n, \tilde{r}_n \to 0$ as $n \to 0$. Let $\tilde{d}$ and $\tilde{u}$ be such that 
\begin{equation}
\frac{1}{u + d} < \frac{1}{\tilde{u}} - \tilde{d} < \frac{1}{\tilde{u}} - \tilde{d} < \frac{1 + d}{u + d}.
\end{equation}
Letting $B_{\tilde{d}}(1/\tilde{u})$ denote the ball with center $\frac{1}{\tilde{u}}$ and radius 
$\tilde{d}$, we have, for $n$ large enough,
\begin{equation} \label{lhalfball}
\frac{1}{n} \log P^\omega_0 \prt{\frac{Z_n}{n} \geq u} 
\geq \frac{1}{n} \log P^{\omega}_0 \prt{\frac{H_m}{m} \in B_{\tilde{d}}(1/\tilde{u})}.
\end{equation}
If $d \to 0$, then $\abs{\tilde{u} - u} \to 0$ and $\tilde{d} \to 0$. Note that $E^\go_0[e^{\zeta H_m}] 
< \infty$ for $\zeta < 0$. We define the $\zeta$-tilted probability measure
\begin{equation}
\frac{\dd P^{\omega,\zeta,m}_{0}}{\dd P^{\omega,m}_0}(y) 
= \frac{e^{m \zeta y}}{E^\go_0 \crt{e^{\zeta H_m}}}, \qquad 
P^{\omega,m}_{0}(\cdot) = P^\omega_0 \prt{\frac{H_m}{m} \in \cdot}.
\end{equation}
Recalling that $E^\go_0 \crt{e^{\zeta H_m}} = \int e^{m \zeta y} \dd P^{\omega,m}_{0}(y)$, we compute
\begin{equation} 
\label{e:tilt}
\begin{aligned}
\frac{1}{n} \log P^{\omega}_0 \prt{\frac{H_m}{m} \in B_{\tilde{d}}(1/\tilde{u})} 
&= \frac{1}{n} \log \int_{B_{\tilde{d}}(1/\tilde{u})} \dd P^{\omega,m}_{0}(y)\\
&= \frac{1}{n} \log \int_{B_{\tilde{d}}(1/\tilde{u})} 
\frac{E^\go_0 \crt{e^{\zeta H_m}}}{e^{m \zeta y}} \dd P^{\omega,\zeta,m}_{0}(y).
\end{aligned}
\end{equation}
Now, since $\zeta < 0$,
\begin{equation}
y \in B_{\tilde{d}}(1/\tilde{u}) \quad \Longrightarrow \quad 
e^{- m \zeta y} \geq e^{-m \zeta \prt{\frac{1}{\tilde{u}} - \tilde{d}}}.
\end{equation}
Inserting this into~\eqref{e:tilt} and replacing $\frac{1}{n}\log E^\go_0 \crt{e^{\zeta H_m}}$ by 
$- \frac{m}{n} [O(\go,m,\zeta) -  \mc{J}^*(\zeta)]$, yields:
\begin{equation}
\begin{aligned} \label{balltilt}
&\frac{1}{n} \log P^\omega_0 \prt{\frac{H_m}{m} \in B_{\tilde{d}}(1/\tilde{u})}\\
&\geq \frac{1}{n} \log E^\go_0 \crt{e^{\zeta H_m}} - \frac{m}{n} \zeta \prt{\frac{1}{\tilde{u}} 
- \tilde{d}} + \frac{1}{n} \log P^{\omega,\zeta,m}_{0}(B_{\tilde{d}}(1/\tilde{u}))\\
&= - \hat{u}_n \crt{ \prt{\zeta \frac{1}{\hat{u}_n} - \mc{J}^*(\zeta)} + O(\go,m,\zeta)} 
- \hat{d} \zeta + \frac{1}{n} \log P^{\omega,\zeta,m}_{0} (B_{\tilde{d}}(1/\tilde{u})),
\end{aligned}
\end{equation}
where $\hat{u}_n =  \frac{m}{n}$ and $\hat{d}$ is defined by the relation $\frac{m}{n} 
\prt{\frac{1}{\tilde{u}} - \tilde{d}}  = 1 + \hat{d}$. Note that
\begin{equation} \label{e:convd}
 \hat{u}_n \to u, \hat{d} \to 0 \quad \text { as }\quad d \to 0, n \to \infty
\end{equation}
Since $\zeta \frac{1}{\hat{u}_n} - \mc{J}^*(\zeta) \leq \mc{J}(\frac{1}{\hat{u}_n})$, 
combining~\eqref{balltilt} with~\eqref{lhalfball} we obtain
\begin{equation}
\begin{aligned}
\frac{1}{n} \log P^\go_0 \prt{\frac{Z_n}{n} \geq u} + \mc{I}(u) 
&\geq \mc{I}(u) - \mc{I}(\hat{u}_n) - \hat{u}_n \, O(\go,m,\zeta) -\hat{d} \zeta\\
&\qquad + \frac{1}{n} \log P^{\omega,\zeta,m}_{0} (B_{\tilde{d}}(1/\tilde{u})).
\end{aligned}
\end{equation}
Therefore, taking $d$ small enough and $n$ large enough so that
\begin{equation}\label{e:halfI}
  \abs{\mc{I}(u) - \mc{I}(\hat{u}_n)}   + |\hat{d} \zeta| < \tfrac12\gep,
\end{equation}
we get
\begin{equation} 
\label{last1}
\begin{aligned}
&\mu \prt{\go \colon \frac{1}{n} \log P^\go_0 \prt{\frac{Z_n}{n} \geq u} + \mc{I}(u) < - \gep}\\
&\leq \mu \prt{\go \colon -\hat{u}_n \, O(\go,m,\zeta) 
+ \frac{1}{n} \log P^{\omega,\zeta,m}_0 (B_{\tilde{d}}(1/\tilde{u})) < - \tfrac12\gep}\\
&\leq \mu \prt{\go \colon -\hat{u}_n \, O(\go,m,\zeta)  <- \tfrac14\gep} 
+ \mu \prt{\go \colon \frac{1}{n} \log P^{\omega,\zeta,m}_0 (B_{\tilde{d}}(1/\tilde{u})) 
< - \tfrac14\gep}.
\end{aligned}
\end{equation}
From Proposition~\ref{L:ZT} and the fact that $0<\hat{u}_n \leq 1$, it follows that
\begin{equation} 
\label{last2}
\mu \prt{\go \colon \hat{u}_n \, \abs{O(\go,m,\zeta)} > \tfrac14 \gep} 
\leq C \, e^{- c n^{1-\delta}}.
\end{equation}
It therefore remains to prove that
\begin{equation}
\mu \prt{\go \colon \frac{1}{n} \log P^{\omega,\zeta,m}_0 (B_{\tilde{c}}(1/\tilde{u})) < - \tfrac14\gep} 
\leq C \, e^{- c n^{1-\delta}}.
\end{equation}

Let $E^{\go,\zeta,m}_0[f(Y)]$ be
expectation of $f$ with respect to $P^{\omega,\zeta,m}_0(dY)$  and $E^{\omega,m}_0 [f(Y)]$ be
expectation of $f$ with respect to $P^{\omega,m}_0(dY)$. Then
\begin{equation}
E^{\go,\zeta,m}_0 \crt{e^{m \gt Y}} 
= \frac{E^{\go,m}_0 \crt{e^{m (\gt + \zeta) Y}}}{E^{\go,m}_0 \crt{e^{m \zeta Y}}} 
= \frac{E^{\go}_0 \crt{e^{(\gt + \zeta) H_m}}}{E^\go_0 \crt{e^{\zeta H_m}}}.
\end{equation}
We have
\begin{equation}
\begin{aligned}
P^{\omega,\zeta,m}_0 (B_{\tilde{d}}(1/\tilde{u})^c) 
&= \underbrace{P^{\omega,\zeta,m}_0 \prt{Y > \frac{1}{\tilde{u}} + \tilde{d}}} 
+ \underbrace{P^{\omega,\zeta,m}_0 \prt{Y < \frac{1}{\tilde{u}} - \tilde{d}}}\\
&= \qquad\qquad I \qquad \qquad \;\;\,+ \qquad\quad II.
\end{aligned}
\end{equation}

\paragraph{Bound for $I$.}

For $\gt > 0$,
\begin{equation}
\begin{aligned}
&P^{\omega,\zeta,m}_0 \prt{Y > \frac{1}{\tilde{u}} + \tilde{d}} 
\leq e^{- m \gt (\frac{1}{\tilde{u}} + \tilde{d})} E^{\go,\zeta,m}_0 \crt{e^{\gt Y}}\\
&= \exp \crl{m \crt{- \gt \left(\frac{1}{\tilde{u}} + \tilde{d}\right) 
+ \frac{1}{m} \prt{\log E^\go_0 \crt{e^{ (\gt + \zeta) H_m}} 
- \log E^\go_0 \crt{e^{\zeta H_m}}}}}\\
&\leq \exp \crl{m \crt{- \gt \left(\frac{1}{\tilde{u}} + \tilde{d}\right) + \mc{J}^*(\gt + \zeta) 
- \mc{J}^*(\zeta) - O(\go,m,\gt+\zeta) + O(\go,m,\zeta)}}.
\end{aligned}
\end{equation}
By the strict convexity of $\mc{J}$ at $\frac{1}{\tilde{u}} > 0$, we can pick $\zeta = \mc{J}'(\frac{1}{\tilde{u}}) 
< 0$ an exposing plane, i.e., $\zeta$ is such that for any $y \neq \frac{1}{\tilde{u}}$,
\begin{equation} 
\label{eq:exposing}
\mc{J}(y)- \mc{J}\left(\frac{1}{\tilde{u}}\right) > \prt{y - \frac{1}{\tilde{u}}} \zeta.
\end{equation}
By the strict convexity of $\mc{J}$ and the fact that $\frac{1}{\tilde{u}} + \tilde{d} > \frac{1}{\tilde{u}}$, 
we can pick $\gt > 0$ (see Fig.~\ref{fig:exposing_J_J*}) such that
\begin{equation} 
\label{e:theta}
(\gt + \zeta) \left(\frac{1}{\tilde{u}}+ \tilde{d}\right) - \mc{J}\left(\frac{1}{\tilde{u}} + \tilde{d}\right) 
= \mc{J}^*(\gt + \zeta).
\end{equation}

\begin{figure}[htbp]
\begin{center}
\includegraphics[clip, trim=2.8cm 10.5cm 2cm 12.5cm, width=0.8\textwidth]{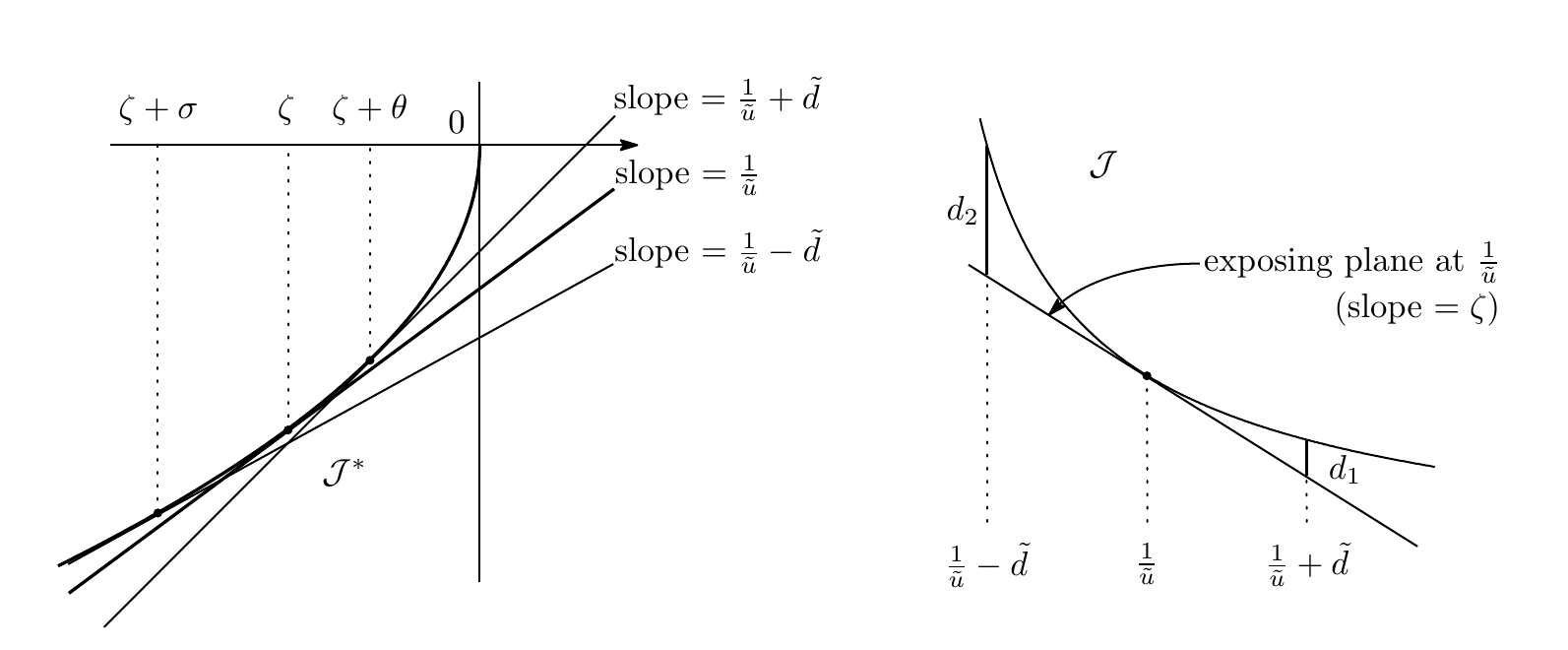}
\vspace{0.2cm}
\caption{\emph{Left}: The strict convexity of $\mc{J}^*$ shows that for a line of slope $1/\tilde{u} 
+ \tilde{d}$ the maximum is attained at $\zeta + \theta>\zeta$, while for a line of slope 
$1/\tilde{u} - \tilde{d}$ the maximum is attained at $\zeta + \sigma<\zeta$ (see~\eqref{e:theta} 
and~\eqref{e:sigma}, respectively).  \emph{Right}: The exposing plane condition shows that 
$d_1$ in~\eqref{e:expplus} and $d_2$ in~\eqref{e:expminus} are strictly positive.}
\label{fig:exposing_J_J*}
\end{center}
\end{figure}

By~\eqref{eq:exposing} with $y = \frac{1}{\tilde{u}} + \tilde{d}$, we find that
\begin{equation}
\begin{aligned}\label{e:expplus}
&- \gt \left(\frac{1}{\tilde{u}} + \tilde{d}\right) + \mc{J}^*(\gt + \zeta) - \mc{J}^*(\zeta)\\
&= - (\gt + \zeta) \left(\frac{1}{\tilde{u}} + \tilde{d}\right) + \mc{J}^*(\gt + \zeta) - \mc{J}^*(\zeta) 
+ \zeta \left(\frac{1}{\tilde{u}} + \tilde{d}\right)\\
&= - \mc{J}\left(\frac{1}{\tilde{u}} + \tilde{d}\right) + \zeta \left(\frac{1}{\tilde{u}} + \tilde{d}\right) - \mc{J}^*(\zeta)\\
&\leq -\mc{J}\left(\frac{1}{\tilde{u}} + \tilde{d}\right) + \zeta \left(\frac{1}{\tilde{u}} + \tilde{d}\right) 
- \zeta \frac{1}{\tilde{u}} + \mc{J}\left(\frac{1}{\tilde{u}}\right) = - d_1 < 0
\end{aligned}
\end{equation}
(see Fig.~\ref{fig:exposing_J_J*}).  On the set $A_I = \chv{\omega \colon \abs{O(\go,m,\gt+\zeta) 
- O(\go,m,\zeta)} < \tfrac12 \abs{d_1}}$, we have
\begin{equation} 
\label{e:BI}
P^{\omega,\zeta,m}_0 \prt{Y > \frac{1}{\tilde{u}} + \tilde{d}} < e^{- m \frac{\abs{d_1}}{2}} \to 0.
\end{equation}

\paragraph{Bound for $II$.}

Again, for $\gs < 0$,
\begin{equation}
\begin{aligned}
&P^{\omega,\zeta,m}_0 \prt{Y < \frac{1}{\tilde{u}} - \tilde{d}} \leq e^{- m \gs (\frac{1}{\tilde{u}} - \tilde{d})} 
\, E^{\go,\zeta,m}_0 \crt{e^{\gs Y}}\\
&= \exp \crl{m \crt{- \gs \left(\frac{1}{\tilde{u}} - \tilde{d}\right) + \frac{1}{m} 
\prt{\log E^\go_0 \crt{e^{(\gs + \zeta) H_m}} - \log E^\go_0 \crt{e^{\zeta H_m}}}}}\\
&\leq \exp \crl{m \crt{- \gs \left(\frac{1}{\tilde{u}} - \tilde{d}\right) + \mc{J}^*(\gs + \zeta) - \mc{J}^*(\zeta) 
- O(\go,m,\gs+\zeta) + O(\go,m,\zeta)}}.
\end{aligned}
\end{equation}
By the strict convexity of $\mc{J}$ and the fact that $\frac{1}{\tilde{u}} - \tilde{d} < \frac{1}{\tilde{u}}$, 
we can pick $\gs < 0$ (see Fig~\ref{fig:exposing_J_J*}) such that
\begin{equation} \label{e:sigma}
(\gs + \zeta) \left(\frac{1}{\tilde{u}} - \tilde{d}\right) - \mc{J}\left(\frac{1}{\tilde{u}} - \tilde{c}\right)
 = \mc{J}^*(\gs + \zeta).
\end{equation}
Similarly to~\eqref{e:expplus}, using~\eqref{eq:exposing} with $y = \frac{1}{\tilde{u}} - \tilde{d}$, we obtain
\begin{equation}\label{e:expminus}
\begin{aligned}
&- \gs \left(\frac{1}{\tilde{u}} - \tilde{d}\right) + \mc{J}^*(\gs + \zeta) - \mc{J}^*(\zeta) = - d_2 < 0,
\end{aligned}
\end{equation}
see Fig~\ref{fig:exposing_J_J*}.  For $\omega \in A_{II} = \chv{\omega \colon \abs{O(\go,m,\gs+\zeta) 
- O(\go,m,\zeta)} < \frac12\abs{d_2}}$
\begin{equation} 
\label{e:BII}
P^{\omega,\zeta,m}_0 \prt{Y < \frac{1}{\tilde{u}} - \tilde{d}} < e^{- m \frac12 \abs{d_2}} \to 0.
\end{equation}

\paragraph{Conclusion.}

For $n$ large enough, using~\eqref{e:BI} and~\eqref{e:BII} we see that
\begin{equation}
\omega \in A_I \cap A_{II} \quad \Longrightarrow \quad 
P^{\omega,\zeta,m}_0 (B_{\tilde{d}}(1/\tilde{u})^c) < \tfrac12 
\quad \Longrightarrow \quad 
P^{\omega,\zeta,m}_0 (B_{\tilde{d}}(1/\tilde{u})) \geq \tfrac12,
\end{equation}
and therefore, for large $n$, we conclude that
\begin{equation}
\abs{\frac{1}{n} \log P^{\omega,\zeta,m}_0 (B_{\tilde{d}}(1/\tilde{u}))} 
< \frac{1}{n}\log 2 < \tfrac14\gep.
\end{equation}
Hence
\begin{equation}
\begin{aligned}
&\mu \prt{\go \colon \frac{1}{n} \log P^{\omega,\zeta,m}_0 (B_{\tilde{d}}(1/\tilde{u})) 
< - \tfrac14\gep} \leq \mu \prt{A_I^c} + \mu \prt{A_{II}^c}.
\end{aligned}
\end{equation}
To complete the proof, note that Proposition~\ref{L:ZT} implies
\begin{equation}
\label{final}
\mu\prt{A_I^c} + \mu \prt{A_{II}^c} \leq C \, e^{- c n^{1-\delta}}.
\end{equation}
It is worth mentioning that the constant in~\eqref{final} does not depends on $n$. For fixed $u$ 
we choose $\gep>0$, and ~\eqref{e:halfI} together with~\eqref{e:convd} gives us $d,\tilde{d}$ 
and $\tilde{u}$.  After that, $d_1,d_2$ are given by the exposing plane conditions at the boundary 
of the ball of radius $\tilde{d}$ centered at $\frac{1}{\tilde{u}}$ (see~\eqref{e:expplus} and~\eqref{e:expminus}). 
Thus, even though the constant in~\eqref{final} depends on $d_1,d_2$, the latter are functions of 
$u$ and $\gep$ only, and not of $n$. The latter estimate, together with the bounds in~\eqref{last1}
and~\eqref{last2}, yield
\begin{equation} 
\label{e:LB*}
\mu \prt{\go \colon \frac{1}{n} \log P^\go_0 \prt{\frac{Z_n}{n} \geq u} 
+ \mc{I}(u) <- \gep} < C \, e^{- c n^{1-2\delta}}.
\end{equation}
Therefore~\eqref{e:BSI+} in Lemma~\ref{L:*} follows from~\eqref{e:UB*} and~\eqref{e:LB*}.
\end{proof}

\section{Proofs of SLLN and LDP} \label{SLLDP}

\subsection{Proof of Theorem~\ref{SLLN}}

\begin{proof}
We will prove the SLLN under the annealed law.  After that we get Theorem~\ref{SLLN} by 
noting that, for any event $A$, if $P^{\mu,\tau}_0(A)  = 1$, then, for $\mu^{\bb{N}}$-a.e.\ $\Omega$, 
$P^{\Omega,\tau}_0(A) = 1$, and by taking $A = \chv{\lim_{n\to\infty} \frac{X_n}{n} = v_\mu}$ we get
the claim.

To prove the SLLN under the annealed law, we will use Theorem~\ref{CEThm}. To this aim, let 
$\bb{P}$ be the joint law of doubly indexed variables  $\psi^{(k)}_n$ that are pair-wise independent 
in $k$ and such that, for each $k$, $\psi^{(k)}_n$ has law $P^\mu_0(Z_n=\cdot)$. From~\eqref{equiv_distr} 
we see that $\psi^{(k)}_{T_k}$ is distributed as $Y_k$ and $\psi^{\left(\ell(n)\right)}_{\bar{T}^n}$ is 
distributed as $\bar{Y}^n$.

Assumptions $(A1)$ and $(A2)$ are trivially satisfied. It remains to check $(A3)$ with $L = v_\mu$, 
for which we use the annealed large deviation estimates for RWRE. In fact, from Proposition~\ref{annLDP} 
we get
\begin{equation}
\label{e:expdecay}
\limsup_{n\to \infty} \frac{1}{n} \log P^\mu_0\prt{ \abs{Z_n/n - v_\mu}\geq\gep}
 \leq -\mc{I}(v_\mu +\gep)\vee -\mc{I}(v_\mu-\gep).
\end{equation}
In the zero-speed case, since $-\mc{I}(v_\mu +\gep) \vee -\mc{I}(v_\mu -\gep)<0$, the speed of decay is 
exponential in $n$ and $(A3)$ holds. In the positive-speed case, $\mc{I}(v_\mu -\gep) = 0$ and the 
bound in~\eqref{e:expdecay} is not useful. However, Proposition~\ref{flatannLDbound} yields 
the refined bound in the flat piece:
\begin{equation}
\label{e:poldecay}
 \limsup_{n\to \infty} \frac{1}{\log n} \log P^\mu_0\prt{ \abs{Z_n/n } <v_\mu -\gep} \leq 1-s.
\end{equation}
This speed of decay is polynomial in $n$, and hence $(A3)$ still holds for $\gd \in(0, s - 1)$.
\end{proof}

\subsection{Proof of Theorem~\ref{thm:LDP}}\label{LDPproof}

\begin{proof}
We start by showing that, for $\mu^{\bb{N}}$-a.e.\ $\Omega$, $P^{\Omega,\tau}(X_n/n\in \cdot)$ 
satisfies the LDP with rate $n$ and with some convex rate function, which we denote by 
$\mc{I}_{\mu,\tau}$. This follows from Campos \emph{et al.}~\cite[Theorem 1.2]{CDRRS13}, 
which states that any uniformly elliptic nearest-neighbour random walk in a dynamic random 
environment on $\Z$, for which the space-time-shift operator is ergodic, satisfies a quenched 
LDP with rate $n$ and with a convex rate function. In our case, the uniform ellipticity assumption 
is given in~\eqref{UEllip}. For the above mentioned ergodicity we note that, in view of the i.i.d.\
property of $\mu$, it suffices to establish ergodicity of the time-shift operator. The latter is  
true because the time-shift operator is actually strongly mixing w.r.t.\ to the law induced on the 
space of dynamic environments. In fact, translations of cylinder events over time eventually 
become separated by some resampling time, and therefore they are independent.

Having established the existence of a convex rate function $\mc{I}_{\mu,\tau}$, we can invoke 
Varadhan's Lemma (see e.g.~\cite[Chaper III]{dH00}) to guarantee that
\begin{equation} 
\label{CGF-cQRF}
\exists\lim_{n\to\infty} \frac{1}{n} \log E^{\gO,\tau}_0 \crt{e^{\gl X_n}} 
= \sup_{x \in \R} \big[\gl x - \mc{I}_{\mu,\tau}(x)\big]
\qquad \forall\, \lambda \in \R.
\end{equation}
On the other hand, we next show by means of Theorem~\ref{CEThm} that 
\begin{equation} 
\label{2Leg}
\lim_{n\to\infty} \frac{1}{n} \log E^{\gO,\tau}_0 \crt{e^{\gl X_n}} =\sup_{x \in \R} \big[\gl x - \mc{I}(x)\big]
\qquad \forall\, \lambda \in \R,
\end{equation}
with $\mc{I}$ from Proposition~\ref{staticLDPque}. Hence, combining~\eqref{CGF-cQRF} and~\eqref{2Leg}, 
and using the convexity of the rate functions involved, we obtain the identity $\mc{I}_{\mu,\tau}= \mc{I}$, 
which concludes the proof.

Let us finally check that indeed~\eqref{2Leg} holds. Note first that
\begin{equation}
\frac{1}{n} \log E^{\gO,\tau}_0 \crt{e^{\gl X_n}} 
= \frac{1}{n} \sum_{k = 1}^{\ell(n)-1} \log E^{\go_k}_0 \crt{e^{\gl Z_{T_k}}} 
+ \frac{1}{n} \log E^{\go_{\ell(n)}}_0 \crt{e^{\gl Z_{\bar{T}^n}}}.
\end{equation}
Now, let $\bb{P}$ be the law induced by the doubly indexed variables $\psi^{(k)}_n(\Omega) :=\log E^{\go_k}_0 
\crt{e^{\gl Z_{n}}}$ under $\mu^{\bb{N}}$. Then 
\begin{equation}
\psi^{(k)}_{T_k} = \log E^{\go_k}_0 \crt{e^{\gl Z_{T_k}}}, \qquad
\psi^{\left(\ell(n)\right)}_{\bar{T}^n} = \log E^\go_0 \crt{e^{\gl Z_{\bar{T}^n}}}.
\end{equation}
Again, Assumptions $(A1)$ and $(A2)$ of Theorem~\ref{CEThm} are readily satisfied, and it remains 
to check (A3) with $L = \mc{I}^*(\lambda)$. Finally, Theorem~\ref{P:MB} yields the latter assumption 
and gives the desired result.
\end{proof}


\end{document}